\newtheorem{thm}{Theorem}[section]
\newtheorem{lem}{Lemma}[section]
\theoremstyle{definition}
\newtheorem{defn}{Definition}[section]
\theoremstyle{remark}
\newtheorem{rem}{Remark}[section]
\numberwithin{equation}{section}
\newcommand{\bu}{\mathbf{u}}
\newcommand{\bmf}[1]{{\mathbf{#1}}}
\def\bsi{{\mathrm{i}}}
\newcommand{\bx}{\mathbf{x}}
\newcommand{\ct}{{T}}
\newcommand{\rmd}{\mathrm{d}}
\title[Unique determination for an inverse elastic problem]{Unique determination by a single far-field measurement for an inverse elastic problem}
\author{Huaian Diao}
\address{School of Mathematics and Key Laboratory of Symbolic Computation and Knowledge Engineering of Ministry of Education, Jilin University, Changchun, Jilin, China.}
\email{diao@jlu.edu.cn; hadiao@gmail.com}
\author{Ruixiang Tang}
\address{School of Mathematics, Jilin University,
	Changchun, Jilin 130012, China.}
\email{ruixiangtang97@163.com; tangrx23@mails.jlu.edu.cn}
\author{Hongyu Liu} \address{Department of Mathematics, City University of Hong Kong, Kowloon, Hong Kong SAR, China.} \email{hongyu.liuip@gmail.com; hongyliu@cityu.edu.hk}
\author{Jiexin Tang}
\address{The No. 3 Junior High School of Yantai Economic and Technological Development Zone, No. 1 Changzhou Street,
	Yantai, Shandong 265503, China.}
\email{jiexintang@foxmail.com}
\date{} 
\begin{document}
	\maketitle
	
	\begin{abstract}

		This paper is concerned with the unique identification of the shape of a scatterer through a single far-field pattern in an inverse elastic medium scattering problem with a generalized transmission boundary condition. The uniqueness issue by a single far-field measurement is a challenging problem in inverse scattering theory, which has a long and colorful history. In this paper,  we demonstrate the well-posedness of the direct problem by the variational approach. We establish the uniqueness results by a single far-field measurement under a generic scenario when dealing with underlying elastic scatterers exhibiting polygonal-nest or polygonal-cell structures. Furthermore, for a polygonal-nest or polygonal-cell structure scatterer associated with density and boundary impedance parameters as piecewise constants, we show that these physical quantities can be uniquely determined simultaneously by a single far-field measurement. The corresponding proof relies heavily on examining the singular behaviour of a coupled PDE system near a corner in a microlocal manner. 			
		
		\medskip
		
		\noindent{\bf Keywords:}~~ inverse elastic scattering, generalized transmission boundary condition, single far-field measurement, polygonal-nest and polygonal-cell structure, corner singularity, unique identifiability 
		
		\medskip
		
		\noindent{\bf 2020 Mathematics Subject Classification:}~~35R30; 74J20; 86A22		
	\end{abstract}

	\section{Introduction}
	
	\subsection{Mathematical setup}
	
	Initially, we will focus on the mathematical model that will be used in our subsequent study. Consider a time-harmonic elastic plane wave, denoted by $\bmf{u}^i$, that has a time variation of the form $e^{-{\rm i}\omega t}$, where $\omega \in \mathbb R_+$ is a fixed angular frequency. The homogeneous and isotropic background $\mathbb{R}^2$ is characterized by Lam\'e constants, $\lambda$ and $\mu$, where the mass density of the background is normalized to 1.   Here, Lam\'e's first parameter is denoted as $\lambda$ and the shear modulus as $\mu$, which satisfy the following strong convexity condition 
	\begin{equation}\label{eq:conv} 
		\mu> 0 \ \mbox{and} \ \lambda+\mu> 0.
	\end{equation}
Throughout this paper, the compressional and shear wave numbers of the homogeneous and isotropic background are defined as follows: 
	\begin{equation}\label{k_ps}
		k_p=\frac{\omega }{\sqrt{2\mu+\lambda}},\quad k_s=\frac{\omega }{\sqrt{\mu}}.
	\end{equation}
In this paper we assume that the time-harmonic elastic plane wave $\bmf{u}^i$ is a superpositions of incident compressional  and shear waves given by
\begin{align}\label{eq:ui}
	\bmf{u}^i=c_1 \mathbf d e^{{\rm i } k_p \mathbf d \cdot \mathbf x} +c_2 \mathbf d^\perp e^{{\rm i } k_s \mathbf d \cdot \mathbf x},
\end{align}
where $c_1$, $c_2$ are two complex numbers satisfying $(c_1,c_2) \neq (0,0)$, $\mathbf d, \mathbf d^\perp \in \mathbb S^1$ with $\mathbf d \cdot \mathbf d^\perp =0 $. In particular, our focus lies on a fixed incident elastic plane wave $\bmf{u}^i$, which means that $c_1$, $c_2$, $\mathbf d$, and $\mathbf d^\perp$ are all fixed as specified in \eqref{eq:ui}. It can be verified that $\bmf{u}^i$ is an entire solution to the Navier equation described by \begin{equation}\label{eq:syst1}
		\mathcal{L}\bmf{u}^i+\omega^2\bmf{u}^i=\bmf 0 \text{ in } \mathbb{R}^2,
	\end{equation}
where $ \mathcal{L} := \mu \Delta+(\lambda+\mu)\nabla (\nabla \cdot )$. The incident elastic plane wave $\bmf{u}^i$ impinges upon an elastic medium scatterer $\Omega \Subset \mathbb R^2$ embedded in an infinite isotropic and homogeneous elastic medium in $\mathbb R^2$. 	The elastic scatterer $\Omega$ is a bounded Lipschitz domain with corresponding medium configuration $q \in L^\infty(\Omega)$ and $\eta \in L^\infty(\partial\Omega)$, both of which are real-valued functions. Indeed, the mass density of the scatterer $\Omega$ is characterized by $q$, and a generalized transmission condition of the elastic wave across $\partial \Omega$ is characterized by the boundary parameter $\eta$.

The aforementioned  elastic scattering problem is governed by the  following system:
	\begin{equation}\label{eq:syst2}
		\begin{cases}
			\mathcal{L}\bmf {u} + \omega^2q\bmf {u}=\bmf 0,& \mbox{in}\quad \Omega,\\
			\mathcal{L}\bmf {u} + \omega^2 \bmf {u}=\bmf 0, &  \mbox{in}\quad \mathbb{R}^2 \backslash \overline \Omega ,\\
			\bmf {u^+} = \bmf {u^-},\ T_\nu \bmf {u^+} + \eta \bmf {u^+} = T_\nu \bmf {u^-},\quad &\mbox{on}\quad \partial \Omega,
		\end{cases}
	\end{equation}
	where $ \bmf{u}$ is the total elastic wave field. Let $ \bmf{u}^{s}:=\bmf{u}-\bmf{u}^i $ denote the scattered elastic wave field. It is noticed that the notation $\bmf u^\pm$ in the third equation of \eqref{eq:syst2} indicates that we take the limit of $\bmf u$ on $\partial \Omega$ from inside and outside of $\Omega$, respectively. The following defines the traction operator $T_\nu\bmf{u}$ on $\partial \Omega$ in the third equation of \eqref{eq:syst2}.
	Specifically, we have  \begin{equation} \label{eq:bto}
		T_\nu \bmf {u}=\lambda(\nabla \cdot \bmf{u})\nu+2\mu (\nabla^s\bmf{u})\nu,\quad \nabla^s\bmf{u}=\frac{1}{2}\left(\nabla\bmf{u}+\nabla{\bmf{u}^\top }\right),
	\end{equation}
where $\nu$ represents the exterior normal direction to
	$\partial \Omega$. 
  Let us define $ \bf curl $ and curl operators in $\mathbb R^2$ as follows:
	\begin{equation}\label{eq:curl}
		{\rm curl}~ \bm{v}=\partial_1v_2-\partial_2v_1, \quad {\bf curl}~{w} =(\partial_1 w,-\partial_2 w)^\top.
	\end{equation}
	Here, $\bm v = (v_1, v_2)$ and $w$ are vector-valued and scalar functions, correspondingly.  Introduce the compressional wave $\bmf{u}_p^{s}$ and shear wave $\bmf{u}_s^{s}$ of $\bmf{u}^s$ as follows:
	\begin{equation}\label{u^sc}	
\bmf{u}_p^s:
		=-\frac{1}{k_p^2}\nabla(\nabla \cdot\bmf{u}^s ),\quad
		\bmf{u}_s^s:=\frac{1}{k_s^2}{\bf curl} \ \mathrm {curl} \bmf{u}^s. 
	\end{equation} 
	Then, we define the Helmholtz decomposition of the scattered wave $\bmf{u}^{s}$ in \eqref{eq:syst2} as follows: 
	$$ 
\bmf {u}^s=\bmf{u}_p^s+\bmf{u}_s^s.
$$ 
It is required that the scattered elastic wave $\bmf{u}^{s} $ satisfies the Kupradze radiation condition given by  
	\begin{equation}\label{eq:radition}
		\lim_{r\to\infty}r^\frac{1}{2}\left(\frac{\partial\bmf{u}_\beta^{s}}{\partial r}-{\rm i}k_\beta\bmf{u}_\beta^{s}\right)=0,\ r:=|\bmf{x}|,\ \beta=p,s,
	\end{equation}
	which characterize the outgoing physical feature of the scattered elastic wave $ \bmf{u}^{s}$.  
	






	 It should be noted that the generalized transmission condition $T_\nu \bmf {u^+} + \eta \bmf {u^+} = T_\nu \bmf {u^-}$ in \eqref{eq:syst2} provides not only mathematical generalization but also physical significance.  In fact, the conductive boundary condition is the term used to describe the similar transmission condition across the boundary in the context of electromagnetic scattering \cite{Angell92}. This boundary condition arises when effectively describing a thin layer of highly conducting coating \cite{Angell92}. In (\ref{eq:syst2}),  the generalized transmission boundary condition can be employed to accurately depict a thin layer of highly lossy elastic coating. We shall not elaborate on this point, however, as it is not the focus of this article. In particular, when the parameter $\eta$ in \eqref{eq:syst2} equals zero, the generalized transmission boundary condition reduces to the classical transmission condition across the boundary of $\Omega$. This boundary condition describes the continuity of the elastic wave with respect to the Dirichlet and Neumann data across the boundary of $\Omega$.   	 Furthermore,  if $\eta\equiv 0$, the scattering model \eqref{eq:syst2} is simplified to the conventional problem of scattering in an elastic medium.  In the following text, the parameter $\eta$ is referred to as the boundary impedance parameter.

	 Elastic scattering problems have been extensively studied due to their significant applications in seismology and geophysics \cite{AEGbook,Kupradze}, which
have various scientific uses such as nondestructive testing, medical imaging, and seismic exploration. Due to the coexistence of compressional and shear waves with varying wave numbers \cite{Ciarlet,LLbook}, the study of elastic scattering problems presents more challenges than that of acoustic and electromagnetic scattering problems.   The well-posedness of the system \eqref{eq:syst2} with  $\eta\equiv 0$ can be found  in  \cite{Hahner93, Hahner98,BHSY}. To prove the existence and uniqueness of a solution $\mathbf{u}=\mathbf{u}^-\chi_\Omega+\mathbf{u}^+\chi_{\mathbb{R}^2\backslash\overline{\Omega}}\in H_{\rm loc}^1(\mathbb{R}^2)^2$ to \eqref{eq:syst2}, the variational argument is employed in Section \ref{sec:direct problem}. The asymptotic expansions (see \cite{Hahner93,dassios88}) for $\bmf{u}^{{s}}$  can  be characterized  by 
	\begin{equation}\label{eq:far-field}
		\begin{split}
			\bmf{u}^{{s}}(\bmf{x})=& \frac{{e}^{\mathrm{i} k_{p} r}}{{r }} u_{p}^{\infty}(\hat{\bmf{x} } ) \hat{\bmf{x}}+\frac{{e}^{\mathrm{i} k_{s} r}}{{r} } U_{s}^{\infty}(\hat{\bmf{x} }) +O\left(\frac{1}{r^{3 / 2}}\right) ,\quad  U_{s}^{\infty}(\hat{\bmf{x} }) \cdot  \hat{\bmf{x} }=0,\quad \forall \bmf{x}\in \mathbb S^{1}
		\end{split}
	\end{equation}
	as $r =|\bmf{x} | \rightarrow \infty$, where the scalar analytic function $u_{p}^{\infty}$ and the vector-valued analytic function $U_{s}^{\infty}$ defined on the unit sphere $\mathbb S^{1}:=\{\hat{\bmf{x}}\in\mathbb{R}^2 \big| |\hat{\bmf{x}}|=1\}$ are referred to the far-field  pattern of  $\bmf{u}_{p}^{{s}}$ and $\bmf{u}_{p}^{{s}}$ respectively.  Define the far-field pattern $\bmf{u}_t^\infty$ of $\bmf{u}^{\mathrm{s}} $ as 
	$$
	\bmf{u}_t^{\infty}(\hat{\bmf{x}} ) :=u_{p}^{\infty}(\hat{\bmf{x}}) \hat{\bmf{x}}+U_{s}^{\infty}(\hat{\bmf{x}}).
	$$  
	 It is obvious that we have the following relationship  	
	 ${u}_{p}^{\infty}(\hat{\mathbf{x}} )=\bmf{u}_t^{\infty}(\hat{\bmf{x}}) \cdot \hat{\bmf{x}}$ and ${U}_{s}^{\infty}(\hat{\bmf{x}} )\cdot \hat{\bmf{x}}^{\perp}=\bmf{u}_t^{\infty}(\hat{\bmf{x}}) \cdot \hat{\bmf{x}}^{\perp}$.  Moreover, the one-to-one correspondence between $\bmf{u}_t^\infty$ and $\mathbf{u}^{\mathrm s}$ is a result of the Rellich Theorem \cite{Hahner98}.
	 This paper primarily focuses on solving the following inverse problem:	\begin{equation}\label{eq:ip1}
		\mathcal{F}(\Omega; \lambda,\mu,   q, \eta )=\bmf{u}_t^\infty(\hat{\mathbf{ x}};\mathbf{u}^i),
	\end{equation}
	where $\mathcal{F}$ is implicitly defined by the scattering system \eqref{eq:syst2}. If the a  far-field measurement  $\bmf{u}_t^{\infty}\left(\hat{\mathbf{x}}; \bmf{u}^i\right) (\hat{\mathbf{x}} \in \mathbb{S}^1)$ is provided in  association with a fixed incident wave $\bmf{u}^i$, it is referred to as a single far-field measurement for the inverse problem \eqref{eq:ip1}. Otherwise it is referred to as multiple measurements. Our focus is on   the geometrical inverse problem \eqref{eq:ip1}, which involves  recovering $\Omega$ independently of $q$ and $\eta$ through a single far-field measurement.   The unique determination of the shape of a scatterer through a single far-field measurement has a rich and intricate history in inverse scattering problems \cite{CK, ck18}.  Our focus lies specifically on scenarios where $\Omega$ exhibits polygonal-nest or polygonal-cell structures, which will be elaborated further in Section \ref{sec:Geometrical and mathematical setups}.  The polygonal-nest or polygonal-cell elastic scatterer has potential applications in seismology and geophysics. It is capable of modeling an elastic medium featuring layered structures and different physical parameters.


	

\subsection{Connections to previous results and main findings}

Determining the geometrical information of an underlying scatterer by measurement is a fundamental problem in inverse scattering. It is referred to as the unique identifiability of the scatterer shape. Since the cardinality of $\bmf{u}_t^{\infty}\left(\hat{\mathbf{x}}; \bmf{u}^i\right)$ associated with a single fixed incident wave $\bmf{u}^i$ is equal to the cardinality of $\partial \Omega$, it is clear that the geometrical inverse problem \eqref{eq:ip1} is formally determined by a single far-field measurement. On the contrary, the inverse problem of determining the physical parameters $q$ and $\eta$ associated with $\Omega$ through a single far-field measurement is underdetermined, given that $q$ and $\eta$ are variable functions. The uniqueness of identifying the position and shape of the scatterer from a single far-field measurement was referred to as Schiffer's problem in the literature \cite{CK}. Up to now, the corresponding developments to Schiffer's problem in inverse acoustic and electromagnetic scattering problems have been made by imposing a-priori information on the scatterer $\Omega$, such as the size or geometry of $\Omega$ \cite{ck18,CK,dlbook} and the references cited therein for further details.  



When an elastic scatter is impenetrable, the corresponding scattering problem is referred to as elastic obstacle scattering. The first uniqueness result for determining a rigid obstacle by multifrequency and a fixed incident wave or a fixed frequency but for all incident directions and a certain set of polarizations was demonstrated in \cite{hg93}. Later, it was shown that one can uniquely determine a smooth elastic obstacle from the shear part of the far-field pattern corresponding to all incident plane compressional (or shear) waves (cf. \cite{GS12}). A polyhedral elastic obstacle with the third or fourth kind boundary conditions can be uniquely identified by  finitely many far field measurements (cf.\cite{EY2010}), where the reflection principle for the Navier system is utilized to obtain the desired result. An extraction formula for an unknown linear crack or the convex hull of an unknown polygonal cavity in $\mathbb R^2$ was established using the enclosure method and a single set of boundary data \cite{ii07,ii09}.  A formula was developed to reconstruct cracks in an anisotropic and inhomogeneous elastic body by measuring its elastic displacement and traction at the boundary \cite{NUW}.  Furthermore, the inclusion or cavity embedded in a plane elastic body with inhomogeneous anisotropic medium can be uniquely reconstructed by  infinitely
many localized boundary measurements (cf. \cite{NW2006}).


For the elastic medium scattering problem, specifically when $\eta \equiv 0$ in \eqref{eq:syst2}, we note that \cite{hahner02,Hahner93} provided the identifiable result to the inverse problem \eqref{eq:ip1} when multiple far-field measurements are employed. Under general circumstances, a single far-field measurement is capable of uniquely determining a polygonal elastic scatterer with the generalized transmission boundary condition, as illustrated in the work of \cite{DLS21}, where the parameter $\eta$ for the corresponding generalized transmission condition can also be simultaneously identified.  When a scatterer has the polygonal-nest or polygonal-cell structure, in the acoustic scattering, it was shown that the uniqueness result for determining the shape of the underlying scatterer by a single far-field measurement hold under generic physical conditions (cf.\cite{blaasliu2020}).  If a scatterer has a thin layer with high conductivity, a single far-field measurement under generic conditions (cf. \cite{caodiaoliu2020}) can uniquely determine the shape and location  of the scatterer with the polygon-nest or polygon-cell structure.


In this study, we aim to demonstrate the uniqueness results for the identification of the shape and location of an elastic polygonal nest or polygonal cell structure scatterer associated with \eqref{eq:syst2} by a single far-field measurement. In addition, a single far-field measurement can also determine the physical parameters $q$ and $\eta$ in \eqref{eq:syst2} if they are piecewise constants. To achieve the desired result, we will study the coupled transmission PDE system around the underlying corner and apply the complex geometric optics (CGO) solution to investigate the singularity caused by a corner in a microlocal way. 
	
	Finally, the remainder of the paper is structured as follows. In Section \ref{sec:direct problem}, we examine the existence and uniqueness of the solution to the direct scatter problem. In Section \ref{sec:Geometrical and mathematical setups}, we provide a number of necessary geometrical and mathematical setups. In Section \ref{sec:Auxiliary lemmas}, we derive auxiliary lemmas to facilitate subsequent analysis. In Section \ref{sec:Main uniqueness results}, we define admissible scatterers and prove the unique identifiability results.  
	
	\section{The direct problem}\label{sec:direct problem}

	In this section, we will demonstrate  the well-posedness of   \eqref{eq:syst2} to \eqref{eq:radition}. Since the incident wave $\bmf u^i$ fulfills \eqref{eq:syst1}, by introducing 
	\begin{align}\label{eq:fg}
			\bmf f:=\omega^2(1-q)\bmf u^i, \quad \bmf g:=-\eta \bmf u^i, 
	\end{align}
	the scattered wave filed $\bmf u^s$ of   \eqref{eq:syst2} to \eqref{eq:radition} satisfies the following problem
	\begin{align}
		\mathcal{L}\bmf {u}^s + \omega^2q\bmf {u}^s=\bmf f,\quad & \mbox{in}\quad \Omega,\label{eq:app1} \\
		\mathcal{L}\bmf {u}^s + \omega^2 \bmf {u}^s=\bmf 0,\quad  &  \mbox{in}\quad \mathbb{R}^2 \backslash \overline \Omega , \label{eq:app2}\\
		(\bmf {u^s})^+ -( \bmf {u^s})^-=\bmf 0,\ T_\nu (\bmf {u^s})^+ + \eta \bmf {(u^s)^+} - T_\nu (\bmf u^s)^-=\bmf g,\quad &\mbox{on}\quad \partial \Omega,\label{eq:app3} \\
		\lim_{r\to\infty}r^\frac{1}{2}\left(\frac{\partial\bmf{u}_\beta^{s}}{\partial r}-{\rm i}k_\beta\bmf{u}_\beta^{s}\right)=0,\quad & r:=|\bmf{x}|,\ \beta=p,s,  \label{eq:app4}
	\end{align}
where $k_\beta$ and $\bmf u^s_\beta$ are defined by \eqref{k_ps} and \eqref{u^sc}, respectively, $\beta=p,s$.  

Define the elastic tensor  $\mathcal  C=(C_{ijkl})_{i,j,k,l=1}^2$ as 
$$
C_{ijkl}=\lambda \delta_{ij} \delta_{kl}  + \mu (\delta_{ik}\delta_{jl} +\delta_{il} \delta_{jk} ),
$$
where $\delta_{ij}$ is the Kronecker symbol, $ \lambda $ and $ \mu $ are the Lam\'e constants satisfying \eqref{eq:conv}. Then it yields that
$$
\mathcal L \bmf u=\nabla \cdot (\mathcal C : \nabla \bmf u ),\quad \mathcal C : \nabla \bmf u=\left( \sum_{j,k,l=1}^2 C_{ijkl} \partial_l u_k \right)_{i=1}^2,\quad \bmf u=(u_i)_{i=1}^2, 
$$
where $\mathcal L$ is defined in \eqref{eq:syst1}.
For $\bmf f \in L^2(\Omega )^2$ and $\bmf g \in H^{-1/2}(\Omega )^2$, the solution $\bmf u^s\in H^1_{\rm loc}(\mathbb R^2)^2$ of \eqref{eq:app1} to \eqref{eq:app3} is understood in the weak sence:
\begin{align}\label{eq:int eq}
	&\int_{\mathbb R^2 \backslash \overline \Omega } \left( (\mathcal C : \nabla \bmf u^s ) :\nabla \overline {\boldsymbol  \varphi} - \omega^2 \bmf u^s \cdot \overline {\boldsymbol  \varphi } \right) \rmd\mathbf  x+\int_{ \Omega } \left( (\mathcal C : \nabla \bmf u^s ) :\nabla \overline {\boldsymbol  \varphi} - \omega^2 q \bmf u^s \cdot \overline {\boldsymbol  \varphi} \right) \rmd \mathbf  x\\
	& \quad -\int_{\partial \Omega } \eta \bmf u^s \cdot \overline {\boldsymbol  \varphi}  \rmd \sigma=-\int_{\Omega } \bmf f \cdot  \overline {\boldsymbol  \varphi} \rmd x-\int_{\partial \Omega} \bmf g \cdot \overline {\boldsymbol  \varphi} \rmd \sigma\notag
\end{align}
for any test function $\boldsymbol \varphi \in H_{\rm loc}^1(\mathbb R^2 )^2$, where $\boldsymbol{A}:\boldsymbol{B}=\sum_{i,j=1}^2 a_{ij}b_{ij}$ for $\boldsymbol{A}=(a_{ij})_{i,j=1}^2$ and $\boldsymbol{B}=(b_{ij})_{i,j=1}^2$.

Firstly, we demonstrates that there is at most one solution in $H_{\rm loc}^1(\mathbb R^2)^2$ for \eqref{eq:app1} to \eqref{eq:app4}, or equivalently, a unique solution exists in $H_{\rm loc}^1(\mathbb R^2)^2$ for \eqref{eq:int eq} and \eqref{eq:app4}. Prior to this, we revisit the Rellich's type lemma in linear elasticity, which is outlined in the following lemma. The subsequent lemma will be utilized to substantiate Theorem \ref{thm:41 unique}.

\begin{lem}\label{Rellich}\cite{Hahner98} 
	Let $B_R$ be an appropriate disk  centered at origin with a  radius $R \in \mathbb R_+$, and assume that $\bu^s$ is a radiating solution to
	$$
	\mu \Delta\bu^s+(\lambda +\mu )\nabla( \nabla\cdot\bu^s)+ \omega^2\bu^s={\bf 0},
	$$
	in $|\bx|\geq R$, where $\mu$ and $\lambda$ satisfy the strong convexity condition  \eqref{eq:conv}. 
	If
	\begin{align}\label{ineq:rellich}
		\Im\big( \int_{\partial B_R} \ct_{\nu}\overline{\bu^s}\cdot {\bu^s} \rmd s  \big)\geq 0,
	\end{align}
	then $\bu^s=\bf 0$ in $|\bx|\geq R$.
\end{lem}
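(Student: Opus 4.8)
The plan is to reduce the elastic Rellich lemma to the classical scalar Rellich lemma for the Helmholtz equation by exploiting the Helmholtz decomposition of a radiating Lam\'e field, and to connect the hypothesis \eqref{ineq:rellich} to the far-field patterns through a flux identity obtained from Betti's formula. First I would decompose $\bu^s=\bu_p^s+\bu_s^s$ as in \eqref{u^sc}, so that in $|\bx|\geq R$ the compressional part satisfies $\Delta\bu_p^s+k_p^2\bu_p^s=\mathbf 0$ and the shear part satisfies $\Delta\bu_s^s+k_s^2\bu_s^s=\mathbf 0$, with $k_p,k_s$ as in \eqref{k_ps}. The strong convexity condition \eqref{eq:conv} guarantees that $k_p,k_s$ are real, positive, and distinct; moreover each of $\bu_p^s,\bu_s^s$ inherits the Kupradze radiation condition \eqref{eq:radition} with its own wave number, hence is a radiating solution of a (vector) Helmholtz equation whose far-field patterns are $u_p^\infty\hat{\bx}$ and $U_s^\infty$ respectively, as recorded in \eqref{eq:far-field}.

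Next I would apply Betti's first (Green's) identity for the operator $\mathcal L$ on the annulus $R\leq|\bx|\leq R'$ to the pair $(\bu^s,\overline{\bu^s})$. Using $\mathcal L\bu^s=-\omega^2\bu^s$ together with the fact that the elastic strain-energy density $\lambda|\nabla\cdot\bu^s|^2+2\mu|\nabla^s\bu^s|^2$ and the term $\omega^2|\bu^s|^2$ are both real, I would take imaginary parts to conclude that $\Im\big(\int_{\partial B_\rho}\ct_\nu\overline{\bu^s}\cdot\bu^s\,\rmd s\big)$ is independent of $\rho\in[R,\infty)$. Then, letting $\rho=R'\to\infty$, I would evaluate this flux by means of the far-field expansion \eqref{eq:far-field}: on a large circle the traction $\ct_\nu$ is dominated, to leading order, by its normal part, which for each Helmholtz component behaves like $\mathrm i k_\beta$ times that component up to lower-order terms; the compressional part is radial ($\parallel\hat{\bx}$) and the shear part tangential ($\perp\hat{\bx}$). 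Because $k_p\neq k_s$ and $U_s^\infty\cdot\hat{\bx}=0$, the P--S cross terms average out to lower order while the diagonal terms survive, yielding
\[
\Im\Big(\int_{\partial B_R}\ct_\nu\overline{\bu^s}\cdot\bu^s\,\rmd s\Big)=-\Big(\gamma_p\int_{\mathbb S^1}|u_p^\infty|^2\,\rmd\hat{\bx}+\gamma_s\int_{\mathbb S^1}|U_s^\infty|^2\,\rmd\hat{\bx}\Big)
\]
with explicit positive constants $\gamma_p,\gamma_s$ determined by $\omega,\lambda,\mu$ and fixed by the sign convention of the radiation condition.

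Combining this identity with the hypothesis \eqref{ineq:rellich} forces the nonpositive right-hand side to be nonnegative, which is possible only if both far-field integrals vanish, i.e. $u_p^\infty\equiv 0$ and $U_s^\infty\equiv\mathbf 0$ on $\mathbb S^1$. Finally, applying the classical Rellich lemma for the Helmholtz equation to $\bu_p^s$ (with wave number $k_p$) and to each component of $\bu_s^s$ (with wave number $k_s$), the vanishing of the far-field patterns yields $\bu_p^s=\bu_s^s=\mathbf 0$, and therefore $\bu^s=\mathbf 0$ in $|\bx|\geq R$. The main obstacle I anticipate is the second step above: deriving the far-field flux identity with the correct sign and verifying rigorously that the P--S cross contributions vanish in the limit $R'\to\infty$, which relies essentially on $k_p\neq k_s$ guaranteed by \eqref{eq:conv} and on the orthogonality $U_s^\infty\cdot\hat{\bx}=0$ from \eqref{eq:far-field}.
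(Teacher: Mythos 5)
The paper does not prove this lemma at all --- it is quoted verbatim from \cite{Hahner98} --- so there is no in-paper argument to compare against; your proposal has to stand on its own, and it does: it is precisely the standard proof of the elastic Rellich lemma (Helmholtz decomposition, radius-independence of the imaginary energy flux via Betti's formula, evaluation of the flux at infinity through the far-field patterns, then the scalar Rellich lemma componentwise), which is also how the cited reference argues. Two points are worth pinning down in the step you yourself flag as the main obstacle. First, $k_p\neq k_s$ is indeed guaranteed by \eqref{eq:conv}, since $2\mu+\lambda=\mu$ would force $\lambda+\mu=0$. Second, the vanishing of the P--S cross terms in $\lim_{\rho\to\infty}\Im\int_{\partial B_\rho}\ct_\nu \bu^s\cdot\overline{\bu^s}\,\rmd s$ does not follow from pointwise decay alone (the cross terms oscillate like $e^{\pm\bsi(k_p-k_s)\rho}$ without decaying after integration over the circle); you need either (i) the observation that the leading-order traction of the compressional part is radial while that of the shear part is tangential, so the leading cross contributions cancel pointwise by $U_s^\infty\cdot\hat{\bx}=0$ and only the $O(\rho^{-3/2})\times O(\rho^{-1/2})$ remainders survive, integrating to $O(\rho^{-1})$, or (ii) an averaging of the identity over $\rho\in(R',2R')$ combined with $k_p\neq k_s$. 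Either route closes the argument, and your sign bookkeeping (the hypothesis \eqref{ineq:rellich} is stated with $\ct_\nu\overline{\bu^s}\cdot\bu^s$, whose imaginary part is the negative of the outgoing flux) is consistent, so the conclusion $u_p^\infty\equiv 0$, $U_s^\infty\equiv\bmf 0$ and hence $\bu^s=\bmf 0$ follows as you describe.
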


\begin{thm}\label{thm:41 unique}
	For any $\bmf f \in L^2(\Omega )^2$ and $\bmf g \in H^{-1/2}(\Omega )^2$, 
	then there exists at most one solution $\bmf u^s\in H^1_{\rm loc}(\mathbb R^2)^2$ of \eqref{eq:app1} to \eqref{eq:app4}. 
\end{thm}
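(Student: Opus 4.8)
The plan is to establish uniqueness by showing that the corresponding homogeneous problem admits only the trivial solution. Suppose $\bmf u^s_1, \bmf u^s_2 \in H^1_{\mathrm{loc}}(\mathbb R^2)^2$ both solve \eqref{eq:app1}--\eqref{eq:app4} for the same data $\bmf f, \bmf g$, and set $\bw := \bmf u^s_1 - \bmf u^s_2$. By linearity $\bw$ satisfies \eqref{eq:app1}--\eqref{eq:app4} with $\bmf f=\bmf 0$ and $\bmf g=\bmf 0$, so it suffices to prove $\bw\equiv \bmf 0$. Fixing a disk $B_R$ with $\overline\Omega\subset B_R$, I would test the interior and exterior equations for $\bw$ against $\overline{\bw}$ and integrate by parts via Betti's (Green's) formula for the Lam\'e operator, separately on $\Omega$ and on $B_R\setminus\overline\Omega$, taking care that the outward normal of the exterior region on $\partial\Omega$ is $-\nu$. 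Adding the two identities and invoking the continuity $\bw^+=\bw^-$ together with the traction jump $T_\nu\bw^- - T_\nu\bw^+ = \eta\,\bw$ read off from \eqref{eq:app3}, the interface contributions collapse into $\int_{\partial\Omega}\eta|\bw|^2\,\rmd\sigma$, yielding
\begin{equation*}
\int_{\partial B_R} T_\nu\bw\cdot\overline{\bw}\,\rmd\sigma = \int_{B_R}(\mathcal C:\nabla\bw):\nabla\overline{\bw}\,\rmd\mathbf x - \int_{\partial\Omega}\eta|\bw|^2\,\rmd\sigma - \omega^2\Big(\int_\Omega q|\bw|^2\,\rmd\mathbf x + \int_{B_R\setminus\overline\Omega}|\bw|^2\,\rmd\mathbf x\Big).
\end{equation*}

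The decisive observation is that every term on the right-hand side is real. Indeed, since $\mathcal C:\nabla\bw = \lambda(\nabla\cdot\bw)I + 2\mu\nabla^s\bw$, the strain-energy density reduces to $\lambda|\nabla\cdot\bw|^2 + 2\mu|\nabla^s\bw|^2$ with $\lambda,\mu$ real by \eqref{eq:conv}, while $q$ and $\eta$ are real valued by hypothesis. Hence $\Im\big(\int_{\partial B_R} T_\nu\bw\cdot\overline{\bw}\,\rmd\sigma\big)=0$, and because $T_\nu$ has real coefficients the conjugate integrand satisfies $\Im\big(\int_{\partial B_R} T_\nu\overline{\bw}\cdot\bw\,\rmd\sigma\big) = -\Im\big(\int_{\partial B_R} T_\nu\bw\cdot\overline{\bw}\,\rmd\sigma\big)=0\ge 0$. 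This is precisely the hypothesis \eqref{ineq:rellich} of Lemma \ref{Rellich}, so Rellich's lemma forces $\bw\equiv\bmf 0$ in $|\mathbf x|\ge R$. As $\bw$ solves the constant-coefficient system $\mathcal L\bw + \omega^2\bw=\bmf 0$ throughout the connected exterior $\mathbb R^2\setminus\overline\Omega$ and vanishes on the open set $\{|\mathbf x|>R\}$, real-analyticity of solutions (equivalently, unique continuation) gives $\bw\equiv\bmf 0$ in $\mathbb R^2\setminus\overline\Omega$.

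It then remains to propagate the vanishing into $\Omega$. From $\bw=\bmf 0$ in the exterior and the homogeneous transmission relations in \eqref{eq:app3}, I obtain $\bw^-=\bw^+=\bmf 0$ and $T_\nu\bw^- = T_\nu\bw^+ + \eta\bw^+ = \bmf 0$ on $\partial\Omega$; that is, $\bw$ has vanishing Cauchy data. Extending $\bw$ by zero across $\partial\Omega$ produces $\widetilde\bw\in H^1_{\mathrm{loc}}(\mathbb R^2)^2$ which weakly solves the single system $\mathcal L\widetilde\bw + \omega^2 q\chi_\Omega\widetilde\bw=\bmf 0$ in all of $\mathbb R^2$: the matching of both the Dirichlet and the traction data is exactly what guarantees that no spurious interface source appears in the weak formulation \eqref{eq:int eq}, and $\widetilde\bw$ vanishes on the nonempty open exterior. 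I expect this final step to be the main obstacle, since one cannot appeal to analyticity because $q$ is merely $L^\infty$; the conclusion instead rests on a unique continuation principle for the second-order Lam\'e system with bounded potential, which in the two-dimensional setting is available through Carleman-estimate results for elasticity. Granting such unique continuation, $\widetilde\bw\equiv\bmf 0$, whence $\bw\equiv\bmf 0$ in $\Omega$ as well, completing the proof. A secondary, routine but delicate, point is the rigorous justification of Betti's formula and the interface trace manipulations for merely $H^1$ solutions across the Lipschitz interface $\partial\Omega$.
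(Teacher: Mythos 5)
Your proposal is correct and follows essentially the same route as the paper: derive the energy identity on $B_R$ via Betti's formula and the transmission conditions, observe that the right-hand side is real so that the hypothesis of the elastic Rellich lemma holds, conclude $\bw\equiv\bmf 0$ outside $\Omega$ by analyticity/unique continuation, and then propagate the vanishing Cauchy data on $\partial\Omega$ into $\Omega$. The only difference is in the last step, where the paper appeals to Holmgren's principle while you correctly point out that $q\in L^\infty(\Omega)$ rules out an analyticity argument and invoke instead a Carleman-type unique continuation result for the Lam\'e system with bounded potential --- a more careful justification of the same conclusion.
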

\begin{proof}
	Let $\bmf u^s_1$ and $\bmf u^s_2$ be  two solutions of 	\eqref{eq:app1} to \eqref{eq:app4}. Set $\bmf w=\bmf  u^s_1-\bmf  u^s_2$, which fulfills 	\eqref{eq:app1} to \eqref{eq:app4} with $\bmf  f=\bmf g=\bmf  0$. 
	
	Suppose $R$ is sufficient large such that $\overline\Omega \subset B_R $, where $B_R$ is a disk centered  at the origin with the radius  $R$. Let $\phi \in C^{\infty} ( \mathbb R^2) $ such that $
	\phi (\mathbf x) \equiv 1$ for $|\mathbf x| <R$ and $\phi(\mathbf x)\equiv 0$ for $|\mathbf x| \geq R+1$. Choosing $\boldsymbol{\varphi}=\phi \bmf w $ in  \eqref{eq:int eq}, we deduce that
	\begin{align}
		&\int_{R\leq |\mathbf x|\leq R+1} \left( (\mathcal C : \nabla \bmf w) :\nabla \overline{\boldsymbol{ \varphi } } - \omega^2 \bmf w \cdot \overline{ \boldsymbol{\varphi} }\right) \rmd \mathbf x+\int_{B_R\backslash \overline \Omega  } \left( (\mathcal C : \nabla \bmf w ) :\nabla \overline{\bmf w } - \omega^2 |\bmf w |^2 \right) \rmd \mathbf x \notag \\
		&\quad +\int_{ \Omega } \left( (\mathcal C : \nabla \bmf w ) :\nabla \overline { \bmf w }- \omega^2 q |\bmf w|^2 \right) \rmd \mathbf  x -\int_{\partial \Omega }\eta  |\bmf w|^2  \rmd \sigma=0 .\notag
	\end{align}
		By applying the first Betti's formula, it follows that
	\begin{align}
		&-\int_{\partial B_R} T_\nu \bmf w \cdot \overline{\bmf  w }\rmd \sigma +\int_{B_R\backslash \overline \Omega  } \left( (\mathcal C : \nabla \bmf w ) :\nabla \overline{\bmf w } - \omega^2 |\bmf w |^2 \right) \rmd \mathbf x \notag \\
		&\quad +\int_{ \Omega } \left( (\mathcal C : \nabla \bmf w ) :\nabla \overline { \bmf w }- \omega^2 q |\bmf w|^2 \right) \rmd \mathbf  x -\int_{\partial \Omega }\eta  |\bmf w|^2  \rmd \sigma=0 .\notag
	\end{align}
	Due to \eqref{eq:conv}, the elastic tensor $\mathcal C$ satisfies  the uniform Legendre ellipticity condition:
	\begin{equation}\label{eq:uniform ellip}
		c_{\min}\|\boldsymbol{ A}\|\leq 	({\mathcal C } : \boldsymbol{A}):\overline{\boldsymbol{A} } \leq c_{\max}\|\boldsymbol{ A}\|,\quad  \forall \boldsymbol{A}  \in \mathbb C^{2 \times 2} \quad \mbox{being a symmetric matrix}, 
	\end{equation}
	where $\|\boldsymbol{A}\| $ is the Frobenius norm, $c_{\min}$ and $c_{\max}$ are two positive constants. In view of \eqref{eq:uniform ellip}, one  has
	$$
	\Im\big( \int_{\partial B_R} \ct_{\nu}\overline{\bmf w}\cdot {\bmf w} \rmd \sigma  \big)\geq 0,
	$$
	which implies that $\bmf w$ vanishes in $\mathbb R^2\backslash B_R$ by using Lemma \ref{Rellich}. Due to the interior regularity of the elliptic PDE,  it follows that $\bmf w$ is real analytic in $\mathbb R^2 \backslash \overline \Omega$. Subsequently, the unique continuation principle establishes that $\bmf w\equiv \bmf 0$ in $\mathbb R^2 \backslash \overline \Omega$. Utilizing the trace theorem and the boundary condition of $\bmf w$, it follows that $\bmf w=\bmf 0$ and $T_\nu \bmf w=\bmf 0$ on $\partial \Omega$. According to Holmgren's principle, it can be concluded that $\bmf w\equiv \bmf 0$ in $\overline \Omega$.
	
	The proof is complete. 
\end{proof}

Next, we will examine the existence of \eqref{eq:app1} to \eqref{eq:app4} using variational techniques. For this purpose, we will initially present an equivalent problem of \eqref{eq:app1} to \eqref{eq:app4} in a bounded domain by adopting the Dirichlet-to-Neumann operator (DtN).

\begin{defn}
	For any $\mathbf h\in H^{1/2}(\partial B_R)^2$, where $B_R$ is a ball centered at the origin with radius $R$ such that $\overline \Omega \subset B_R$, suppose that $\mathbf v^s \in H^1_{\rm loc}(\mathbb R^2\backslash \overline B_R )^2$ is the unique radiating solution to the boundary value problem
	\[
	\mathcal L \mathbf v^s+\omega^2 \mathbf v^s=\mathbf 0 \quad \mbox{in} \quad \mathbb R^2\backslash \overline B_R,\quad \mathbf v^s=\mathbf h \quad \mbox{on} \quad \partial B_R,
	\]
	then the DtN operator $\mathcal T$ from $ H^{1/2}(\partial B_R)^2$ to $ H^{-1/2}(\partial B_R)^2$  is defined as follows
	\[
	\mathcal T \mathbf h:=T_\nu \mathbf v^s |_{\partial B_R}. 
	\]
\end{defn}

The subsequent lemma outlines key properties of the DtN map $\mathcal T$, which will be utilized to establish the existence of \eqref{eq:app1} to \eqref{eq:app4}.

\begin{lem}\label{lem:T decompose}\cite[Lemma 2.8]{BHSY}
	The operator $\mathcal{T}$ is bounded  from $ H^{1/2}(\partial B_R)^2$ to $ H^{-1/2}(\partial B_R)^2$. The operator $-\mathcal{T}$ can be decomposed into the sum of a positive operator $\mathcal{T}_1$ and a compact operator $\mathcal{T}_2$, that is,
	$$
	\Re(\int_{\partial B_R} \mathcal{T}_1 w \cdot \bar{w} \mathrm{~d} \sigma) \geqslant c\|w\|_{H^{1 / 2}\left(\partial B_R\right)}^2
	$$
	for some constant $c>0$, such that $\mathcal{T}_2=-\mathcal{T}-\mathcal{T}_1:H^{1 / 2}\left(\partial B_R\right) \rightarrow H^{-1 / 2}\left(\partial B_R\right)$ is compact.
\end{lem}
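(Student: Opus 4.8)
The plan is to derive an explicit Fourier representation of $\mathcal{T}$ by separation of variables and then read off the decomposition from the large-mode asymptotics of the resulting symbol. First I would pass to polar coordinates $(r,\theta)$ centered at the origin and use the compressional--shear decomposition of the radiating solution $\mathbf{v}^s=\mathbf{v}_p+\mathbf{v}_s$ as in \eqref{u^sc}, writing $\mathbf{v}_p=\nabla\phi$ and $\mathbf{v}_s=\mathbf{curl}\,\psi$, where the scalar potentials $\phi,\psi$ solve the Helmholtz equations with wavenumbers $k_p,k_s$ from \eqref{k_ps} together with the Sommerfeld radiation condition. The radiating solutions in $|\mathbf{x}|\geq R$ then expand in Fourier modes $e^{\mathrm{i}n\theta}$ with radial parts $H_n^{(1)}(k_p r)$ and $H_n^{(1)}(k_s r)$. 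Matching the Dirichlet datum $\mathbf{h}=\sum_n \widehat{\mathbf{h}}_n e^{\mathrm{i}n\theta}$ on $\partial B_R$ and then applying the traction operator $T_\nu$ from \eqref{eq:bto} produces $\mathcal{T}\mathbf{h}=\sum_n M_n \widehat{\mathbf{h}}_n e^{\mathrm{i}n\theta}$, where each $M_n$ is an explicit $2\times 2$ matrix assembled from $H_n^{(1)}(k_\beta R)$ and $\bigl(H_n^{(1)}\bigr)'(k_\beta R)$ for $\beta=p,s$.

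The boundedness of $\mathcal{T}$ from $H^{1/2}(\partial B_R)^2$ to $H^{-1/2}(\partial B_R)^2$ would then follow once I establish $\|M_n\|=O(|n|)$ as $|n|\to\infty$. This is a consequence of the uniform asymptotics $\bigl(H_n^{(1)}\bigr)'(z)/H_n^{(1)}(z)=-n/z+O(1/|n|)$ for fixed $z>0$ and $|n|\to\infty$ (obtained from the Hankel recurrence together with $H_{n-1}^{(1)}(z)/H_n^{(1)}(z)\to 0$), combined with the fact that the $H^{1/2}$ and $H^{-1/2}$ norms weight the $n$-th mode by $(1+n^2)^{\pm 1/2}$.

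For the decomposition I would isolate the leading term of $-M_n$ as $|n|\to\infty$. The coupling between the compressional and shear components through $T_\nu$ renders $M_n$ non-diagonal, but its principal part should take the form $(c|n|/R)\,I+O(1)$ with a self-adjoint, uniformly positive definite leading coefficient, the positivity being forced by the strong convexity condition \eqref{eq:conv}. Defining $\mathcal{T}_1$ as the Fourier multiplier associated with this principal symbol, the coercivity $\Re\bigl(\int_{\partial B_R}\mathcal{T}_1 w\cdot\bar{w}\,\mathrm{d}\sigma\bigr)\geq c\|w\|_{H^{1/2}}^2$ is immediate since its symbol grows like $|n|$. The remainder $\mathcal{T}_2=-\mathcal{T}-\mathcal{T}_1$ then has symbol of order $O(1)$ in $|n|$, so as a map $H^{1/2}\to H^{-1/2}$ it gains a full order of regularity and is the operator-norm limit of its finite-mode truncations, hence compact.

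The main obstacle I anticipate is the asymptotic analysis of the $2\times 2$ matrix symbol $M_n$ for the \emph{coupled} elastic system, as opposed to the scalar Helmholtz multiplier. One must combine the radial Hankel asymptotics for both $k_p$ and $k_s$ with the tangential derivatives introduced by $\mathbf{curl}$ and by $T_\nu$, and then verify that the order-$|n|$ part of $-M_n$ is Hermitian and uniformly coercive across \emph{all} modes --- including the finitely many small $|n|$, where the Hankel ratios must merely be bounded away from degeneracy, which uses $H_n^{(1)}(k_\beta R)\neq 0$. Ensuring that the coercivity constant and the compactness estimate remain uniform in $n$ is precisely where the strong convexity condition \eqref{eq:conv} enters decisively.
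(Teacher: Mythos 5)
The paper does not actually prove this lemma: it is imported verbatim as a citation to \cite[Lemma 2.8]{BHSY}, so there is no in-paper argument to measure you against. Your separation-of-variables strategy --- expand the radiating field via the Helmholtz potentials $\phi,\psi$, obtain a $2\times2$ Fourier multiplier $M_n$ built from $H_n^{(1)}(k_\beta R)$ and its derivative, read off boundedness from $\|M_n\|=O(|n|)$, and split off the order-$|n|$ principal part as $\mathcal{T}_1$ --- is the standard route for a circular truncation boundary and is essentially how such DtN decompositions are established in the literature the paper relies on. The boundedness argument, the compactness of a bounded-symbol multiplier viewed as a map $H^{1/2}\to H^{-1/2}$ (via the compact embedding $H^{1/2}\hookrightarrow H^{-1/2}$ on the compact curve $\partial B_R$), and the use of $H_n^{(1)\prime}(z)/H_n^{(1)}(z)=-|n|/z+O(1/|n|)$ are all correct.

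That said, the decisive step of the lemma is precisely the one you defer: you assert, but do not verify, that the order-$|n|$ part of $-M_n$ is a Hermitian, uniformly positive definite matrix times $|n|/R$. For the coupled Lam\'e system the principal symbol of the traction-DtN map in the tangent--normal frame is a non-diagonal Hermitian matrix whose eigenvalues are positive exactly because of \eqref{eq:conv}; writing this out (or invoking the known positivity of the elastostatic Steklov operator under strong convexity) is the substance of the proof, and without it the argument is a plan rather than a proof. Two smaller repairs are also needed: (i) a symbol that merely ``grows like $|n|$'' vanishes at $n=0$ and is not coercive in $H^{1/2}$, so $\mathcal{T}_1$ must be defined with symbol bounded below by $c(1+|n|)$ (e.g.\ add a positive multiple of the identity on finitely many low modes and absorb the change into $\mathcal{T}_2$); and (ii) matching the Dirichlet datum mode by mode requires inverting a $2\times2$ matrix coupling the $p$- and $s$-potentials, whose invertibility follows from uniqueness of the exterior Dirichlet problem for the Navier equation (a Rellich-type argument as in Lemma \ref{Rellich}), not merely from the non-vanishing of the individual Hankel functions $H_n^{(1)}(k_\beta R)$.
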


By the DtN operator, we may now rephrase (\ref{eq:app1})-(\ref{eq:app3}) as an equivalent form in a bounded domain. Specifically, given $\bmf f \in L^2(\Omega )^2$ and $\bmf g \in H^{-1/2}(\Omega )^2$, we want to find $\bmf v \in H^1\left(B_R\right)^2$ satisfying (\ref{eq:app1})-(\ref{eq:app3}) and the boundary condition
\begin{equation}\label{eq:DtN}
	\mathcal T \mathbf v:=T_\nu \mathbf v \quad \text{on}\ \partial B_R.
\end{equation}
Therefore we can deduce that $\bmf v \in H^1\left(B_R\right)^2$ solves
\begin{equation}\label{eq:a(v,var)=b}
		a(\bmf v, \boldsymbol {\varphi})=b(\boldsymbol{\varphi} ) \quad \forall \boldsymbol {\varphi} \in H^1\left(B_R\right),
	\end{equation}
where
	$$
	\begin{aligned}
		a(\bmf v,\boldsymbol {\varphi})&=\int_{B_R \backslash \overline \Omega } \left( (\mathcal C : \nabla \bmf v ) :\nabla \overline {\boldsymbol  \varphi} - \omega^2 \bmf v \cdot \overline {\boldsymbol  \varphi } \right) \rmd\mathbf  x-\int_{\partial B_R}\mathcal T \mathbf v\cdot\overline {\boldsymbol  \varphi}\rmd \sigma\\
		& +\int_{ \Omega } \left( (\mathcal C : \nabla \bmf v ) :\nabla \overline {\boldsymbol  \varphi} - \omega^2 q \bmf v \cdot \overline {\boldsymbol  \varphi} \right) \rmd \mathbf  x -\int_{\partial \Omega } \eta \bmf v \cdot \overline {\boldsymbol  \varphi}  \rmd \sigma ,
	\end{aligned}
	$$
	and
	$$
	b(\boldsymbol{\varphi})=-\int_{\Omega } \bmf f \cdot  \overline {\boldsymbol  \varphi} \rmd x-\int_{\partial \Omega} \bmf g \cdot \overline {\boldsymbol  \varphi} \rmd \sigma\notag.
	$$


We can show that a solution $\bmf v$ to problem (\ref{eq:app1})-(\ref{eq:app3}) and (\ref{eq:DtN}) can be extended to a solution to the scattering problem (\ref{eq:app1})-(\ref{eq:app4}). Similarly, $\bmf v$, which is  is a solution to the scattering problem (\ref{eq:app1})-(\ref{eq:app4}), can be restricted to $B_R$ and solves  (\ref{eq:app1})-(\ref{eq:app3}) and (\ref{eq:DtN}).  Therefore, according to Theorem \ref{thm:41 unique}, the problem (\ref{eq:app1}) to (\ref{eq:app3}) and (\ref{eq:DtN}) can have at most one solution. Additionally, we provide the proof of existence for the solution to problem (\ref{eq:app1}) to (\ref{eq:app4}).


\begin{thm}\label{thm:22}
	Let $\bmf{f} \in L^2(\Omega)$ and $\bmf{g} \in H^{-1 / 2}(\partial \Omega)$. If $\eta \leq 0$ a.e. on $\partial \Omega$, then problem \eqref{eq:a(v,var)=b} has a unique solution $\bmf v \in H^1\left(B_R\right)$. Furthermore,
	\begin{equation}\label{eq:estimate of bmf v}
		\|\bmf{v}\|_{H^1\left(B_R\right)} \leqslant C\left(\left\|\bmf{f}\right\|_{L^2(\Omega)}+\|\bmf{g}\|_{H^{-1 / 2}(\partial \Omega)}\right).
	\end{equation}
	with a positive constant $C$ independent of $\bmf f$ and $\bmf g$.
\end{thm}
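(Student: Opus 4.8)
The plan is to recast the variational problem \eqref{eq:a(v,var)=b} as an operator equation on the Hilbert space $H^1(B_R)^2$ and to apply the Fredholm alternative, so that existence and the a priori bound \eqref{eq:estimate of bmf v} follow from the uniqueness already guaranteed by Theorem \ref{thm:41 unique}. First I would check that $a(\cdot,\cdot)$ is a bounded sesquilinear form on $H^1(B_R)^2\times H^1(B_R)^2$: the volume integrals are controlled by $\|\mathcal C\|$, $\omega^2$ and $\|q\|_{L^\infty(\Omega)}$; the integral over $\partial B_R$ by the boundedness of $\mathcal T$ from Lemma \ref{lem:T decompose}; and the impedance integral over $\partial\Omega$ by $\|\eta\|_{L^\infty(\partial\Omega)}$ together with the trace theorem. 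In the same way the antilinear functional $b$ is bounded, with $|b(\boldsymbol\varphi)|\le C\big(\|\bmf f\|_{L^2(\Omega)}+\|\bmf g\|_{H^{-1/2}(\partial\Omega)}\big)\|\boldsymbol\varphi\|_{H^1(B_R)}$ by trace duality.

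Next I would split $a=a_1+a_2$, peeling the principal coercive part off the lower-order compact part. Using the decomposition $-\mathcal T=\mathcal T_1+\mathcal T_2$ of Lemma \ref{lem:T decompose} and noting that the Lam\'e constants coincide inside and outside $\Omega$ (so the two gradient integrals merge into one over $B_R$), set
\begin{align*}
a_1(\bmf v,\boldsymbol\varphi)&=\int_{B_R}(\mathcal C:\nabla\bmf v):\nabla\overline{\boldsymbol\varphi}\,\rmd\mathbf x+\int_{B_R}\bmf v\cdot\overline{\boldsymbol\varphi}\,\rmd\mathbf x\\
&\quad+\int_{\partial B_R}\mathcal T_1\bmf v\cdot\overline{\boldsymbol\varphi}\,\rmd\sigma-\int_{\partial\Omega}\eta\,\bmf v\cdot\overline{\boldsymbol\varphi}\,\rmd\sigma,
\end{align*}
and let $a_2=a-a_1$ absorb the remaining terms, namely the $L^2$ pairings $-\int_{B_R\backslash\overline\Omega}(1+\omega^2)\bmf v\cdot\overline{\boldsymbol\varphi}\,\rmd\mathbf x-\int_\Omega(1+\omega^2 q)\bmf v\cdot\overline{\boldsymbol\varphi}\,\rmd\mathbf x$ together with $\int_{\partial B_R}\mathcal T_2\bmf v\cdot\overline{\boldsymbol\varphi}\,\rmd\sigma$. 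To prove $a_1$ coercive I would combine four facts: by the symmetry $C_{ijkl}=C_{ijlk}$ one has $(\mathcal C:\nabla\bmf v):\nabla\overline{\bmf v}=(\mathcal C:\nabla^s\bmf v):\nabla^s\overline{\bmf v}$, which by the Legendre ellipticity \eqref{eq:uniform ellip} (itself a consequence of \eqref{eq:conv}) is bounded below by $c_{\min}\|\nabla^s\bmf v\|_{L^2(B_R)}^2$; Korn's inequality on the Lipschitz domain $B_R$ then upgrades $\|\nabla^s\bmf v\|_{L^2}^2+\|\bmf v\|_{L^2}^2$ to full control of $\|\bmf v\|_{H^1(B_R)}^2$; the term $\int_{\partial B_R}\mathcal T_1\bmf v\cdot\overline{\bmf v}$ has nonnegative real part by Lemma \ref{lem:T decompose}; and the hypothesis $\eta\le 0$ makes $-\int_{\partial\Omega}\eta|\bmf v|^2\ge 0$. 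Hence $\Re a_1(\bmf v,\bmf v)\ge c\|\bmf v\|_{H^1(B_R)}^2$.

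I would then verify that $a_2$ is compact: the two volume pairings factor through the compact embedding $H^1(B_R)\hookrightarrow L^2(B_R)$, while $\int_{\partial B_R}\mathcal T_2\bmf v\cdot\overline{\boldsymbol\varphi}\,\rmd\sigma$ is compact because $\mathcal T_2$ is compact by Lemma \ref{lem:T decompose}. Representing the forms through the Riesz map as operators $A_1,A_2$ on $H^1(B_R)^2$, coercivity makes $A_1$ boundedly invertible (Lax--Milgram) while $A_2$ is compact, so $A_1+A_2=A_1(\mathrm{Id}+A_1^{-1}A_2)$ is Fredholm of index zero and the Fredholm alternative reduces existence to triviality of the kernel. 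For the kernel, a solution of the homogeneous problem ($\bmf f=\bmf g=\bmf 0$) extends from $B_R$ to a radiating solution of \eqref{eq:app1}--\eqref{eq:app4} via the DtN construction described above, whence Theorem \ref{thm:41 unique} forces it to vanish on $B_R$. Thus $\mathrm{Id}+A_1^{-1}A_2$ is injective, hence bijective with bounded inverse, which yields existence and uniqueness and the bound $\|\bmf v\|_{H^1(B_R)}\le C\|b\|_{(H^1)'}\le C\big(\|\bmf f\|_{L^2(\Omega)}+\|\bmf g\|_{H^{-1/2}(\partial\Omega)}\big)$, that is, \eqref{eq:estimate of bmf v}.

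The main obstacle, and the step I expect to demand the most care, is the coercivity of $a_1$: the ellipticity \eqref{eq:uniform ellip} only controls the symmetric gradient $\nabla^s\bmf v$, so passing to the full gradient genuinely requires Korn's inequality, and one must allocate the boundary contributions correctly --- keeping the positive DtN part $\mathcal T_1$ and, crucially, the impedance term (nonnegative precisely because $\eta\le0$) on the coercive side, while consigning $\mathcal T_2$ and all the $L^2$ terms to the compact remainder $a_2$. Everything else is a routine assembly of the Fredholm argument.
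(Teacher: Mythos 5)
Your proposal is correct and follows essentially the same route as the paper: the same splitting $a=a_1+a_2$ with the coercive part built from the elastic energy, a zeroth-order $L^2$ term, the positive DtN piece $\mathcal T_1$ and the impedance term (nonnegative since $\eta\le 0$), coercivity via Legendre ellipticity plus Korn's inequality, compactness of $a_2$ via Rellich embedding and $\mathcal T_2$, and the Fredholm alternative with the kernel killed by Theorem \ref{thm:41 unique}. The only cosmetic difference is that you take the zeroth-order constant equal to $1$ where the paper takes a "sufficiently large" $m$; with the second Korn inequality on $B_R$ either choice works.
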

\begin{proof}
Using Lemma \ref{lem:T decompose}, for  $a$ defineded  in  \eqref{eq:a(v,var)=b}, 	we decompose $a$ as $a=a_1+a_2$, where
	$$
	\begin{aligned}
		a_1(\bmf v,\boldsymbol {\varphi})&=\int_{B_R  } \left( (\mathcal C : \nabla \bmf v ) :\nabla \overline {\boldsymbol  \varphi}+m \bmf v \cdot \overline {\boldsymbol  \varphi } \right) \rmd\mathbf  x+\int_{\partial B_R}\mathcal T_1 \mathbf v\cdot\overline {\boldsymbol  \varphi}\rmd \sigma -\eta\int_{\partial \Omega }  \bmf v \cdot \overline {\boldsymbol  \varphi}  \rmd \sigma ,
	\end{aligned}
	$$
	and 
	\begin{equation}
		a_2(\bmf v,\boldsymbol {\varphi})=-\int_{B_R \backslash \overline \Omega } (m+\omega^2)\bmf v \cdot \overline {\boldsymbol  \varphi }  \rmd\mathbf  x+\int_{\partial B_R}\mathcal T_2 \mathbf v\cdot\overline {\boldsymbol  \varphi}\rmd \sigma -\int_{ \Omega }  (m+\omega^2 q)  \bmf v \cdot \overline {\boldsymbol  \varphi}  \rmd \mathbf  x ,
	\end{equation}
	where $m$ is a sufficient large positive constant,  $\mathcal{T}_1$ and $\mathcal{T}_2$ are the operator defined in Lemma \ref{lem:T decompose}. According to the boundedness of $\mathcal{T}_1$ and the trace theorem, we know that $a_1$ is bounded. Hence, using the Riesz representation theorem, it yields that 
	$$
	a_1(\bmf{v}, \boldsymbol{\varphi})=\left(A_1 \bmf v, \boldsymbol{\varphi}\right) \text { for all } \boldsymbol{\varphi} \in H^1\left(B_R\right) .
	$$
	where  $A_1: H^1\left(B_R\right) \rightarrow H^1\left(B_R\right)$ is a bounded linear operator.  By Korn inequality, the uniform Legendre ellipticity condition of $\mathcal C$ and Lemma \ref{lem:T decompose}, for all $\bmf{v} \in H^1\left(B_R\right)$,
	$$
	\begin{aligned}
		\Re\left[a_1(\bmf{v}, \bmf{v})\right] & \geqslant \widetilde{C} \|\bmf v\|_{H^1\left(B_R \right)}^2-\int_{\partial B_R} \mathcal{T}_1 \bmf{v} \bar{\bmf{v}} \mathrm{d} s-\eta\int_{\partial \Omega} \bmf v \bmf{\bar{v}} \mathrm{d} \sigma \\
		& \geqslant \widetilde{C} \|\bmf v\|_{H^1\left(B_R \backslash \bar{\Omega}\right)}^2+c\|\bmf v\|_{H^{\frac{1}{2}}\left(\partial B_R\right)}^2 \\
		& \geqslant\widetilde{C} \|\bmf v\|_{H^1\left(B_R\right)}^2,
	\end{aligned}
	$$
where $\widetilde{C}$ is a positive constant not depending on $\mathbf v$. Hence $a_1$ is strictly coercive. From the Lax-Milgram theorem, we know that the operator $A_1: H^1\left(B_R\right) \rightarrow H^1\left(B_R\right)$ has a bounded inverse. Similarly, by the Riesz representation theorem, there exists a bounded linear operator $A_2: H^1\left(B_R\right) \rightarrow H^1\left(B_R\right)$ fulfilling 
	$$
	a_2(\bmf v, \boldsymbol{\varphi})=\left(A_2 \bmf v, \boldsymbol{\varphi}\right) \quad \text { for all } \boldsymbol{\varphi} \in H^1\left(B_R\right) .
	$$ 
 By Rellich's embedding theorem we know that the embedding of $H^1\left(B_R\right)$ into $L^2\left(B_R\right))$ is compact. Since $\mathcal{T}_2$ is compact, it yields that $A_2$ is compact.  Using Theorem \ref{thm:41 unique}, we know that $A_1+A_2$ is of Fredholm type with index 0.  According to Riesz-Fredholm theory, we prove that \eqref{eq:a(v,var)=b} has  an unique solution. The estimate \eqref{eq:estimate of bmf v} holds by noting that $A_1$ is bounded and coercive, and $A_2$ is compact.

	
	The proof is complete. 
\end{proof}
\begin{rem}\label{rem:21}
	Recall that $\bmf f$ and $\bmf g$ are defined in \eqref{eq:fg}. By \eqref{eq:estimate of bmf v}, for the unique solution $\bmf u^{sc}\in H^1_{\rm loc}(\mathbb R^2)$ to  \eqref{eq:app1} to \eqref{eq:app4}, it yields that
\begin{align}\notag
	&\|\bmf{u}^{s}\|_{H^1\left(B_R\right)} \leqslant C\left(\omega^2 \left\|(1-q)\bmf u^i\right\|_{L^2(\Omega)}+\|\eta \bmf u^i\|_{H^{-1 / 2}(\partial \Omega)}\right)\\
	&=\left(\omega^2\|1-q\|_{L^\infty (\Omega )}+\|\eta\|_{L^\infty(\partial \Omega )}\right)\left\|\bmf  u^i\right\|_{L^2(\Omega)}. \notag
\end{align}
	\end{rem}

\section{Geometrical and mathematical setups for the inverse problem}\label{sec:Geometrical and mathematical setups}

In this section, we first introduce the geometric and mathematical setups for subsequent analysis. The definition below gives the polygonal-nest and polygonal-cell elastic scatterers. Following this, we will present the corresponding generalized transmission conditions and inverse problem of these two kinds of elastic layered media.

\begin{defn} \label{def2.1} 
	$\Omega $  is considered to have a partition made up of a polygonal-nest structure, if there are $\Pi_\zeta, \zeta = 1,2, \cdots ,N, N \in \mathbb{N}$, where  $\{ \Pi _\zeta \}_{\zeta=1}^N$ fulfills the following conditions:
	\begin{itemize}
		\item [1.] Each $ \Pi _\zeta $  is a convex polygon that is open, bounded and simply connected.
		\item [2.] $\Pi _N \Subset \Pi _{N-1} \Subset\cdots\Subset \Pi _2 \Subset\Pi _1 = \Omega$.
	\end{itemize}
\end{defn}

\begin{defn} \label{def2.2}
	$\Omega $ is considered to have a partition made up of  a polygonal-cell structure, if there are $\Pi _\zeta, \zeta = 1,2, \cdots ,N, N \in \mathbb{N}$, where $\{ \Pi _\zeta\}_{\zeta=1}^N $ fulfills the following conditions:
	\begin{itemize}
		\item [1.] Each $ \Pi _\zeta $ is an open and bounded simply-connected polygon.
		\item [2.]  $\Pi _\zeta\subset \Omega $, $\bigcup\nolimits_{\zeta = 1}^N \overline \Pi_\zeta =\overline \Omega$ and $\Pi _\zeta \cap\Pi _{\hat\zeta} =\emptyset $ if  $ \zeta \ne \hat\zeta $. 
		\item [3.] For each $ \Pi_\zeta $, there  is at least one vertex  $\bmf{x}_{c,\zeta}$ such that $\Gamma _\zeta^\pm\subset\partial\Omega $, where $\Gamma _\zeta^\pm$ are the two adjacent   edges of $\partial\Pi_\zeta$ associated with $\bmf {x}_{c,\zeta}$;
	\end{itemize}
\end{defn}
\begin{figure}
	\begin{minipage}[H]{0.55\linewidth}\label{fig1:a}
		\centering
		\includegraphics[width=0.5\linewidth]{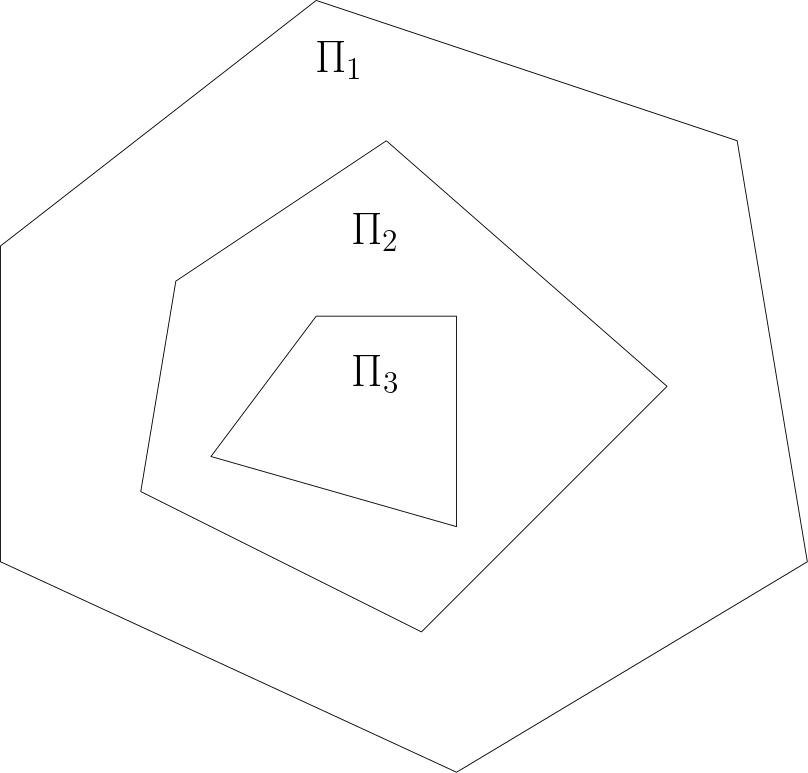}
		\caption{Polygonal-nest geometry}\label{Fig1}
		
	\end{minipage}%
	\begin{minipage}[H]{0.55\linewidth}\label{fig2:b}
		\centering
		\includegraphics[width=0.7\linewidth]{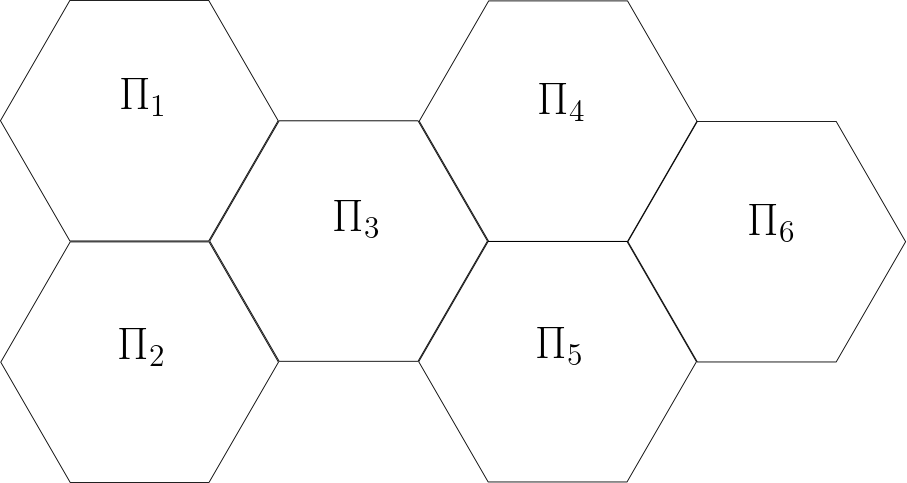}
		\caption{Polygonal-cell geometry}
		
	\end{minipage}
\end{figure}
\begin{defn} \label{def2.3} 
	Suppose that $(\Omega ;\lambda,\mu, q,\eta)$ is an elastic scatterer that possesses a piecewise polygonal-nest structure, which fulfills the following conditions:
	\begin{itemize}
		\item [1.] $\Omega $ has a polygonal-nest partition as described in Definition \ref{def2.1}.
		
		\item [2.] Let $(\mathcal{H}_\zeta;q_\zeta,\eta_\zeta) $  ($ \zeta=1,2, \cdots,N $) denote  an elastic medium $ \mathcal{H}_\zeta: = \Pi _\zeta\backslash\overline\Pi_{\zeta+1}  $  with  medium configurations $ q_\zeta \in \mathbb{R}$ in  $\mathcal{H}_\zeta $ and $ \eta_\zeta \in \mathbb{R} $ on $ \partial\Pi _\zeta $, where each  $ \Pi_\zeta $ is an elastic scatterer and   $ \Pi_{\zeta + 1}:=\emptyset $. Namely, $q\big  |_{ \mathcal{H}_\zeta }= q_\zeta $ and $\eta \big |_{\partial \Pi_{\zeta} }= \eta_\zeta$. 
	\end{itemize}
\end{defn}

The schematic illustration of  a scatter $\Omega$ with a polygonal-nest structure is displayed in Figure \ref{Fig1}.  As per Definition \ref{def2.3}, a polygonal-nest elastic scatterer $(\Omega ;\lambda,\mu, q,\eta)$ can be expressed as. 
\begin{equation} \label{eq:scatter1}
	(\Omega ;\lambda,\mu, q,\eta )=\bigcup\limits_{\zeta = 1}^N (\mathcal{H}_\zeta;q_\zeta,\eta _\zeta) , 
\end{equation}
where
\begin{equation}\label{eq:scatter2} 
	\Omega=\bigcup_{\zeta = 1}^N \mathcal{H}_\zeta ,\quad q=\sum_{\zeta=1}^Nq_\zeta\chi_{\mathcal{H}_\zeta},\quad \eta=\sum_{\zeta=1}^N {\eta_\zeta}\chi_{\partial\Pi_\zeta}.
\end{equation}
Consider the elastic scattering problem \eqref{eq:syst2} associated with a polygonal-nest elastic  scatterer as defined by \eqref{eq:scatter1}. The generalized transmssion boundary conditions are adjusted as follows:
\begin{equation}\label{eq:trans con}
	\bmf{u}_\zeta|_{\partial \Pi_{\zeta+1}}=\bmf{u}_{\zeta+1}|_{\partial\Pi_{\zeta+1}},\ (T_\nu\bmf {u}_\zeta+\eta_{\zeta+1}\bmf {u}_\zeta)|_{\partial\Pi _{\zeta+1}}=T_\nu\bmf {u}_{\zeta+1}|_{\partial\Pi _{\zeta+1}},
\end{equation}
where $\bmf {u}_\zeta=\bmf {u}|_{\mathcal{H}_\zeta},\zeta =0,1,2, \cdots ,N-1$ and $\mathcal{H}_0:=\mathbb{R}^2\backslash\overline\Pi_\zeta$. Using the variational approach, the well-posedness of \eqref{eq:syst2} associated with the polygonal-nest elastic scatterer $(\Omega ;\lambda, \mu, q,\eta)$ can be demonstrated. As we focus on the corresponding inverse problem in this paper, we assume that there is a unique solution $\mathbf u\in H_{\rm loc }^1(\mathbb R^2 )^2$ of \eqref{eq:syst2}, given that $(\Omega ;\lambda, \mu, q,\eta)$ describes a polygonal-nest elastic scatterer.  According to \eqref{eq:ip1}, the inverse problem related to \eqref{eq:syst2} associated with a  polygonal-nest elastic scatterer as described by \eqref{eq:scatter1} can be formulated by
\begin{equation}\label{eq:inverse p1}
	\mathcal{F}(\bigcup_{\zeta=1}^N (\mathcal{H}_\zeta;q_\zeta,\eta_\zeta)) = \bmf {u}_t^\infty (\hat {\bmf x};\bmf{u}^i),\ \hat{\bmf x}\in{\mathbb{S}^1}. 
\end{equation}
In this paper, we aim to determine the configuration of $\mathcal{H}_\zeta$, as well as the corresponding physical parameters $q_\zeta$ and $\eta_\zeta$, using the information obtained from a single far field measurement $\bmf {u}_t^\infty (\hat {\bmf x};\bmf{u}^i)$. This implies that  $\bmf {u}_t^\infty (\hat {\bmf x};\bmf{u}^i)$ is measured at a fixed frequency $\omega$ and with a fixed incident wave $\bmf{u}^i$.

\medskip


In the subsequent discussion, we will examine the issue of an elastic scattering  concerning the problem \eqref{eq:syst2} associated with an elastic scatterer of a polygonal-cell structure. Firstly, we will give the definition of an elastic scatterer of a piecewise polygonal-cell structure.
\begin{defn}\label{def2.4}
The elastic scatterer $(\Omega;\lambda,\mu, q,\eta)$ is said to have a piecewise polygonal-cell structure, if it fulfills the following conditions:
	\begin{itemize}
		\item [1.] $\Omega$ has   a partition with a polygonal-cell structure as described in Definition \ref{def2.2}.
		\item [2.] Suppose that each elastic medium $(\Pi_\zeta;q_\zeta,\eta^*)$ for $\zeta=1,2, \cdots,N $ has the constant configuration parameters $q_\zeta \in \mathbb{R}$ in $\Pi_\zeta$ and $\eta^* \in \mathbb{R}$ on $\partial \Pi_\zeta$. 
	\end{itemize}
\end{defn}
Similar to  \eqref{eq:scatter1}, according to Definition \ref{def2.4}, an elastic scatterer $(\Omega;\lambda,\mu,q,\eta)$ with a polygonal-cell structure can be expressed as
\begin{equation}\label{eq:scatter3}
	(\Omega;\lambda,\mu,  q,\eta)=\bigcup_{\zeta=1}^N (\Pi_\zeta;q_\zeta,\eta^*),
\end{equation}
where 
\begin{equation} \label{eq:scatter4}
	\Omega=\bigcup_{\zeta=1}^N \Pi_\zeta,\quad q =\sum_{\zeta=1}^N q_\zeta\chi _{\Pi_ \zeta},\quad \eta=\sum_{\zeta=1}^N \eta^*\chi_{\partial\Pi _\zeta}.
\end{equation}
For the elastic wave scattering defined in \eqref{eq:syst2} related to an elastic scatterer with a polygonal-cell structure as defined in \eqref{eq:scatter3}, we define $\bmf{u}_\zeta =\bmf{u}|_{\Pi_\zeta}$, where $\zeta = 0,1,2,\cdots,N-1$ and $\Pi_0:=\mathbb{R}^2\backslash \overline \bigcup_{\zeta= 1}^N \Pi_\zeta$. It is clear to see that the boundary of the set $ \partial\Pi_0 $ is the same as the boundary of the outer set $ \partial\Omega $. This pertains to the specific geometry of a scatterer made up of polygonal cells. Moving forward, we will examine how interior transmission works across the internal boundaries.  Let us assume that $ \Pi_{\zeta_1} $ and $ \Pi_{\zeta_2 }$ are neighboring cells and that their intersection is defined as $ \Gamma$, where $ \zeta_2>\zeta_1 $.  In light of \eqref{eq:scatter4}, and through some suitable amendments from \eqref{eq:trans con}, one can express the generalized transmission condition with respect to a polygonal-cell elastic scatterer as
\begin{equation}\label{eq:trans con2}
	\bmf {u}_{\zeta_1}|_{\partial \Pi_{\zeta_2}}=\bmf {u}_{\zeta_2}|_{\partial\Pi_{ \zeta_2}},\ (T_\nu\bmf {u}_{\zeta_1}+\eta^*\bmf {u}_{\zeta_1})|_{\partial \Pi _{\zeta_2}}=T_\nu\bmf {u}_{\zeta_2}|_{\partial\Pi_{\zeta_2}},
\end{equation}
where $ \Pi_{\zeta_1}  $ and $ \Pi_{\zeta_2 }$ are two adjacent cells, with $\zeta_2>\zeta_1$ and $\partial\Pi _{\zeta_2}=\Pi _{\zeta_1} \cap \Pi_{\zeta_2}$. The direct and inverse problems of an elastic scatterer with a piecewise polygonal-cell structure can be formulated in a similar manner to that of a piecewise polygonal-nest structure. Additionally, the inverse problem of a piecewise polygonal-cell elastic scatterer can be formulated as follows:
\begin{equation}\label{eq:inver pro2}
	\mathcal{F}(\bigcup_{\zeta=1}^N (\Pi_\zeta;q_\zeta,\eta^*))=\bmf {u}_t^\infty (\hat {\bmf x};\bmf{u}^i),\ \hat {\bmf x} \in {\mathbb{S}^1}.
\end{equation} 

\section{Auxiliary lemmas}\label{sec:Auxiliary lemmas}
In this section, we will establish various lemmas for subsequent analysis. First, we introduce geometric notation for a planar corner, which will be frequently used in our analysis.

Let $(r,\theta)$ be the polar coordinates in $\mathbb R^2$. Namely, for $\bmf {x}=(x_1,x_2)^\top$, the polar coordinate of $\bmf x$ is given by $\bmf x = (r\cos\theta,r\sin\theta)^\top$. Denote an open sector $W\subseteq{\mathbb R^2}$ and its boundary $\Gamma^\pm $ as follows:
\begin{equation}\begin{aligned}\label{eq:sign1}
		W &= \{ \bmf{x}\in\mathbb R^2|\bmf{x}\ne\bmf{0},\theta_m< \mathrm{arg} (x_1+{\bsi}x_2)<\theta _M\}, \\
		\Gamma^+&=\{ \bmf{x}\in \mathbb R^2|\bmf{x}\ne\bmf{0}, \mathrm{arg} (x_1+{\bsi}x_2)=\theta_M \},\\
		\Gamma^-&=\{ \bmf{x}\in \mathbb R^2|\bmf{x}\ne\bmf{0},  \mathrm{arg} (x_1+{\bsi}x_2)=\theta_m \},
\end{aligned}\end{equation}
where $-\pi<\theta_m<\theta_M<\pi$. Here $\theta_M-\theta_m$ is the opening angle of $W$. Let $B_h$ is an open disk centered at $ \bmf 0$ with radius $ h\in \mathbb R_+ $ in the remaining of the paper, set
\begin{equation}\label{eq:sign2}
	S_h=W\cap B_h,\ \Gamma_h^\pm=\Gamma^\pm\cap B_h,\ \overline {S_h}= \overline W \cap B_h,\  \Lambda_h=S_h \cap \partial B_h.
\end{equation}

Since $\mathcal L$ is invariant under rigid motion, it is worth noting that the sector $ S_h $ represents a region in the vicinity of a corner vertex of a polygonal elastic scatterer for the remainder of the paper. Consequently, we can assume that $h\in\mathbb{R}_+$ is sufficiently small so that $S_h$ is completely contained in the elastic scatterer. Additionally, $\Gamma_h^\pm$ lie completely on the two edges associated with the elastic scatterer. To establish unique identifiability results, we will use the following complex geometric optics (CGO) solution in the following analysis.

\begin{lem}\label{lem 3.1}
	Let the convex sectorial corner $S_h$ be defined by \eqref{eq:sign2} satisfying the opening  angle $\theta_M-\theta_m\in (0,\pi)$. Then there  exist unit vectors $\mathbf d\in \mathbb S^1 $ and $\mathbf d^\perp \in \mathbb S^1 $ such that
	\begin{equation}\label{eq:d cond}
		-1\leq \mathbf d\cdot \hat{\mathbf x}\leq -\delta_W <0 \quad \mbox{for  all} \quad \mathbf x \in S_h, \quad \mbox{and} \quad \mathbf d\cdot \mathbf d^\perp=0, 
	\end{equation}
	where $\hat{\mathbf x}=\mathbf x/|\mathbf x| $ and $\delta_W $ is a positive constant only depending on $W$. 
	Denote
	\begin{equation}\label{eq:lame1}
		\bmf {u}_0(\bmf{x})=\left(\begin{array}{l}
			1\\
			{\bsi} 
		\end{array} \right) \exp(\boldsymbol{\rho} \cdot \mathbf x ) \buildrel \Delta \over =\left( \begin{array}{l}
			{\hat u_1}(\bmf {x})\\
			{\hat u_2}(\bmf {x})
		\end{array} \right),\quad \boldsymbol{\rho}=s \left(\mathbf  d+\bsi \mathbf d^\perp\right) , \quad\mathrm i=\sqrt{-1},
	\end{equation}
	where $s\in\mathbb R_+$. Then
	\begin{equation}\label{eq:u0 cond}
		\mathcal L \mathbf u_0=0\quad\mbox{in}\quad \mathbb R^2. 
	\end{equation}
	Recall that $ W $  is defined in \eqref{eq:sign1}. Let 
	\begin{align}\label{eq:d notation}
		\mathbf d=(\cos\theta_d,\sin \theta_d )^\top \quad
		\mbox{and}\quad \mathbf d^\perp=(-\sin \theta_d, \cos \theta_d )^\top 
	\end{align}
	satisfy \eqref{eq:d cond}.  One has
	\begin{equation}\label{eq:lame2}
		\int_W \hat u_1(s\bmf {x})\rmd\bmf{x}= 0.5\bsi e^{2\bsi \theta_d } ( e^{-2\bsi \theta_M } -e^{-2\bsi \theta_m }  )  s^{-2}
	\end{equation}
	and
	\begin{equation}\label{eq:lame3}
		\int_W |\hat u_j(\bmf {x})| |\bmf{x}|^\alpha \rmd\bmf{x}\le\frac{(\theta_M-\theta_m)\Gamma (\alpha+2)}{\delta_W^{\alpha+2}}s^{-\alpha-2},\quad j=1,2,
	\end{equation}
	as $s\rightarrow +\infty$. 
	Moreover, for $\alpha\geq 0$ and $j\in\{1,2\}$,  the following integrals can be estimated
	
	and
	\begin{equation}\label{eq:lame4}
		\int_{W\backslash S_h} |\hat u_j(\bmf{x})| |\mathbf x|^{\alpha} \rmd\bmf{x}\le\frac{2(\theta_M-\theta _m)}{\delta_W}s^{-1}e^{-\delta_W s h/2}
	\end{equation}
	as $s\rightarrow +\infty$. The following boundary integrals on $\Gamma_h^\pm $ satisfy 
	\begin{align}\label{eq:u1 boundary}
		\int_{\Gamma_h^+ }\hat u_1(\bmf {x})\rmd\sigma
		&=-e^{\bsi(\theta_d-\theta_M )}s^{-1}-{\sf R({\theta_M})},\quad
		\int_{\Gamma_h^- }\hat u_1(\bmf {x})\rmd\sigma 
		=-e^{\bsi(\theta_d-\theta_m )}s^{-1}-{\sf R({\theta_m})},
	\end{align}
	where 
	$$
	{\sf R(\theta_M)}=\int_{h}^{+\infty} e^{s r \exp(\bsi (\theta_M-\theta_d))   } \rmd r ,\quad {\sf R(\theta_m)}=\int_{h}^{+\infty} e^{s r \exp(\bsi (\theta_m-\theta_d))   } \rmd r 
	$$
	satisfying $ |{\sf R(\theta_M) }| \leq 2/\delta_W e^{-h \delta_W/2}$ and $ |{\sf R(\theta_m) }| \leq 2/\delta_W e^{-h \delta_W/2}$. 
\end{lem}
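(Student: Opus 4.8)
The plan is to reduce every assertion to an explicit computation, after first fixing the geometry and checking that $\mathbf{u}_0$ solves the Lam\'e system. For the cone condition \eqref{eq:d cond} I would take $\mathbf{d}$ pointing opposite to the angular bisector of $W$, i.e.\ $\theta_d=(\theta_M+\theta_m)/2+\pi$ in the notation of \eqref{eq:d notation}. Since the half-opening angle $(\theta_M-\theta_m)/2$ is strictly less than $\pi/2$, for every $\mathbf{x}\in S_h$ with argument $\theta\in(\theta_m,\theta_M)$ we get $\mathbf{d}\cdot\hat{\mathbf{x}}=\cos(\theta-\theta_d)\le-\cos((\theta_M-\theta_m)/2)$, so \eqref{eq:d cond} holds with $\delta_W=\cos((\theta_M-\theta_m)/2)>0$, and $\mathbf{d}^\perp$ is then forced by orthogonality. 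For \eqref{eq:u0 cond} I would use $\mathbf{u}_0=\mathbf{a}\,e^{\boldsymbol{\rho}\cdot\mathbf{x}}$ with the constant vector $\mathbf{a}=(1,\mathrm{i})^\top$, noting that $\boldsymbol{\rho}=s\,e^{-\mathrm{i}\theta_d}(1,\mathrm{i})^\top$. Two algebraic identities then do the work: $\boldsymbol{\rho}\cdot\boldsymbol{\rho}=s^2(|\mathbf{d}|^2-|\mathbf{d}^\perp|^2+2\mathrm{i}\,\mathbf{d}\cdot\mathbf{d}^\perp)=0$ forces $\Delta\mathbf{u}_0=(\boldsymbol{\rho}\cdot\boldsymbol{\rho})\mathbf{u}_0=\mathbf{0}$, while $\boldsymbol{\rho}\cdot\mathbf{a}=s\,e^{-\mathrm{i}\theta_d}(1+\mathrm{i}^2)=0$ forces $\nabla\cdot\mathbf{u}_0=0$ and hence $\nabla(\nabla\cdot\mathbf{u}_0)=\mathbf{0}$; substituting into $\mathcal{L}=\mu\Delta+(\lambda+\mu)\nabla(\nabla\cdot)$ gives $\mathcal{L}\mathbf{u}_0=\mathbf{0}$.

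The heart of the argument is the master identity obtained by passing to polar coordinates $\mathbf{x}=r(\cos\theta,\sin\theta)^\top$, namely $\boldsymbol{\rho}\cdot\mathbf{x}=s\,r\,e^{\mathrm{i}(\theta-\theta_d)}$, from which $|\hat{u}_1(\mathbf{x})|=|\hat{u}_2(\mathbf{x})|=e^{s r\,\mathbf{d}\cdot\hat{\mathbf{x}}}\le e^{-s\delta_W r}$ by \eqref{eq:d cond}. With this, \eqref{eq:lame2} follows from the exact radial integral $\int_0^\infty r\,e^{cr}\,\mathrm{d}r=c^{-2}$ (valid for $\Re c<0$, here $c=s\,e^{\mathrm{i}(\theta-\theta_d)}$) followed by the elementary angular integral $\int_{\theta_m}^{\theta_M}e^{-2\mathrm{i}(\theta-\theta_d)}\,\mathrm{d}\theta=\tfrac{\mathrm{i}}{2}e^{2\mathrm{i}\theta_d}(e^{-2\mathrm{i}\theta_M}-e^{-2\mathrm{i}\theta_m})$. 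The estimate \eqref{eq:lame3} is then immediate: replace $|\hat{u}_j|$ by $e^{-s\delta_W r}$ and use $\int_0^\infty r^{\alpha+1}e^{-s\delta_W r}\,\mathrm{d}r=\Gamma(\alpha+2)(s\delta_W)^{-(\alpha+2)}$ together with $\int_{\theta_m}^{\theta_M}\mathrm{d}\theta=\theta_M-\theta_m$.

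For the exterior estimate \eqref{eq:lame4}, since $W\setminus S_h=\{r\ge h\}\cap W$, I would split $e^{-s\delta_W r}\le e^{-s\delta_W h/2}\,e^{-s\delta_W r/2}$ on $r\ge h$, extract the factor $e^{-s\delta_W h/2}$, and bound the remaining polynomial-times-exponential tail $\int_h^\infty r^{\alpha+1}e^{-s\delta_W r/2}\,\mathrm{d}r=O(s^{-(\alpha+2)})$, which for $\alpha\ge0$ is dominated by the stated $s^{-1}$ rate as $s\to+\infty$. For the boundary integrals \eqref{eq:u1 boundary}, on $\Gamma_h^+$ (parametrized by $r\in(0,h)$ along $\theta=\theta_M$, with $\mathrm{d}\sigma=\mathrm{d}r$) I would write $\int_0^h e^{cr}\,\mathrm{d}r=\int_0^\infty e^{cr}\,\mathrm{d}r-\int_h^\infty e^{cr}\,\mathrm{d}r$ with $c=s\,e^{\mathrm{i}(\theta_M-\theta_d)}$; the first equals $-c^{-1}=-e^{\mathrm{i}(\theta_d-\theta_M)}s^{-1}$ and the second is exactly $\mathsf{R}(\theta_M)$, giving \eqref{eq:u1 boundary}, and the bound $|\mathsf{R}(\theta_M)|\le\int_h^\infty e^{\Re(c)r}\,\mathrm{d}r\le\int_h^\infty e^{-s\delta_W r}\,\mathrm{d}r=(s\delta_W)^{-1}e^{-s\delta_W h}$ is $\le\tfrac{2}{\delta_W}e^{-\delta_W h/2}$ for large $s$. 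The ray $\Gamma_h^-$ is identical with $\theta_m$ replacing $\theta_M$.

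The computations are all elementary once the master identity $\boldsymbol{\rho}\cdot\mathbf{x}=s r\,e^{\mathrm{i}(\theta-\theta_d)}$ is established, so the main difficulty is not conceptual but one of careful bookkeeping with complex exponentials, guaranteeing convergence of every radial integral through the uniform bound $\Re(\boldsymbol{\rho}\cdot\mathbf{x})\le-s\delta_W r$, and matching the deliberately loose constants and half-exponents appearing in the asymptotic claims \eqref{eq:lame3}--\eqref{eq:u1 boundary}, all of which are to be read in the regime $s\to+\infty$.
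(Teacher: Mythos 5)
Your proposal is correct and follows essentially the same route as the paper: the identity $\boldsymbol{\rho}\cdot\boldsymbol{\rho}=0$ together with $\hat u_2=\mathrm{i}\hat u_1$ for \eqref{eq:u0 cond}, then polar coordinates with the Laplace-transform integrals $\int_0^\infty r^\alpha e^{-\gamma r}\,\rmd r=\Gamma(\alpha+1)\gamma^{-\alpha-1}$ and the tail bound for \eqref{eq:lame2}--\eqref{eq:u1 boundary}. The only difference is that you make explicit the bisector choice $\theta_d=(\theta_M+\theta_m)/2+\pi$ with $\delta_W=\cos((\theta_M-\theta_m)/2)$, which the paper leaves as "direct to see" from convexity.
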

\begin{proof}  
	Since $S_h$ is convex, it can be direct to see that \eqref{eq:d cond} can be fulfilled.  By the definition of $\boldsymbol{\rho}$ given in \eqref{eq:d cond}, one has $\boldsymbol{\rho} \cdot \boldsymbol{\rho}=0$, which implies that 
	\begin{align}\label{eq:u1 delta}
		\Delta \hat u_1 (\mathbf x) =0\quad \mbox{in} \quad  \mathbb R^2. 
	\end{align}  
	By  the definition  of $\mathbf u_0$,  we know that  $\hat u_2=\bsi \hat u_1$. Hence by directly calculations it yields that
	\begin{align}\notag
		\nabla (\nabla \cdot \mathbf u_0 )=( \rho_1\hat u_1+\rho_2 \hat u_2 )\boldsymbol{\rho}=\mathbf 0\quad   \mbox{in}  \quad \mathbb R^2,\quad \boldsymbol{\rho}=s\exp(-\bsi \theta_d )\begin{bmatrix}
			1\\ \bsi 
		\end{bmatrix}=\begin{bmatrix}
			\rho_1 \\ \rho_2
		\end{bmatrix},
	\end{align}
	which implies that $\mathcal L u_0=\mathbf 0$ in $\mathbb R^2$ by noting \eqref{eq:u1 delta} and the definition of the Lam\'e  operator $\mathcal  L$.

	For any given $\alpha\in \mathbb R_+\cup\{0\}$ and $\varepsilon\in \mathbb R_+ $, if $\Re(\gamma )\gg 1$ with $\gamma \in \mathbb C$, 		using Laplace transform, one can obtain that
	\begin{equation}\label{eq:estgam}
		\int_{0}^{+\infty} r^\alpha e^{-\gamma r}\mathrm d\mathrm r
		=\frac{\Gamma(\alpha+1)}{\gamma ^{\alpha +1}},\quad \left|\int_{\varepsilon}^\infty r^\alpha e^ {-\gamma r}\mathrm d\mathrm r\right|\leq \frac{2}{\Re{\gamma}}e^{-\varepsilon\Re{\gamma} /2} ,
	\end{equation}
	where $\Gamma$ is the Gamma function. Therefore, according to \eqref{eq:estgam},  for \eqref{eq:lame2} we can deduce that
	\begin{align}
		\int_W \hat u_1(\bmf {x})\rmd\bmf{x}&=\int_0^\infty  \int_{\theta_m}^{\theta_M} \exp(  s r\exp(\bsi(\theta-\theta_d ) )) r\rmd   r \rmd \theta=\int_{ \theta_m }^{ \theta_M }  \frac{\Gamma(2) }{ s^2\exp(2  \bsi (\theta-\theta_d ) ) } \rmd \theta\notag \\
		&=\frac{\bsi e^{2\bsi \theta_d } ( e^{-2\bsi \theta_M } -e^{-2\bsi \theta_m }  )}{2 s^2}.\notag
	\end{align}
	Using the similar argument for \eqref{eq:lame2}, we can derive \eqref{eq:lame3}. By  virtue of \eqref{eq:estgam} and polar  coordinate transformation we can obtain \eqref{eq:lame4}. Analogously we  adopt the polar transformation and utilize  \eqref{eq:estgam} to prove \eqref{eq:u1 boundary}.
\end{proof}

\begin{lem}\label{lem 3.2}
	Let  ${\Lambda _h}$  be given in \eqref{eq:sign1} and  ${\bmf u}_0({\bmf x})$ be defined in  \eqref{eq:lame1}. Recall that ${\delta _W}>0$ is given in Lemma \ref{lem 3.1}. It yields that
	\begin{subequations}
		\begin{align}
			\| {\bmf {u}_0} \|_{H^1{{(\Lambda _h)}^2}}&\le\sqrt{2h(1+2s^2)}\sqrt {\theta_M-\theta_m} e^{-s h\delta_W},\label{eq:u H1}\\
			\| T_\nu\bmf {u}_0 \|_{L^2{(\Lambda_h)}^2} &\le 2\mu s\sqrt {3(\theta_M-\theta_m)}e^{-s h \delta_W}\label{eq:Tu L2},
		\end{align}
	\end{subequations}
	as $s\rightarrow +\infty$. 	
\end{lem}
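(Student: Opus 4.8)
The plan is to reduce both inequalities to the single pointwise bound $|\hat u_1(\bmf x)|\le e^{-s\delta_W r}$ on the sector, which comes directly from the convexity-derived direction condition \eqref{eq:d cond}, and then to integrate over the arc $\Lambda_h$, where $r=h$. First I would make the exponential explicit: from \eqref{eq:lame1} and \eqref{eq:d notation} one has $\boldsymbol\rho=s\,e^{-\bsi\theta_d}(1,\bsi)^\top$, so that with $\bmf x=(r\cos\theta,r\sin\theta)^\top$ the scalar factor is $\hat u_1(\bmf x)=\exp(\boldsymbol\rho\cdot\bmf x)=\exp\!\big(sr\,e^{\bsi(\theta-\theta_d)}\big)$ and hence $|\hat u_1(\bmf x)|=\exp\!\big(sr\cos(\theta-\theta_d)\big)$. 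Since $\cos(\theta-\theta_d)=\mathbf d\cdot\hat{\bmf x}\le-\delta_W$ by \eqref{eq:d cond}, this gives $|\hat u_1(\bmf x)|\le e^{-s\delta_W r}$, and on $\Lambda_h$ both components satisfy $|\hat u_1|=|\hat u_2|\le e^{-s\delta_W h}$. The arc element on $\Lambda_h$ is $h\,\rmd\theta$, so $\int_{\Lambda_h}|\hat u_1|^2\,\rmd\sigma\le h(\theta_M-\theta_m)e^{-2s\delta_W h}$; this is the single integral that drives everything.

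For \eqref{eq:u H1} I would compute the gradient entrywise. Because $\partial_j\hat u_i=\rho_j\hat u_i$ with $|\rho_1|=|\rho_2|=s$, the Frobenius norm satisfies $\|\nabla\bmf u_0\|^2=4s^2|\hat u_1|^2$, while $|\bmf u_0|^2=2|\hat u_1|^2$. Bounding the tangential derivative along the arc by the full gradient, $\|\bmf u_0\|_{H^1(\Lambda_h)^2}^2\le\int_{\Lambda_h}\big(|\bmf u_0|^2+\|\nabla\bmf u_0\|^2\big)\,\rmd\sigma=2(1+2s^2)\int_{\Lambda_h}|\hat u_1|^2\,\rmd\sigma$, and inserting the arc integral above and taking a square root yields exactly \eqref{eq:u H1}.

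For \eqref{eq:Tu L2} the key simplification is that $\nabla\cdot\bmf u_0=(\rho_1+\bsi\rho_2)\hat u_1=0$ — this is the same cancellation already used in the proof of Lemma \ref{lem 3.1} to get $\mathcal L\bmf u_0=\mathbf 0$ — so the $\lambda$-term in the traction \eqref{eq:bto} drops and $T_\nu\bmf u_0=2\mu(\nabla^s\bmf u_0)\nu$. A direct computation gives $\nabla^s\bmf u_0=\rho_1\hat u_1\left(\begin{smallmatrix}1&\bsi\\\bsi&-1\end{smallmatrix}\right)$, whence $(\nabla^s\bmf u_0)\nu=\rho_1\hat u_1(\nu_1+\bsi\nu_2)(1,\bsi)^\top$ and $|T_\nu\bmf u_0|\le C\mu s|\hat u_1|$ pointwise (using $|\rho_1|=s$, $|\nu_1+\bsi\nu_2|=1$, and $|(1,\bsi)^\top|=\sqrt2$). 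Integrating this over $\Lambda_h$ with the same arc bound, and absorbing the factor $\sqrt h\le 1$ (recall $h$ is taken small) into the constant, gives \eqref{eq:Tu L2}.

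None of the steps is genuinely hard; the one structural ingredient that makes everything work is the pointwise exponential decay $|\hat u_1|\le e^{-s\delta_W r}$, which is precisely where the convexity of the sector (through \eqref{eq:d cond}) enters, together with the cancellation $\nabla\cdot\bmf u_0=0$ that removes the $\lambda$-contribution from the traction. The remaining effort is only the bookkeeping of constants; in fact the sharp constant in \eqref{eq:Tu L2} is $2\sqrt2$ rather than $2\sqrt3$, so the stated bound follows a fortiori.
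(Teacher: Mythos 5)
Your proposal is correct and follows essentially the same route as the paper: the pointwise decay $|\hat u_1|=e^{sr\,\mathbf d\cdot\hat{\bmf x}}\le e^{-s\delta_W r}$ from \eqref{eq:d cond}, integration over the arc of length $h(\theta_M-\theta_m)$, and the cancellation $\nabla\cdot\bmf u_0=\bmf 0$ to reduce the traction to $2\mu(\nabla^s\bmf u_0)\nu$. The only difference is cosmetic: you factor $(\nabla^s\bmf u_0)\nu=\rho_1\hat u_1(\nu_1+\bsi\nu_2)(1,\bsi)^\top$ exactly, whereas the paper uses a cruder entrywise Cauchy--Schwarz bound (factor $6\mu^2$), which is why your constant comes out slightly sharper; both yield \eqref{eq:Tu L2} for $h$ small.
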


\begin{proof} 
	By virtue of \eqref {eq:lame1}, there holds
	\begin{equation}\label{eq u1norm}
		\|\hat u_1(s\bmf {u}) \|_{L^2(\Lambda _h)^2} \le \sqrt h e^{-sh\delta_W} \sqrt {\theta_M-\theta_m}.
	\end{equation}
	Using \eqref{eq:d cond}, it yields that
	\begin{equation}\label{eq:grad u1 norm}
		\| \nabla \hat u_1(s\bmf {x}) \|_{L^2(\Lambda _h)^2}\le \sqrt 2 |\boldsymbol{\rho}|\left(\int_{\theta_m}^{\theta_M}  h e^{2s h \mathbf d\cdot \mathbf {\hat x} } \rmd \theta\right)^{1/2} \leq s\sqrt{2 h}   e^{-s h \delta_W }\sqrt{\theta_M-\theta_m},
	\end{equation}
	as $s\rightarrow +\infty$. Furthermore, noting $\hat u_2(\bmf {x})={\bsi}\hat u_1(\bmf {x})$, and combing \eqref{eq u1norm} with \eqref{eq:grad u1 norm}, one has \eqref{eq:u H1}.
	
	Using  $\hat u_2(s\bmf {x})={\bsi}\hat u_1(s\bmf {x})$ again, we have $\nabla \cdot \bmf {u}_0=0$. Then, combing \eqref{eq:bto}, we can  deduce that 
	\begin{equation*}
		T_\nu \bmf {u}_0 = \mu \left[ \begin{array}{cc}
			2\frac{\partial \hat u_1}{\partial x_1}&\frac{\partial \hat u_1}{\partial x_2}+\frac{\partial \hat u_2}{\partial x_1}\\
			\frac{\partial \hat u_2}{\partial x_1}+\frac{\partial \hat u_1}{\partial x_2} & 2\frac{\partial \hat u_2}{\partial x_2}
		\end{array}\right] \cdot \nu.
	\end{equation*}
	Therefore, we can conclude that
	\begin{equation*}\begin{aligned}
			\left\| T_\nu \bmf {u}_0 \right\|_{L^2(\Lambda_h)^2}^2 &= \mu^2\int_{\Lambda_h} {\left| 2\frac{\partial \hat u_1}{\partial x_1}\nu_1+\frac{\partial \hat u_1}{\partial x_2}\nu_2+\frac{\partial \hat u_2}{\partial x_1}\nu_2 \right|}^2 \rmd\sigma+\mu^2\int_{\Lambda_h} {\left| \frac{\partial \hat u_2}{\partial x_1}\nu_1+\frac{\partial \hat u_1}{\partial x_2}\nu_1+2\frac{\partial \hat u_2}{\partial x_2}\nu_2 \right|}^2 \rmd\sigma \\
			&\le 6\mu ^2\int_{\Lambda _h} {\left| \frac{\partial \hat u_1}{\partial x_1} \right|}^2\rmd \sigma \le 12s^2\mu^2 e^{-2s h \delta_W}(\theta_M-\theta_m).
	\end{aligned}\end{equation*}
	
	The proof is complete.
\end{proof}

In the subsequent lemma, with appropriate regularity around the corner on the solution of a PDE transmission problem \eqref{eq:lame5}, we shall show that the corresponding solution must vanish at the corner.

\begin{lem}\label{lem 3.3}
	Recall that $S_h$ is defined in \eqref{eq:sign1}. Suppose that $ (\bmf {v},\bmf {w})\in H^2(S_h)^2\times H^1(S_h)^2 $ are a pair of solutions to  the following PDE system: 
	\begin{equation}\label{eq:lame5}
		\begin{cases}
			\mathcal{L}\bmf {w} + {\omega}^2q\bmf {w} = \bmf 0,&\mbox{in}\quad  S_h,\\
			\mathcal{L}\bmf {v} + {\omega}^2\bmf {v} = \bmf 0,&\mbox{in}\quad S_h,\\
			\bmf {w} = \bmf {v},T_\nu \bmf {v}+\eta \bmf {v} = T_\nu \bmf {w},&\mbox{on}\quad \Gamma _h^ \pm, 
		\end{cases}
	\end{equation}
	where $\nu \in \mathbb S^1$ signifies the exterior unit normal vector to $\Gamma_h^\pm $, $ {\omega}\in \mathbb{R}_+ $, $q\in\mathbb R\backslash \{ 0\}$, $\eta$  is a real valued function and $\eta(\bmf {x}) \in C^\alpha(\overline {\Gamma _h^\pm})$ for $ 0<\alpha<1 $. Assumed that either $\eta\equiv 0$ on $\Gamma_h^\pm$ or $\eta (\bmf 0) \ne 0$  when $\eta$ is a variable function on $\Gamma_h^\pm$ . If the following conditions are fulfilled:
	
	(a) the function $\bmf {w} $ is H\"older continuous in $\overline{S_h}$ where $\alpha \in (0,1)$, namely 
	\begin{equation}\label{holder con}
		\bmf {w} \in C^\alpha(\overline {S_h})^2,
	\end{equation}
	
	(b) the angles $\theta_M$ and $\theta_m$ of $ S_h $ satisfy
	\begin{equation}\label{angle}
		-\pi <\theta_m< \theta_M< \pi, \ \theta_M-\theta_m\ne \pi ,
	\end{equation}
	one has 
	\begin{equation}\label{v=w}
		\bmf {v(0)} = \bmf{w(0)} =\bmf{0}.
	\end{equation}
\end{lem}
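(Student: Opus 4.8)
The plan is to probe the corner with the complex geometric optics (CGO) solution $\bmf{u}_0$ of Lemma \ref{lem 3.1} and convert the transmission data on $\Gamma_h^\pm$ into a scalar asymptotic identity in the large parameter $s$. First I would apply the second Betti formula on $S_h$ to the pairs $(\bmf v,\bmf u_0)$ and $(\bmf w,\bmf u_0)$. Since $\mathcal L\bmf u_0=\bmf 0$ while $\mathcal L\bmf v=-\omega^2\bmf v$ and $\mathcal L\bmf w=-\omega^2 q\bmf w$, subtracting the two identities and splitting $\partial S_h=\Gamma_h^+\cup\Gamma_h^-\cup\Lambda_h$ gives
\begin{equation*}
-\omega^2\int_{S_h}(q\bmf w-\bmf v)\cdot\bmf u_0\,\rmd\bmf x=\int_{\Gamma_h^+\cup\Gamma_h^-}\big[(T_\nu\bmf w-T_\nu\bmf v)\cdot\bmf u_0-(\bmf w-\bmf v)\cdot T_\nu\bmf u_0\big]\,\rmd\sigma+I_{\Lambda_h},
\end{equation*}
where $I_{\Lambda_h}$ collects the arc contributions. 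On $\Gamma_h^\pm$ the transmission conditions give $\bmf w-\bmf v=\bmf 0$ and $T_\nu\bmf w-T_\nu\bmf v=\eta\bmf w$, so the boundary integrand collapses to $\eta\bmf w\cdot\bmf u_0$; meanwhile, as $\bmf v,\bmf w$ are fixed (independent of $s$), Lemma \ref{lem 3.2} bounds $I_{\Lambda_h}$ by $Ce^{-sh\delta_W}\mathrm{poly}(s)$, which is exponentially small as $s\to+\infty$.

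Next I would extract the leading behaviour in $s$. Both $\bmf v$ (via $H^2\hookrightarrow C^{0,\gamma}$ in the plane) and $\bmf w$ (hypothesis (a)) are H\"older continuous up to the corner, and $\bmf v=\bmf w$ on $\Gamma_h^\pm$ forces $\bmf v(\bmf 0)=\bmf w(\bmf 0)=:\bmf a$. Writing $\bmf w=\bmf a+O(|\bmf x|^\alpha)$, $\eta=\eta(\bmf 0)+O(|\bmf x|^\alpha)$, using $q\bmf w(\bmf 0)-\bmf v(\bmf 0)=(q-1)\bmf a$ and the polarization $\bmf u_0=(1,\bsi)^\top e^{\boldsymbol\rho\cdot\bmf x}$, the identity becomes, with $\beta:=\bmf a\cdot(1,\bsi)^\top=a_1+\bsi a_2$,
\begin{equation*}
-\omega^2(q-1)\beta\!\int_{S_h}\!\hat u_1\,\rmd\bmf x-\eta(\bmf 0)\beta\!\!\int_{\Gamma_h^+\cup\Gamma_h^-}\!\!\hat u_1\,\rmd\sigma=(\text{exp.\ small})+(\text{H\"older remainders}).
\end{equation*}
By \eqref{eq:lame2} and \eqref{eq:lame4} the volume integral equals $C_V s^{-2}+o(s^{-2})$ with $C_V=\tfrac12\bsi e^{2\bsi\theta_d}(e^{-2\bsi\theta_M}-e^{-2\bsi\theta_m})$, and by \eqref{eq:u1 boundary} the boundary integral equals $C_B s^{-1}+o(s^{-1})$ with $C_B=-(e^{\bsi(\theta_d-\theta_M)}+e^{\bsi(\theta_d-\theta_m)})$; the remainders are $O(s^{-\alpha-2})$ and $O(s^{-\alpha-1})$ by \eqref{eq:lame3}.

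Now I would run the two regimes permitted by the hypotheses. If $\eta(\bmf 0)\neq 0$, the $s^{-1}$ boundary term dominates: multiplying by $s$ and sending $s\to+\infty$ annihilates the volume term, the remainders and $I_{\Lambda_h}$, leaving $\eta(\bmf 0)C_B\,\beta=0$, hence $\beta=0$. If instead $\eta\equiv 0$, only the $s^{-2}$ volume term survives; multiplying by $s^2$ yields $\omega^2(q-1)C_V\,\beta=0$, hence $\beta=0$ whenever the contrast $q\neq1$. In both regimes the step $\beta=0$ rests on $C_B\neq0$, resp.\ $C_V\neq0$, which is exactly where the angle restriction \eqref{angle} enters: these coefficients vanish precisely when $e^{\bsi(\theta_m-\theta_M)}=-1$, resp.\ $e^{-2\bsi(\theta_M-\theta_m)}=1$, i.e.\ when $\theta_M-\theta_m=\pi$.

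Finally, since $a_1,a_2$ are complex, $\beta=a_1+\bsi a_2=0$ is a single complex relation and does not yet force $\bmf a=\bmf 0$. To close the argument I would repeat the entire computation with the companion CGO solution $\bmf u_0'=(1,-\bsi)^\top e^{\overline{\boldsymbol\rho}\cdot\bmf x}$ obtained by conjugating both the polarization and $\boldsymbol\rho\mapsto\overline{\boldsymbol\rho}$; one checks $\mathcal L\bmf u_0'=\bmf 0$ with the same exponential decay on $S_h$, so the analogues of \eqref{eq:lame2} and \eqref{eq:u1 boundary} hold with coefficients that are again nonzero under \eqref{angle}. This produces the complementary relation $a_1-\bsi a_2=0$, and combining the two gives $\bmf a=\bmf 0$, that is $\bmf v(\bmf 0)=\bmf w(\bmf 0)=\bmf 0$. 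I expect the main obstacle to be twofold: ensuring the leading coefficient is nonzero (the structural role of $\theta_M-\theta_m\neq\pi$) and controlling the H\"older remainders so that they vanish after the $s^{\pm1},s^{\pm2}$ rescaling; the need for the two polarizations is the genuinely vector-valued feature that has no counterpart in the scalar acoustic analogue.
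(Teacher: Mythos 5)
Your proposal is correct in substance, but it takes a genuinely different route from the paper: the paper does not prove Lemma \ref{lem 3.3} from scratch at all, it simply invokes \cite[Theorem 2.3]{DLS21} for the case $\eta(\bmf 0)\neq 0$ and \cite[Remark 2.3]{DLS21} for $\eta\equiv 0$ to get $\bmf v(\bmf 0)=\bmf 0$, and then reads off $\bmf w(\bmf 0)=\bmf 0$ from the Dirichlet matching on $\Gamma_h^\pm$. What you have written is essentially a self-contained reconstruction of the cited result using the CGO machinery of Lemmas \ref{lem 3.1}--\ref{lem 3.2}; it closely parallels the paper's own proof of Lemma \ref{lem 5.1} (same Betti identity, same collapse of the boundary integrand to $\eta\,\bmf w\cdot\bmf u_0$ via the transmission conditions, same $s^{-1}$ versus $s^{-2}$ separation of the boundary and volume contributions, same role of $\theta_M-\theta_m\neq\pi$ in keeping the leading coefficients nonzero). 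Your device of a second polarization $(1,-\bsi)^\top e^{\overline{\boldsymbol\rho}\cdot\bmf x}$ to upgrade the single complex relation $a_1+\bsi a_2=0$ to $\bmf a=\bmf 0$ is valid (one checks $\overline{\boldsymbol\rho}\cdot\overline{\boldsymbol\rho}=0$ and $\nabla\cdot\bmf u_0'=0$, and the analogous boundary/volume coefficients are nonzero under \eqref{angle}); the paper instead sidesteps this in Lemma \ref{lem 5.1} by noting the coefficients are real and splitting the solution into real and imaginary parts, which achieves the same end more cheaply. Either closing move is fine, but you should note that one of them is genuinely needed -- the single relation $a_1+\bsi a_2=0$ does not force $\bmf a=\bmf 0$ for complex $\bmf a$, and you were right to flag this.

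The one point that does not match the statement as printed is the branch $\eta\equiv 0$: your argument yields $\omega^2(q-1)C_V\beta=0$ and therefore needs $q\neq 1$, whereas the lemma only assumes $q\in\mathbb R\setminus\{0\}$. This is not a defect of your proof but of the statement: for $q=1$ and $\eta\equiv 0$ the transmission conditions are the trivial Cauchy matching and $\bmf v=\bmf w$ equal to any nonvanishing Lam\'e solution is a counterexample, so no proof can avoid a contrast hypothesis there. The result cited from \cite{DLS21} must carry that hypothesis, and the paper only ever applies the lemma across interfaces where the density jumps, so nothing downstream breaks; but you should state $q\neq 1$ explicitly in the $\eta\equiv 0$ case rather than leaving it as a parenthetical "whenever the contrast $q\neq 1$". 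Two minor technical points to tidy up if you write this out in full: the Betti identity for $\bmf w\in H^1(S_h)^2$ with $\mathcal L\bmf w\in L^2(S_h)^2$ must be taken in the weak/duality sense (as the paper does via \cite{costabel88} and \cite{McLean} in Lemma \ref{lem 5.1}), and the identification $\bmf v(\bmf 0)=\bmf w(\bmf 0)$ requires the continuity of $\bmf v$ up to the closed corner, which your appeal to $H^2\hookrightarrow C^{0,\gamma}$ on the Lipschitz sector does supply.
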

\begin{proof} 
	Let us consider  the following two  cases. For the case that $\eta(\bmf {x}) \in C^\alpha(\overline {\Gamma _h^\pm})$ for $ 0<\alpha<1 $  satisfying  $\eta (\bmf {0})\ne 0$, by virtue  of  \cite [Theorem 2.3]{DLS21}, we  can obtain 
	\begin{equation}\label{eq:v=0}
		\bmf {v(0)} =\bmf{0}.
	\end{equation}  For another  case that $ \eta \equiv 0 $ on $\Gamma_h^\pm$, using \cite [Remark 2.3]{DLS21},  it   yields  \eqref{eq:v=0}. By using the boundary conditions, it indicates that $ \bmf {v(0)} = \bmf{w(0)} =\bmf{0} $.
	
	The proof is complete.
\end{proof}

For a PDE transmission problem \eqref{eq:PDE system}, the subsequent lemma states that the underlying solution is H\"older continuous at the corner, which will help to prove the main unique identifiability results in the next section.

\begin{lem}\label{lem 3.4}
	\cite [Lemma4.2]{DLS21}  For $h\in \mathbb R_+$, let $ S_{2h}=W \cap B_{2h} $ and $ \Gamma_{2h}^\pm =\partial S_{2h} \backslash \partial B_{2h} $, where $ W $ is the infinite sector defined in \eqref{eq:sign1} with the opening angle $ \theta_W \in (0, \pi) $. Suppose that $ \bmf{u}\in H^1(B_{2h})^2 $ satisfies the following PDE system:
	\begin{equation}\begin{cases}\label{eq:PDE system}
			\mathcal{L}\bmf {u}^{-}+ {\omega}^2 q\bmf{u}^-=\bmf{0}, &\mbox{in}\quad S_{2h},\\
			\mathcal{L}\bmf {u}^{+}+ {\omega}^2 \bmf{u}^+=\bmf{0}, &\mbox{in}\quad B_{2h} \backslash \overline{S_{2h}},\\
			\bmf {u}^+=\bmf{u}^{-},&\mbox{on}\quad \Gamma_{2h}^\pm,
	\end{cases}\end{equation}
	where $ \bmf {u}^-=\bmf{u}|_{S_{2h} } $ and  $ \bmf {u}^+=\bmf{u}|_{B_{2h} \backslash \overline S_{2h}} $, $\omega$ is a positive constant and $ q\in L^\infty(S_{2h})$. Assume that $\bmf{u}^{+}  $ is real analytic in $ B_{2h}\backslash \overline {S_{2h}} $. There exists $ \alpha\in (0,1) $ such that $ \bmf{u}^{-} \in C^\alpha(\overline{S_h})^2 $.
\end{lem}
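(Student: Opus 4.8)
The plan is to establish interior elliptic regularity on the two sides of the interface and then glue them together across $\Gamma_{2h}^\pm$ using the transmission (here, continuity) condition. First I would observe that on the exterior region $B_{2h}\backslash\overline{S_{2h}}$ the field $\bmf u^+$ is already assumed real analytic, hence trivially H\"older continuous up to $\Gamma_{2h}^\pm$ from the outside; the only genuine work concerns the interior field $\bmf u^-$ in the sector $S_{2h}$, where the coefficient $q\in L^\infty$ is merely bounded. The natural strategy is a localized $W^{2,p}$ (or $W^{1,p}$ with $p>2$) estimate for the Lam\'e operator $\mathcal L$ on the sector, combined with the Sobolev embedding $W^{1,p}(S_h)\hookrightarrow C^\alpha(\overline{S_h})$ in two dimensions for $p>2$ and $\alpha=1-2/p$. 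Because we are given $\bmf u\in H^1(B_{2h})^2$ globally, the Dirichlet trace of $\bmf u^-$ on $\Gamma_{2h}^\pm$ coincides with that of the analytic $\bmf u^+$, so the boundary datum on the straight edges is itself smooth.

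The key steps, in order, would be the following. I would treat the equation $\mathcal L\bmf u^-+\omega^2 q\bmf u^-=\bmf 0$ in $S_{2h}$ as an elliptic system with right-hand side $\bff:=-\omega^2 q\bmf u^-\in L^2(S_{2h})^2$ (since $q\in L^\infty$ and $\bmf u^-\in L^2$), and with Dirichlet boundary values on the two edges $\Gamma_{2h}^\pm$ given by the restriction of the analytic function $\bmf u^+$. Since $S_{2h}$ is a sector with opening angle $\theta_W\in(0,\pi)$, the corner is convex, and for convex corners the Lam\'e system enjoys improved regularity: the solution lies in $W^{2,2}$ away from the degenerate angles, and more to the point the leading corner singularity exponent for the Dirichlet problem on a convex wedge is strictly larger than the threshold needed to place $\bmf u^-$ in $W^{1,p}$ for some $p>2$. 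I would invoke the Agmon--Douglis--Nirenberg theory together with the known corner-singularity analysis (e.g. of Dauge or Kondrat'ev type) for elliptic systems on plane sectors to conclude that $\bmf u^-\in W^{1,p}(S_h)^2$ for some $p>2$ on the smaller disk $B_h\Subset B_{2h}$, using a cutoff to confine attention to a neighborhood of the vertex. Finally, the embedding $W^{1,p}(S_h)^2\hookrightarrow C^\alpha(\overline{S_h})^2$ with $\alpha=1-2/p\in(0,1)$ yields the claimed H\"older continuity.

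The main obstacle I anticipate is precisely the corner-singularity bookkeeping: one must verify that the opening-angle restriction $\theta_W\in(0,\pi)$ (convexity) guarantees that the smallest singular exponent of the Lam\'e Dirichlet problem on the wedge exceeds the value $2/p$ for some $p>2$, so that no singular term of the form $r^{\gamma}$ with $\gamma\le 0$ (or $\gamma$ too small) appears in the asymptotic expansion of $\bmf u^-$ near the vertex. This is the step where the hypothesis $\theta_W\neq\pi$ and convexity are genuinely used, and where care is needed because the transmission/Dirichlet data are those inherited from $\bmf u^+$ rather than zero. A clean way to sidestep a full Kondrat'ev expansion is to reduce to a homogeneous Dirichlet problem by subtracting a harmonic-type lift of the analytic boundary data $\bmf u^+|_{\Gamma_{2h}^\pm}$ (which is itself smooth and hence contributes a $C^\infty$, a fortiori $C^\alpha$, piece), leaving a system with $L^2$ right-hand side and zero Dirichlet data on the convex sector, for which the $W^{1,p}$ regularity is standard.

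Since this lemma is quoted verbatim from \cite[Lemma 4.2]{DLS21}, I would in practice simply cite that reference for the detailed corner-regularity estimate, reproducing only the reduction-and-embedding skeleton above and deferring the singular-exponent computation to the cited work.
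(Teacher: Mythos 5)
The paper does not actually prove this lemma: it is quoted verbatim from \cite[Lemma4.2]{DLS21} and the citation is the entire ``proof'', so your closing decision to simply defer to that reference is exactly what the authors do. Your reconstruction of the underlying argument --- treat $\mathbf{u}^{-}$ as an $H^1$ solution of a Dirichlet problem for $\mathcal{L}$ on the convex sector with $L^2$ right-hand side $-\omega^2 q\,\mathbf{u}^{-}$ and boundary data inherited from $\mathbf{u}^{+}$, invoke convex-corner regularity for the Lam\'e system to get $W^{1,p}$ with $p>2$ (or $H^2$) near the vertex, and conclude by Sobolev embedding --- is the standard route for results of this type and is consistent with the cited source. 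Your instinct to work with the Dirichlet problem on the sector alone, rather than claiming $\mathcal{L}\mathbf{u}\in L^2(B_{2h})$ distributionally in the whole ball, is also correct: the lemma assumes only Dirichlet matching across $\Gamma_{2h}^{\pm}$, so the traction may jump and $\mathcal{L}\mathbf{u}$ may contain a surface distribution on the edges.

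One step of your sketch is overstated, though repairable. Real analyticity of $\mathbf{u}^{+}$ in the open set $B_{2h}\setminus\overline{S_{2h}}$ does not imply that its trace on $\Gamma_{2h}^{\pm}$ is smooth up to the vertex: the complement is a sector of opening angle $2\pi-\theta_W>\pi$, and an $H^1$ solution of the constant-coefficient Lam\'e system there can carry a corner singularity of the form $r^{\gamma}$ with $\gamma\in(0,1)$ while remaining analytic away from $r=0$. Consequently the lift of the boundary data contributes in general only a $C^{0,\gamma}$ (equivalently, $W^{1,p}$ for some $p>2$) piece, not a $C^{\infty}$ one. This still yields the stated conclusion $\mathbf{u}^{-}\in C^{\alpha}(\overline{S_h})^2$ after possibly shrinking $\alpha$, but the phrase ``contributes a $C^{\infty}$, a fortiori $C^{\alpha}$, piece'' should be weakened accordingly (or one should note that in every application in this paper $\mathbf{u}^{+}$ in fact solves the constant-coefficient equation in the full ball $B_h$ and is therefore genuinely analytic up to the edges).
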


  Lemma \ref{lem 5.1} constitutes a key ingredient for the proof of Theorems \ref{Thm4.2} to \ref{Thm4.3}, where we shall prove that the density and boundary parameters in \eqref{eq:lame9} coincide with their counterparts under the generic assumption. To analyze the resulting transmission PDE system \eqref{eq:lame10} near a corner, we rely upon the CGO solution presented in Lemma \ref{lem 3.1}.

\begin{lem} \label{lem 5.1}
	Let $S_h$, $\Gamma_h^\pm $ and $B_h$ be defined in \eqref{eq:sign1}. Consider the PDE system below:
	\begin{equation}\label{eq:lame9}
		\begin{cases}
			\mathcal{L}\bmf {u}_1^- +{\omega}^2q_1\bmf {u}_1^- =\bmf{0}, &\mbox{in}\quad S_h,\\
			\mathcal{L}\bmf {u}_1^+ +{\omega}^2q_1^+\bmf {u}_1^+= \bmf{0},&\mbox{in}\quad B_h\backslash \overline {S_h}, \\
			\mathcal{L}\bmf {u}_2^- +{\omega}^2q_2\bmf {u}_2^-=\bmf{0},&\mbox{in}\quad S_h,\\
			\mathcal{L}\bmf {u}_2^+ +{\omega}^2q_2^+\bmf {u}_2^+= \bmf{0},&\mbox{in}\quad B_h\backslash \overline {S_h}, \\
			\bmf {u}_1^+= \bmf {u}_1^-,T_\nu \bmf {u}_1^+ + \eta_1 \bmf {u}_1^+= T_\nu \bmf {u}_1^- ,&\mbox{on}\quad \Gamma _h^ \pm, \\
			\bmf {u}_2^+ = \bmf {u}_2^- ,T_\nu \bmf {u}_2^+ +\eta_2 \bmf {u}_2^+ = T_\nu \bmf {u}_2^- ,&\mbox{on}\quad \Gamma _h^ \pm, 
		\end{cases}
	\end{equation}
	where $q_j$ and $q_j^+$ ($j=1,2$) are nonzero real constants in $S_h$ and $B_h\backslash \overline {S_h}$,  respectively. Moreover, $\eta_j$ ($j=1,2$) are constants on $\Gamma _h^ \pm $. Assume that ${\bmf {u}_j} ={\bmf {u}_j^-}\chi_{S_h}+{\bmf{u}_j^+}\chi_{B_h\backslash \overline {S_h} } \in H^1(B_h)^2$, $j=1,2$, are solutions to \eqref{eq:lame9} satisfying 
	\begin{equation}\label{eq:condition1}
		\bmf {u}_1^+ = \bmf {u}_2^+ \ \mbox{in}\ B_h\backslash \overline {S_h},\quad \mbox{and}
		\quad \bmf {u}_2(\bmf {0})\ne\bmf{0} .
	\end{equation}
	One has  $\eta_1=\eta_2$ and $q_1=q_2$. Moreover, we have
	\begin{equation}\label{comment1}
		\bmf u_1^- = \bmf u_2^-\quad \mbox{in}\quad S_h.
	\end{equation}
\end{lem}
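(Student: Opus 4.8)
The plan is to reduce the two transmission problems to a single difference field on $S_h$ and then probe the corner with the CGO solution $\bmf{u}_0$ of Lemma \ref{lem 3.1}, reading off the unknown parameters from consecutive orders in $s$ as $s\to+\infty$. First I would record the Cauchy data of the difference. Set $\bmf{v}:=\bmf{u}_1^- -\bmf{u}_2^-$ on $S_h$. Since $\bmf{u}_1^+=\bmf{u}_2^+$ in $B_h\backslash\overline{S_h}$ by \eqref{eq:condition1}, the continuity relations in \eqref{eq:lame9} give $\bmf{v}=\bmf{0}$ on $\Gamma_h^\pm$, and subtracting the two traction conditions (the Neumann data of $\bmf{u}_1^+$ and $\bmf{u}_2^+$ agree there) yields $T_\nu\bmf{v}=(\eta_1-\eta_2)\bmf{u}_2^-$ on $\Gamma_h^\pm$. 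Writing $\mathcal{L}\bmf{u}_j^-=-\omega^2 q_j\bmf{u}_j^-$, the field $\bmf{v}$ solves $\mathcal{L}\bmf{v}+\omega^2 q_1\bmf{v}=-\omega^2(q_1-q_2)\bmf{u}_2^-$ in $S_h$. By Lemma \ref{lem 3.4} each $\bmf{u}_j^-$, hence $\bmf{v}$, is H\"older continuous up to the corner, so the corner values $\bmf{u}_2^-(\bmf{0})=\bmf{u}_2(\bmf{0})\ne\bmf{0}$ and $\bmf{v}(\bmf{0})=\bmf{0}$ are meaningful and all CGO-weighted integrals below converge.

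Next I would apply the second Betti formula to $(\bmf{v},\bmf{u}_0)$ over $S_h$, carried out on $S_h\backslash B_\varepsilon$ with $\varepsilon\to0$ to accommodate the limited regularity at the vertex. Using $\mathcal{L}\bmf{u}_0=\bmf{0}$ from \eqref{eq:u0 cond}, the fact that $\bmf{v}=\bmf{0}$ on $\Gamma_h^\pm$, and the exponential smallness of $\bmf{u}_0$ and $T_\nu\bmf{u}_0$ on the arc $\Lambda_h$ (Lemma \ref{lem 3.2}), this produces the identity
\[
-\omega^2 q_1\int_{S_h}\bmf{u}_0\cdot\bmf{v}\,\rmd\bmf{x}-\omega^2(q_1-q_2)\int_{S_h}\bmf{u}_0\cdot\bmf{u}_2^-\,\rmd\bmf{x}=(\eta_1-\eta_2)\int_{\Gamma_h^\pm}\bmf{u}_0\cdot\bmf{u}_2^-\,\rmd\sigma+O(e^{-cs}).
\]
Because $\hat u_2=\bsi\hat u_1$, every pairing $\bmf{u}_0\cdot(\cdot)$ collapses to $\hat u_1$ times the complex combination $(\,\cdot\,)_1+\bsi(\,\cdot\,)_2$. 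Replacing $\bmf{u}_2^-$ by its corner value up to a H\"older remainder and inserting \eqref{eq:u1 boundary}, \eqref{eq:lame2}, \eqref{eq:lame3} and \eqref{eq:lame4}, the boundary term is $O(s^{-1})$, the $(q_1-q_2)$ volume term is $O(s^{-2})$, and the $q_1\bmf{v}$ term is $o(s^{-2})$ since $\bmf{v}(\bmf{0})=\bmf{0}$. Multiplying by $s$ and letting $s\to+\infty$ kills both volume terms and forces $(\eta_1-\eta_2)\big[(\bmf{u}_2^-)_1(\bmf{0})+\bsi(\bmf{u}_2^-)_2(\bmf{0})\big]=0$, the surviving geometric constant (equal up to a unimodular factor to $2\cos\frac{\theta_M-\theta_m}{2}$) being nonzero precisely by the angle restriction $\theta_M-\theta_m\ne\pi$ of \eqref{angle}.

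The main obstacle is that $\bmf{u}_2(\bmf{0})\ne\bmf{0}$ does not by itself prevent the scalar combination $(\bmf{u}_2^-)_1(\bmf{0})+\bsi(\bmf{u}_2^-)_2(\bmf{0})$ from vanishing for a complex-valued field. I would resolve this by repeating the entire computation with the companion CGO solution built from $\boldsymbol{\rho}'=s(\mathbf{d}-\bsi\mathbf{d}^\perp)$ and density $(1,-\bsi)^\top$, which decays in the same cone (it depends only on $\mathbf{d}$) and produces instead the conjugate combination $(\bmf{u}_2^-)_1(\bmf{0})-\bsi(\bmf{u}_2^-)_2(\bmf{0})$. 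If $\eta_1\ne\eta_2$, both combinations would have to vanish, forcing $\bmf{u}_2^-(\bmf{0})=\bmf{0}$ and contradicting \eqref{eq:condition1}; hence $\eta_1=\eta_2$.

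With $\eta_1=\eta_2$ the boundary term drops out and the identity is dominated at order $s^{-2}$ by the $(q_1-q_2)$ volume integral, whose coefficient $\tfrac12\bsi e^{2\bsi\theta_d}(e^{-2\bsi\theta_M}-e^{-2\bsi\theta_m})$ from \eqref{eq:lame2} is nonzero whenever $\theta_M\ne\theta_m$; the same pair of CGO solutions then forces $q_1=q_2$. Finally, once $\eta_1=\eta_2$ and $q_1=q_2$, the difference $\bmf{v}$ satisfies the homogeneous system $\mathcal{L}\bmf{v}+\omega^2 q_1\bmf{v}=\bmf{0}$ in $S_h$ with both $\bmf{v}=\bmf{0}$ and $T_\nu\bmf{v}=\bmf{0}$ on $\Gamma_h^\pm$; Holmgren's theorem and unique continuation for the constant-coefficient Lam\'e system then give $\bmf{v}\equiv\bmf{0}$, i.e. $\bmf{u}_1^-=\bmf{u}_2^-$ in $S_h$, which is \eqref{comment1}.
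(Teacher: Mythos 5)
Your proposal is correct and follows essentially the same route as the paper: form the difference $\bmf v=\bmf u_1^--\bmf u_2^-$, read off its Cauchy data $\bmf v=\bmf 0$, $T_\nu\bmf v=(\eta_1-\eta_2)\bmf u_2^-$ on $\Gamma_h^\pm$ from \eqref{eq:condition1} and the transmission conditions, pair with the CGO solution via the second Betti/Green identity, extract $\eta_1=\eta_2$ at order $s^{-1}$ and $q_1=q_2$ at order $s^{-2}$, and finish \eqref{comment1} by Holmgren. The one genuine divergence is how you rule out the degenerate case where the scalar combination $u_{2,1}^-(\bmf 0)+\bsi u_{2,2}^-(\bmf 0)$ vanishes even though $\bmf u_2(\bmf 0)\neq\bmf 0$: the paper invokes the real-valuedness of the coefficients to reduce, by taking real and imaginary parts, to real-valued solutions (for which a nonzero real vector cannot satisfy $a+\bsi b=0$), whereas you run the argument a second time with the companion CGO field built from $\boldsymbol\rho'=s(\mathbf d-\bsi\mathbf d^\perp)$ and polarization $(1,-\bsi)^\top$ to capture the conjugate combination $a-\bsi b$; both devices are valid, and yours has the mild advantage of not needing to decompose the solution, at the cost of verifying (as you implicitly do, and as is easily checked) that the companion field is again a decaying Lam\'e CGO solution in the same cone.
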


\begin{proof}
	In what follows, the proof of this lemma shall be divided  into three parts.
	
	\medskip \noindent {\bf Part 1.} First, we shall prove that $\eta_1=\eta_2$.
	
	By Lemma \ref{lem 3.4} , we can directly derive that $ \bmf{u}_1^-\in C^\alpha(\overline{S_h})^2 $ and $ \bmf{u}_2^-\in C^\alpha(\overline{S_h})^2 $.
	
	Using \eqref{eq:condition1}, by the trace theorem, we know that $\bmf {u}_1^+= \bmf{u}_2^+ $ on $\Gamma_h^\pm $, which can be used to obtain that  $T_\nu \bmf {u}_1^+= T_\nu \bmf {u}_2^+$  on $ \Gamma_h^\pm$. Furthermore we can derive that $\bmf {u}_1^-= \bmf {u}_1^+=\bmf {u}_2^+ =\bmf {u}_2^-$ on $\Gamma_h^\pm $, by the generalized  transmission boundary conditions in \eqref{eq:lame9}.
	
	Define $\bmf {v}:= \bmf {u}_1^-- \bmf {u}_2^- $, by using \eqref{eq:lame9}, we have 
	\begin{equation}\label{eq:lame10}
		\begin{cases}
			\mathcal{L}\bmf {v}+{\omega}^2q_1\bmf {v}={\omega}^2(q_2-q_1) {\bmf {u}_2^-} ,& \mbox{in}\quad S_h,\\
			\bmf {v}=\bmf {0},\ T_\nu \bmf {v} = (\eta_1-\eta_2)\bmf {u}_2^-,& \mbox{on} \quad \Gamma _h^\pm. 
		\end{cases}
	\end{equation}
	
	Recall that the CGO solution $\bmf u_0$ is defined in \eqref{eq:lame1}, where $\mathbf d$ and $\mathbf  d^\perp$ are given by \eqref{eq:d notation}.  Since $\bmf {u}_1^-,\bmf {u}_2^- \in H^1(B_h)^2$, from \eqref{eq:lame9}, we obviously have $\mathcal{L}{\bmf {u}_1^-} ,\mathcal{L}{\bmf u_2^-}  \in H^1(B_h)^2$, where  $S_h$ is the bounded Lipschitz domain. By the fact that $\mathcal{L}\bmf {u}_0=\bmf {0}$ in $S_h$ and $\bmf {v=0}$ on $\Gamma _h^ \pm $, we can obtain the  integral identity from the second Green identity  (cf.  \cite[Lemm 3.4]{costabel88}, \cite[Lemma 2.5]{DLS21} and \cite[Theorem 4.4]{McLean}) and \eqref{eq:lame10} as follows:
	\begin{equation*}
		\int_{S_h} \bmf {u}_0(\bmf {x}) \cdot \mathcal{L}\bmf {v}dx=\int_{S_h}({\omega}^2(q_2-q_1)\bmf {u}_2^{-}-{\omega}^2q_1\bmf {v}) \cdot \bmf {u}_0(\bmf {x}) \rmd\bmf {x},
	\end{equation*}
	and
	\begin{equation*}\begin{aligned}
			&\int_{S_h}\bmf {u}_0(\bmf {x}) \cdot \mathcal{L}\bmf {v}\rmd\bmf{x}
			= \int_{\partial S_h} [T_\nu \bmf {v(x)} \cdot \bmf {u}_0(\bmf {x})-T_\nu\bmf {u}_0(\bmf {x}) \cdot\bmf {v(x)}]\rmd\sigma \\
			=& \int_{\Gamma _h^\pm } (\eta_1-\eta_2) \bmf u_2^{-}(\bmf {x} )\cdot\bmf {u}_0(\bmf {x})\rmd\sigma+\int_{\Lambda_h} [T_\nu \bmf {v(x)}\cdot{\bmf u}_0({\bmf x })-T_\nu \bmf {u}_0(\bmf {x}) \cdot \bmf {v(x)}]\rmd\sigma.
	\end{aligned}\end{equation*}
	By rearranging terms, it can be rewritten as 
	\begin{equation}\begin{aligned} \label{eq:inter indentity}
			&\int_{S_h}{\omega}^2(q_2-q_1){\bmf{u}_2^-}\cdot \bmf {u}_0(\bmf {x})\rmd\bmf {x}\\ 
			=& \int_{S_h}{\omega}^2q_1\bmf {v} \cdot \bmf {u}_0(\bmf {x}) \rmd\bmf {x}+\int_{\Gamma_h^ \pm } (\eta_1-\eta_2)\bmf {u}_2^-(\bmf{x}) \cdot \bmf {u}_0(\bmf {x})\rmd\sigma \\
			& +\int_{\Lambda _h}[T_\nu \bmf {v(x)}\cdot\bmf {u}_0(\bmf {x})-T_\nu\bmf {u}_0(\bmf {x}) \cdot \bmf {v(x)}]\rmd\sigma.
	\end{aligned}\end{equation}
	
	Denote 
	\begin{equation}\label{I1}
		I_1:=\int_{\Lambda_h} [T_\nu\bmf{v(x)} \cdot \bmf {u}_0(\bmf {x})-T_\nu \bmf{u}_0(\bmf {x}) \cdot \bmf {v(x)}]\rmd\sigma. 
	\end{equation}
	Using the  H\"older inequality, Lemma \ref{lem 3.2} and the trace theorem, one has 
	\begin{equation}\begin{aligned}\label{|I1|}
			|I_1|& \le \| T_\nu \bmf {v} \|_{H^{-1/2}(\Lambda_h)^2}\| \bmf {u}_0 \|_{H^{1/2(\Lambda_h)^2}}+\| T_\nu \bmf {u}_0\|_{{L^2}(\Lambda_h)^2}^2\| \bmf {v} \|_{L^2(\Lambda_h)^2}\\
			&\le (\|\bmf {u}_0 \|_{H^1(\Lambda_h)^2} + \| T_\nu \bmf {u}_0 \|_{L^2(\Lambda _h)^2})\|\bmf{v}\|_{H^1(S_h)^2} \le Cse^{-s h \delta_W}
	\end{aligned}\end{equation}
	as $s \to +\infty$, where $\delta_W$ is defined in Lemma \ref{lem 3.1} and $ C $ is a positive constant, which does not depend on $s$.
	
	Due to  $ \bmf{u}_j^-\in C^\alpha(\overline{S_h})^2$  ($j=1,2 $),  we have $\bmf v\in C^\alpha (\overline {S_h})^2$ for $\alpha\in (0,1)$. Then  $\bmf v$ has the  expansion:
	\begin{equation}\label{eq:expansions3}
		\bmf {v(x)} = \bmf{v(0)} + \delta \bmf {v(x)},\quad |\delta\bmf {v(x)}| \le {\|\bmf {v} \|}_{C^\alpha(\overline{S_h} ) }|\bmf{x} |^\alpha , 
	\end{equation}
	where $\bmf{v(0)=0}$  due to $\bmf {v=0}$ on $\Gamma _h^\pm $. Using \eqref{eq:expansions3} we can show that
	\begin{equation}\label{eq:vu}
		\int_{S_h} {\omega}^2q_1\bmf {v} \cdot \bmf {u}_0(\bmf {x})d\bmf {x}={\omega}^2q_1\int_{S_h} \delta\bmf {v(x)}\cdot \bmf {u}_0(\bmf {x})\rmd\bmf {x}.
	\end{equation}
	Denote
	\begin{equation}\label{I2}
		I_2:=\int_{S_h}\delta\bmf {v(x)}\cdot \bmf {u}_0(\bmf {x})\rmd\bmf {x}.
	\end{equation}
	Based on \eqref{eq:lame2}, it is readily  to know that 
	\begin{equation}\begin{aligned}\label{|I2|}
			|I_2| &\le \int_{S_h} |\delta \bmf {v(x)} || \bmf {u}_0(\bmf {x})| \rmd\bmf {x} \le \sqrt 2{\| \bmf v \|}_{C^\alpha  (\overline{S_h} ) }\int_W | \hat u_1(\bmf x) ||\bmf x|^\alpha \rmd\bmf {x}\\
			&\le \sqrt 2 {\| \bmf {v} \|}_{C^\alpha( \overline{S_h} ) }\frac{(\theta_M-\theta_m)\Gamma (\alpha+2)}{\delta_W^{\alpha+2}}s^{-\alpha-2},
		\end{aligned}
	\end{equation}
	as $s\rightarrow +\infty$. Denote
	\begin{equation}\label{I3}
		I_3^\pm:=\int_{\Gamma_h^\pm }(\eta_1-\eta_2)\bmf {u}_2^-(\bmf x)\cdot\bmf {u}_0(\bmf {x})\rmd\sigma.
	\end{equation}
	In view of ${\bmf u_2^-}\in C^\alpha (\overline{S_h})^2$ for $\alpha \in (0,1)$,   it has a expansion:
	\begin{equation}\label{expansion4}
		\bmf {u}_2^-(\bmf{x})=\bmf {u}_2^-(\bmf{0})+ \delta \bmf {u}_2^-(\bmf x),\quad | \delta \bmf {u}_2^-(x) | \le {\| \bmf {u}_2^-\|}_{C^\alpha(\overline{S_h} ) }| \bmf {x} |^\alpha .
	\end{equation}
	Substituting \eqref{expansion4} into $I_3^+$, there holds that
	\begin{equation}\begin{aligned}\label{I3+}
			I_3^+&= (\eta_1-\eta _2)\int_{\Gamma _h^+ } (\bmf {u}_2^-(\bmf{0})+ \delta \bmf {u}_2^-(\bmf x))  \cdot\bmf {u}_0(\bmf {x})\rmd\sigma \\
			& =(\eta_1-\eta_2)\bmf {u}_2^-(\bmf 0)\cdot \int_{\Gamma_h^+} \bmf {u}_0(\bmf {x})\rmd\sigma+(\eta_1-\eta_2)\int_{\Gamma_h^+} \delta \bmf {u}_2^-(\bmf x) \cdot \bmf {u}_0(\bmf {x}) \rmd\sigma.
		\end{aligned}
	\end{equation}
	Let  
	\begin{equation}\label{I31+}
		I_{31}^+= \int_{\Gamma_h^+} \bmf {u}_0(\bmf {x})\rmd\sigma,\quad
		I_{32}^+=\int_{\Gamma_h^+} \delta \bmf {u}_2^-(\bmf x) \cdot \bmf {u}_0(\bmf {x}) \rmd\sigma.
	\end{equation}
	Utilizing \eqref{eq:u1 boundary}, we  have
	\begin{align}\label{eq:I_31^+}
		I_{31}^+ & = \left( \begin{array}{c}
			1 \\ 
			\bsi
		\end{array} \right)\int_0^h e^{s r \exp{\bsi(\theta_M-\theta_d ) }} \rmd r = \left( {\begin{array}{*{20 }{c}}
				1\\
				\bsi
		\end{array}} \right)\left(-e^{\bsi(\theta_d-\theta_M )}s^{-1}-{\sf R({\theta_M})}\right),\notag
	\end{align}
	where ${\sf R({\theta_M})}$ is defined in \eqref{eq:u1 boundary}.
	By \eqref{expansion4} and \eqref{eq:estgam}, it can be deduced that
	\begin{equation}\label{|I32+|}
		|I_{32}^+|\le \sqrt 2 {\|\bmf {u}_2^-\|}_{C^\alpha(\overline{S_h} ) }\int_0^h r^\alpha e^{s r\cos(\theta_M-\theta_d)} dr = O(s^{-\alpha-1}),
	\end{equation}
	as $s\rightarrow +\infty$. 
	Utilizing the similar argument for $I_{3}^+$, one can obtain that 
	\begin{equation}\begin{aligned}\label{I3-}
			I_3^-& = (\eta_1-\eta_2)\int_{\Gamma_h^-} (\bmf {u}_2^-(\bmf 0)+ \delta\bmf {u}_2^-(\bmf 0) )  \cdot\bmf {u}_0(\bmf{x})\rmd\sigma \\
			:&=(\eta_1- \eta_2)\bmf {u}_2^-(\bmf 0) \cdot I_{31}^{-}+(\eta_1-\eta_2)I_{32}^-,
	\end{aligned}\end{equation}
	where
	\begin{align}\label{eq:I_31^-}
		I_{31}^ - & =\left( \begin{array}{c}
			1 \\ 
			\bsi
		\end{array} \right)\int_0^h e^{s r \exp{\bsi(\theta_m-\theta_d ) }} \rmd r = \left( {\begin{array}{*{20 }{c}}
				1\\
				\bsi
		\end{array}} \right)\left(-e^{\bsi(\theta_d-\theta_m )}s^{-1}-{\sf R({\theta_m})}\right),
	\end{align}
	and 
	\begin{equation}\label{I32-}
		|I_{32}^-| \le \sqrt 2 {\|\bmf {u}_2^-\|}_{C^\alpha(\overline{S_h} )}\int_0^h r^\alpha e^{-s r\cos(\theta_m -\theta_d  )} \rmd r = O(s^{-\alpha-1}),
	\end{equation}
	as $s\rightarrow +\infty$. Here ${\sf R({\theta_m})}$ is defined in \eqref{eq:u1 boundary}.
	Furthermore, using \eqref{expansion4}, there holds that
	\begin{align}\label{eq:exp1}
		&\int_{S_h} {\omega}^2(q_2-q_1)\bmf {u}_2^- \cdot \bmf{u}_0(\bmf {x})\rmd \bmf {x}= {\omega}^2(q_2-q_1)\int_{S_h} \left( \bmf {u}_2^-(\bmf 0) + \delta\bmf {u}_2^-(\bmf 0) \right) \cdot \bmf{u}_0(\bmf {x})\rmd \bmf {x}\notag\\
		=& {\omega}^2(q_2-q_1)\bmf {u}_2^-(\bmf 0)\cdot  \int_{S_h} \bmf {u}_0(\bmf {x})\rmd \bmf {x} +{\omega}^2(q_2 - q_1)\int_{S_h} \delta \bmf {u}_2^-(\bmf 0) \cdot \bmf {u}_0(\bmf {x})\rmd \bmf {x}\notag\\
		= &{\omega}^2(q_2-q_1)\bmf {u}_2^-(\bmf 0)\cdot \int_W  \bmf {u}_0(\bmf {x}) \rmd \bmf {x}-{\omega}^2(q_2 - q_1)\bmf {u}_2^-(\bmf 0)\cdot \int_{W\backslash {S_h}}\bmf {u}_0(\bmf{x}) \rmd \bmf {x}\notag\\
		&+ {\omega}^2(q_2-q_1)\int_{S_h} \delta\bmf {u}_2^-(\bmf 0)\cdot \bmf {u}_0(\bmf {x }) \rmd \bmf {x}.
	\end{align}
	Define
	\begin{equation}\label{I4I5}
		I_4: = \bmf {u}_2^-(\bmf 0)\cdot\int_{W\backslash {S_h}} \bmf{u}_0(\bmf {x}) \rmd \bmf {x},\quad I_5:=\int_{S_h} \delta\bmf {u}_2^-(\bmf 0) \cdot \bmf {u}_0(\bmf{x})\rmd \bmf {x}. 
	\end{equation}
	According to \eqref{eq:lame4}, it yields that 
	\begin{equation}\label{I4}
		|I_4| \le 2\sqrt 2 |\bmf {u}_2^-(\bmf 0)|\frac{\theta_M-\theta _m}{\delta_W}s^{-1}e^{-\delta_W s h/2},
	\end{equation}
	as $s\rightarrow +\infty$. Utilizing the analogous argument for $I_2$, it  readily to know that 
	\begin{equation}\label{I5}
		|I_5|\le \sqrt 2 {\|\bmf {u}_2^-\|}_{C^\alpha( \overline{S_h} ) }\frac{(\theta_M-\theta_m)\Gamma (\alpha+2)}{\delta _W^{\alpha+2}}s^{-\alpha-2}=O(s^{-\alpha-2}),
	\end{equation}
	as $s\rightarrow +\infty$, where  $\alpha \in (0,1).$
	Hence, from the integral identity \eqref{eq:inter indentity} we can deduce  that 
	\begin{equation}\begin{aligned}\label{eq:itg idt1}
			&{\omega}^2(q_2-q_1)\bmf {u}_2^-(\bmf 0) \cdot  \int_W\bmf {u}_0(\bmf {x}) \rmd\bmf{x}\\
			=& I_1 +{\omega}^2q_1I_2+I_3^ \pm +{\omega}^2(q_2-q_1) I_4+ {\omega}^2(q_1-q_2)I_5.
	\end{aligned}\end{equation}
	Substituting \eqref{eq:lame2}, \eqref{I3+} and \eqref{I3-} into \eqref{eq:itg idt1}, after rearranging terms, there holds that
	\begin{align}\label{eq:indentity1}
		&(\eta_2-\eta_1)e^{\bsi\theta_d}\left(e^{-\bsi\theta_m }+e^{-\bsi\theta_M} \right) s^{-1}  \bmf {u}_2^-(\bmf 0) \cdot \left( {\begin{array}{*{20}{c}}
				1\\
				\bsi
		\end{array}} \right)\notag\\
		= &-0.5\bsi {\omega}^2(q_1-q_2)e^{2\bsi \theta_d } ( e^{-2\bsi \theta_M } -e^{-2\bsi \theta_m }  )  s^{-2} \bmf {u}_2^-(\bmf 0)\cdot \left( {\begin{array}{*{20}{c}}
				1\\
				\bsi
		\end{array}} \right)\notag\\
		&-(\eta_2-\eta_1)\left({\sf R({\theta_M})}+{\sf R({\theta_m})}\right)\bmf {u}_2^-(\bmf 0) \cdot \left( {\begin{array}{*{20}{c}}
				1\\
				\bsi
		\end{array}} \right)+ I_1 + {\omega}^2q_1I_2 + (\eta _1 -\eta _2)I_{32}^ + \notag\\
		&+(\eta_1-\eta_2)I_{32}^{-}+{\omega}^2(q_2 -q_1)\bmf {u}_2^-(\bmf 0)I_4+{\omega}^2(q_1-q_2)I_5.
	\end{align}
	Multiplying  $s$ on the both sides of \eqref{eq:indentity1}, combining with \eqref{|I1|}, \eqref{|I2|}, \eqref{|I32+|}, \eqref{eq:I_31^-}, \eqref{I32-},  \eqref{I4} and \eqref{I5}, taking $s \to  + \infty $, it yields that
	\begin{equation}\label{s to inf}
		(\eta_2-\eta_1)e^{\bsi\theta_d}\left(e^{-\bsi\theta_m }+e^{-\bsi\theta_M} \right)  \bmf {u}_2^-(\bmf 0) \cdot \left( {\begin{array}{*{20}{c}}
				1\\
				\bsi
		\end{array}} \right)=0.
	\end{equation}
	By noting that $-\pi < \theta_m<\theta_M < \pi  $ and $ \theta_m-\theta_M\ne\pi $, one can claim  that $ e^{-\bsi\theta_m }+e^{-\bsi\theta_M}\neq 0 $. So we know $\mu (\theta_M)^{-2}+\mu (\theta_m)^{-2} \ne 0$. Due to the fact that the physical parameters in \eqref{eq:lame4} is real valued, without  loss of generality, we assume that the solutions $\mathbf  u_j$ ($j=1,2$)  are real valued by considering  the real and imaginary part of \eqref{eq:lame4} respectively.    Since $\eta$ is a real valued function and $\bmf {u}_2^-(\bmf 0) \ne \bmf{0}$, one has
	\begin{equation}\label{eta1=eta2}
		\eta_1=\eta_2,
	\end{equation}
	which completes the proof of Part 1.
	
	\medskip
	\noindent	{\bf Part\ 2}. We prove $q_1=q_2$ in this part.
	
	Substituting \eqref{eta1=eta2} into \eqref{eq:indentity1}, there holds that 
	\begin{align}\label{eq:indentity2}
		&0.5\bsi {\omega}^2(q_1-q_2)e^{2\bsi \theta_d } ( e^{-2\bsi \theta_M } -e^{-2\bsi \theta_m }  )  s^{-2} \bmf {u}_2^-(\bmf 0)\cdot \left( {\begin{array}{*{20}{c}}
				1\\
				\bsi
		\end{array}} \right)\notag\\
		= &I_1 +{\omega}^2q_1I_2+{\omega}^2(q_2 -q_1)\bmf{u}_2^-(\bmf 0)I_4 + {\omega}^2(q_1-q_2)I_5.
	\end{align}
	
	Combining with \eqref{|I1|}, \eqref{|I2|}, \eqref{I4} and \eqref{I5}, multiplying $s^2$ on the both sides of \eqref{eq:indentity2} and taking $s \to+\infty $, we can derive that
	\begin{equation}\label{eq:s to inf2}
		(q_2-q_1)\bmf {u}_2^-(\bmf 0)(e^{-2{\bsi}\theta_M}-e^{-2{\bsi}\theta_m})\cdot \left( \begin{array}{*{20}{c}}
			1\\
			\bsi
		\end{array} \right)=0.
	\end{equation}
	Due to  $-\pi < \theta_m< \theta_M< \pi  $, it yields that $e^{-2\theta_M{\bsi}}-e^{-2\theta_m{\bsi}} \ne 0$. By virtue of \eqref{eq:condition1}, it is easy to derive that $\bmf {u}_1^-(\bmf 0)=\bmf {u}_2^-(\bmf 0)\ne \bmf{0}$. Therefore, by virtue of \eqref{eq:s to inf2} and $q_j$ being a real valued constant, we can deduce that $q_1=q_2$.
	
	\medskip
	\noindent	{\bf  Part \ 3}. In this part, we prove \eqref{comment1}. In fact, utilizing \eqref{eq:indentity2},  \eqref{eq:condition1}, the generalized impedance  boundary conditions in \eqref{eq:lame9} and the trace theorem, it indicates that 
	\begin{equation}\label{gamma_h}
		\bmf {u}_1^- = \bmf {u}_1^+ =\bmf {u}_2^+ =\bmf {u}_2^-,\ T_\nu \bmf {u}_1^+ =T_\nu \bmf {u}_2^+,\ T_\nu\bmf {u}_1^- =T_\nu \bmf {u}_2^- ,\ \mbox{on}\ \Gamma _h^\pm ,
	\end{equation}
	Moreover, by using  $q_1=q_2$ and \eqref{gamma_h}, we can deduce that
	\begin{equation}\begin{cases}\label{eq:v}
			\mathcal{L}\bmf{v} + {\omega}^2q_1\bmf {v} = \bmf{0},&\mbox{in}\quad {S_h},\\
			\bmf {v} = T_\nu \bmf {v} = \bmf {0},&\mbox{on}\quad \Gamma _h^\pm, 
	\end{cases} \end{equation}
	where $\bmf {v}:=\bmf {u}_1^- -\bmf {u}_2^-$. By Holmgren's uniqueness principle, we derive that $\bmf {u}_1^- =\bmf {u}_2^- \ \mbox{in} \ S_h$.
	
	The proof is complete.
\end{proof}

\begin{rem}  
	To obtain uniquely identifiable outcomes for an elastic scatterer with piecewise polygonal-nest or polygonal-cell structures associated with the scattering problem \eqref{eq:syst2} through a single far-field measurement, by contradiction argument, we need to study the singular behaviour of the underlying total wave field around a corner of the underlying elastic scatterer, which can be recast as a PDE system \eqref{eq:lame9}. Therefore, the crucial role of Lemma \ref{lem 5.1} in proving Theorem \ref{Thm4.2} and Theorem \ref{Thm4.3} cannot be overlooked.   
\end{rem}


\section{Main uniqueness results}\label{sec:Main uniqueness results}


In this section, we will use a single far-field measurement to demonstrate the local and global uniqueness result for determining the shape of an elastic scatterer with a polygonal-nest or polygonal-cell configuration under a generic physical scenario. Additionally, a single measurement enables the unique identification of both the piecewise constant density and boundary impedance parameter. Before establishing the uniqueness results of the inverse problems \eqref{eq:ip1}, we shall introduce the admissibility conditions for an elastic scatterer with polygonal-nest or polygonal-cell structures, which will be used in the rest of this paper.

\begin{defn}\label{def:4.1}
	Let $(\Omega;q,\eta)$  be a polygonal-nest or  polygonal-cell  elastic scatterer $(\Omega;q,\eta)$ as described by Definitions \ref{def2.3} and Definition \ref{def2.4}, respectively. The scatterer is  said to be admissible if the following conditions is  met: 
	\begin{itemize}
		\item [1.] Considering   a polygonal-nest elastic scatterer $(\Omega;\lambda,\mu,  q,\eta)$ associated with the polygonal-nest partition $\{ \Pi_\zeta\}_{\zeta=1}^N$, $ \forall \bmf {x_c} \in \partial {\Pi_\zeta}$, $\bmf{u}(\bmf {x_c}) \ne \bmf{0}$, where $\bmf{u}$ is the solution to  \eqref{eq:syst2} corresponding  to $(\Omega;\lambda,\mu,  q,\eta)$.
		\item [2.] Considering a polygonal-cell elastic  scatterer  $(\Omega;\lambda,\mu,  q,\eta)$,  $\bmf {u}(\bmf {x_c}) \ne \bmf{0}$,  $\forall \bmf {x_c} \in \partial \Omega $,  where where $\bmf{u}$ is the solution to  \eqref{eq:syst2} corresponding  to $(\Omega;q,\eta)$.  
	\end{itemize}
	
\end{defn}

The non-void admissible condition described in Definition \ref{def:4.1} was applied in \cite{DLS21} and can be fulfilled in various generic situations.  For instance, when the angular frequency $\omega$ is considerably smaller than the diameter of the scatterer $\Omega$, i.e., 
\begin{align}\label{eq:ll}
	\omega \cdot {\rm diam}(\Omega) \ll 1.
\end{align}
Therefore, as the scatterer $\Omega$ is significantly smaller than the operating wavelength, it can be assumed that the total wave field is primarily due to the incident wave $\bmf{u}^i$, while the influence of the corresponding scattering wave $\bmf{u}^{sc}$ is negligible from a physical standpoint. For the incident wave $\bmf u^i$, we can choose either a compressional plane wave $\mathbf d \exp(\mathrm i k_p \mathbf x \cdot \mathbf d)$ or a shear plane wave $\mathbf d^\perp \exp(\mathrm i k_s \mathbf x \cdot \mathbf d)$, where $\bmf d$ and $\bmf d^\perp$ are both elements of $\mathbb S^1$ and $\bmf d\cdot \bmf d^\perp=0$,  it is evident that $\bmf u^i$ is non-vanishing throughout. Consequently, when \eqref{eq:ll} is fulfilled, $\bmf u=\bmf u^i+\bmf u^{sc}$ cannot vanish everywhere. Indeed, we can rigorously demonstrate from Remark \ref{rem:21} that if \eqref{eq:ll} is satisfied and the generalized transmission boundary parameter $\eta$ is sufficiently small, the energy of the scattered wave $\bmf u^{sc}$ of \eqref{eq:syst2} is negligible. This ensures that the total wave field will not vanish throughout.  On other hand, from the physical intuition, when elastic scattering occurs, if the total field of the elastic wave is zero at a certain point, it means that the scattered wave and the incident wave cancel each other at that point. In usual elastic scattering, such a cancelation phenomenon as described above is rare. We will not go into this matter in depth in this paper.


In Theorem \ref{Thm4.1}, we consider an elastic scatterer $\Omega$ that is in the form of a polygonal-nest or polygonal-cell. We aim to present a local uniqueness result for the shape determination of $\Omega$. If two scatterers, $\Omega_1$ and $\Omega_2$, have identical far field measurements under a fixed incident wave, we will prove that the difference between these two scatterers cannot contain a corner. The proof will rely on the contraction, Rellich theorem and Lemma \ref{lem 3.3}. By observing the convexity of a scatterer with a polygonal-nest structure,  we can further prove a global determination of the boundary of $\mathbb R^2 \backslash \Omega$ by a single measurement.

\begin{thm}\label{Thm4.1}
	Consider the  elastic scattering problem \eqref{eq:syst2} associated with two admissible polygonal-nest or polygonal-cell elastic scatterers $(\Omega _j;\lambda,\mu,  q_j,\eta _j)$ ($ j=1,2$) in $\mathbb{R}^2 $. Recall that $\bmf {u}_t ^{j,\infty }(\hat {\bmf x};{\bmf u^i})$ is the far-field pattern associated with the incident field ${\bmf u^i}$ in the form of \eqref{eq:ui} and the scatterers $(\Omega_j; \lambda,\mu,  q_j,\eta_j)$. Assume that 
	\begin{equation}\label{eq:u_1=2^inf}
		\bmf {u}_t ^{1,\infty }(\hat {\bmf x};{\bmf u^i})=\bmf {u}_t ^{2,\infty }(\hat {\bmf x};{\bmf u^i}),\quad \forall \hat {\bmf x} \in \mathbb{S}^1,
	\end{equation}
	where $\bmf u^i $ is a fixed incident wave. Then 
	\begin{equation}\label{omega1-2}
		\Omega_1\Delta\Omega_2:=(\Omega_1\backslash\Omega_2) \cup (\Omega_2\backslash\Omega_1).
	\end{equation}
	cannot contain a sectorial corner. 
	Moreover, suppose that $\Omega_1$ and $\Omega_2$ are two admissible polygonal-nest elastic  scatterers, we can conclude that
	\begin{equation}\label{partial1=2}
		\partial \Omega_1=\partial \Omega_2.
	\end{equation}
\end{thm}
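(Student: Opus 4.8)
The plan is to establish both claims by contradiction, letting the local statement carry the analytic content and deducing the global statement from convexity of the nest.

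I would first convert the far-field coincidence \eqref{eq:u_1=2^inf} into an interior identity. Let $G$ denote the unbounded connected component of $\mathbb{R}^2\setminus(\overline{\Omega_1}\cup\overline{\Omega_2})$. By the one-to-one correspondence between the far-field pattern and the scattered field, which rests on the elastic Rellich theorem (Lemma \ref{Rellich}) together with unique continuation, the equality \eqref{eq:u_1=2^inf} forces $\bmf{u}^{1,sc}=\bmf{u}^{2,sc}$ in $G$; since the incident field $\bmf{u}^i$ is common to both scatterers, this yields $\bmf{u}^1=\bmf{u}^2=:\bmf{u}$ throughout $G$.

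For the local statement, suppose toward a contradiction that $\Omega_1\Delta\Omega_2$ in \eqref{omega1-2} contains a sectorial corner, and reduce to a corner exposed to $G$: choose a vertex $\bmf{x}_c$ of, say, $\partial\Omega_1$ admitting a small disk $B_h$ centred at $\bmf{x}_c$ with $B_h\cap\overline{\Omega_2}=\emptyset$, so that $S_h:=\Omega_1\cap B_h$ is a convex sector of medium $q_1$ while $B_h\setminus\overline{\Omega_1}\subset G$ carries the background medium. The crux is to recognize the pair $(\bmf{v},\bmf{w})=(\bmf{u}^2|_{S_h},\bmf{u}^1|_{S_h})$ as a solution of the transmission system \eqref{eq:lame5} with $\eta=\eta_1$: here $\bmf{w}$ solves the $q_1$-equation and $\bmf{v}$ solves the background equation in $S_h$ (as $S_h$ lies in the exterior of $\Omega_2$), while across $\Gamma_h^\pm$ the generalized transmission conditions of $(\Omega_1;\lambda,\mu,q_1,\eta_1)$, combined with $\bmf{u}^1=\bmf{u}^2$ on the $G$-side and the real-analyticity of $\bmf{u}^2$ across $\Gamma_h^\pm$ (which lies in the interior of $\mathbb{R}^2\setminus\overline{\Omega_2}$, so its trace is unambiguous), give exactly $\bmf{w}=\bmf{v}$ and $T_\nu\bmf{v}+\eta_1\bmf{v}=T_\nu\bmf{w}$. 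The Hölder regularity $\bmf{w}\in C^\alpha(\overline{S_h})$ demanded by Lemma \ref{lem 3.3} is supplied by Lemma \ref{lem 3.4}, and a convex-polygon vertex has opening angle in $(0,\pi)$, satisfying \eqref{angle}. Applying Lemma \ref{lem 3.3} then gives $\bmf{u}^1(\bmf{x}_c)=\bmf{v}(\bmf 0)=\bmf{w}(\bmf 0)=\bmf 0$, contradicting the admissibility of $(\Omega_1;\lambda,\mu,q_1,\eta_1)$ in Definition \ref{def:4.1}. Hence $\Omega_1\Delta\Omega_2$ contains no sectorial corner.

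For the global statement \eqref{partial1=2}, I would invoke the convexity built into Definition \ref{def2.1}: the outermost layers $\Omega_1$ and $\Omega_2$ are convex polygons. Suppose $\partial\Omega_1\neq\partial\Omega_2$, i.e. $\Omega_1\neq\Omega_2$. If every vertex of $\Omega_1$ lay in $\overline{\Omega_2}$ and every vertex of $\Omega_2$ in $\overline{\Omega_1}$, then passing to convex hulls would force $\overline{\Omega_1}=\overline{\Omega_2}$; thus some vertex $\bmf{x}_c$ of one polygon, say $\Omega_1$, lies outside $\overline{\Omega_2}$. A sufficiently small $B_h$ about $\bmf{x}_c$ avoids $\overline{\Omega_2}$, producing precisely an exposed sectorial corner of $\Omega_1\Delta\Omega_2$ of the clean type handled above, which contradicts the local statement just proved. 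Therefore $\Omega_1=\Omega_2$, giving \eqref{partial1=2}. I expect the main obstacle to be the bookkeeping in the local step, namely verifying that the exposed corner genuinely reduces to a medium-to-background problem matching \eqref{eq:lame5} (the analyticity of $\bmf{u}^2$ across $\Gamma_h^\pm$ and the correct identification of the impedance $\eta_1$), together with the geometric selection of a corner on $\partial G$ whose disk avoids the other scatterer; once this reduction is secured, Lemmas \ref{lem 3.3}--\ref{lem 3.4} and admissibility close the argument.
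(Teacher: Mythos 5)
Your proposal is correct and follows essentially the same route as the paper's proof: Rellich's theorem plus unique continuation to identify the total fields outside both scatterers, reduction to a corner of one scatterer exposed to the background region of the other, identification of the pair of fields as a solution of the transmission system \eqref{eq:lame5}, H\"older regularity from Lemma \ref{lem 3.4}, vanishing at the vertex from Lemma \ref{lem 3.3}, and a contradiction with the admissibility condition, with convexity handling the global claim \eqref{partial1=2}. The only differences are cosmetic (you place the exposed corner on $\partial\Omega_1$ where the paper places it on $\partial\Omega_2$, and your convex-hull justification of the global step is slightly more explicit than the paper's).
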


\begin{proof} 
The first conclusion of this theorem is proved by contradiction. In the following, we suppose that there is a corner contained within $\Omega_1\Delta\Omega_2$. For a polygonal-nest or a polygonal-cell elastic scatterer, the generalized transmission conditions are introduced in (\ref{eq:trans con}) and (\ref{eq:trans con2}), respectively. It is evident that these two generalized transmission conditions for polygonal-cell and polygonal-nest elastic scatterer are coherent.  Therefore, it is only necessary to prove the case for the polygonal-nest structure, and the polygonal-cell case can be demonstrated in a similar way. 
	
	Following the notation introduced in \eqref{eq:scatter1}, let two polygonal-nest elastic scatterers be written as	
	\begin{equation} \label{eq:scatter1 proof}
		(\Omega_j ;\lambda,\mu,  q_j,\eta_j )=\bigcup\limits_{\zeta = 1}^{N_j} (\mathcal{H}_{\zeta,j};q_{\zeta,j},\eta _{\zeta,j}) , 
	\end{equation}
	where
	\begin{equation}\label{eq:scatter2 proof} 
		\Omega_j=\bigcup_{\zeta = 1}^{N_j} \mathcal{H}_{\zeta,j} ,\ q=\sum_{\zeta=1}^{N_j} q_{\zeta,j} \chi_{\mathcal{H}_{\zeta,j} },\ \eta=\sum_{\zeta=1}^{N_j}  {\eta_{\zeta,j} }\chi_{\partial\Pi_{\zeta,j} }.
	\end{equation}
	Assume that $\Omega _1\Delta \Omega _2$ possesses a corner. Since $\mathcal L$ is invariant  under rigid motion, without loss of generality, we may suppose that the vertex $ O $ of the corner $\Omega_2 \cap W$ satisfies that $O \in \partial \Omega _2$ and $O \notin \partial \overline \Omega_1 $, where $ O $ is the origin.  Moreover we assume that the underlying corner satisfies that $S_h \subseteq \Pi_{1,2}$, where $\Pi_{1,2}$ is defined in  \eqref{eq:scatter2 proof}.

Due to $\bmf {u}_t ^{1,\infty}(\hat {\bmf x};{\bmf u^i}) = \bmf {u}_t ^{2,\infty}(\hat {\bmf x};{\bmf u^i}),\ \forall \hat {\bmf x} \in \mathbb{S}^1$, it is easy to deduce that  ${\bmf u}_1^{sc} = {\bmf u}_2^{sc}$ in $\mathbb{R}^2\backslash (\overline \Omega_1\cup \overline \Omega_2)$ by making use of Rellich Theorem \cite{Hahner98}.  Furthermore, it yields that
	\begin{equation}\label{eq:u1=2}
		\bmf {u}_1(\bmf x)=\bmf {u}_2(\bmf x),\ \forall \bmf {x} \in \mathbb{R}^2\backslash (\overline \Omega_1\cup \overline \Omega_2).
	\end{equation}
	With the help of the generalized transmission condition and the notations in \eqref{eq:sign2}, one has that 
	\begin{equation}\label{eq:trans con1}
		\bmf {u}_2^- =\bmf {u}_2^+ =\bmf {u}_1^+,\quad T_\nu \bmf {u}_2^- =T_\nu \bmf {u}_2^+ +\eta _{1,2}\bmf {u}_2^+ =T_\nu \bmf {u}_1^+ +\eta_{1,2}\bmf {u}_1^+,\quad \mbox{on}\ \Gamma_h^\pm,
	\end{equation}
	where the superscripts $(\cdot )^- ,(\cdot )^+$ stand for the limits taken from $\Pi_{1,2} \Subset \Omega _2$ and $\mathbb{R}^2\backslash \overline \Omega _2$ respectively. Furthermore, we may assume that the neighbourhood $B_h(O)$ is sufficiently small such that 
	\begin{equation*}\begin{cases}
			\mathcal{L}\bmf {u}_1^+ +{\omega}^2\bmf {u}_1^+ =\bmf{0},\ &\mbox{in}\quad B_h,\\
			\mathcal{L}\bmf {u}_2^- +{\omega}^2q_{1,2}\bmf {u}_2^- =\bmf{0},\ &\mbox{in}\quad S_h\Subset \Pi_{2,1},
	\end{cases}\end{equation*}
	where $q_{1,2}$ is defined in \eqref{eq:scatter2 proof}. It is obvious that $\bmf {u}_1^+\in H^2(S_h)^2 $ and $\bmf {u}_2^-\in H^1(S_h)^2$. From Lemma \ref{lem 3.4}, one has $\bmf {u}_2^- \in C^\alpha (\overline {S_h})^2$. Applying Lemma \ref{lem 3.3} and utilizing the continuity of  $\bmf {u_1}$ at the vertex $O$, one has $\bmf {u}_1(\bmf 0)=\bmf 0$, which contradicts to the admissibility condition in Definition \ref{def:4.1}. Hence we obtain that there is no corner  contained within $\Omega_1\Delta\Omega_2$.
	
	Finally, we prove \eqref{partial1=2} using contradiction. We assume that $ \partial \Omega_1\ne \partial \Omega_2 $. From Definition \ref{def2.1}, it is evident that $ \Omega_1 $ and $ \Omega_2 $ are convex. Therefore, we claim that the boundary of $ \mathbb{R}^2\backslash (\overline \Omega_1\cup \overline \Omega_2)$ has a corner, which contradicts the first conclusion of this theorem. 
	
	The proof is complete.
\end{proof}

For the polygonal-cell elastic  scatterer $\Omega$, in Theorem \ref{Thm4.2}, we will demonstrate that the physical parameters of density and boundary impedance parameter of $\Omega$, which are piecewise constant, can be determined with a single far field measurement, assuming a-prior knowledge of the shape and partition of $\Omega$. Indeed, we will use the transmission PDE system and Lemma \ref{lem 5.1} to prove the global recovery result for piecewise constant physical parameters.

\begin{thm}\label{Thm4.2}
	Consider the elastic  scattering problems \eqref{eq:syst2} associated with two admissible polygonal-cell elastic  scatterers $(\Omega;\lambda,\mu,  q_j,\eta_j)$ in $ \mathbb{R}^2$ ($j=1,2 $). Suppose that the physical parameter $q_j$ and $\eta_j$ associated with $\Omega_j$ has a common polygonal-cell partition $ \{\Pi_{\zeta}\} _{\zeta=1}^{N} $  described by Definition \ref{def2.2}. Let $ q_j $ and $ \eta_j $ be defined as follows:    
	\begin{equation}\label{eq:q tea_j}
		q_j = \sum_{\zeta=1}^N q_{\zeta,j}\chi_{\Pi_{\zeta}},\ \eta_j =\sum_{\zeta=1}^N \eta_j^*\chi_{\partial \Pi_{\zeta}}.
	\end{equation}
	Let $ \bmf {u}_t^{j,\infty }(\hat{\bmf x};{\bmf u^i}) $ be the far-field pattern associated with the incident field $\bmf u^i $ and the scatterer  $ (\Omega;q_j,\eta _j) $.
	Assume that 
	\begin{equation}\label{eq:u_beta inf}
		\bmf {u}_t^{1,\infty}(\hat{\bmf x};{\bmf u^i})=\bmf {u}_t^{2,\infty}(\hat{\bmf x};{\bmf u^i}),\quad \forall \hat {\bmf x} \in \mathbb{S}^1,
	\end{equation}
	where $ \mathbf u^i $ is a fixed incident wave in the form of \eqref{eq:ui}. Then one has $ q_1=q_2 $ and $ \eta_1=\eta_2 $.
\end{thm}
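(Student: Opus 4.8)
The plan is to reduce the global recovery of the piecewise-constant parameters to a cell-by-cell invocation of Lemma~\ref{lem 5.1}, exploiting the defining feature of the polygonal-cell structure that \emph{every} cell touches the outer boundary $\partial\Omega$ at a corner, so that no propagation of Cauchy data through interior interfaces is required. First I would argue exactly as in the proof of Theorem~\ref{Thm4.1}: the coincidence of the far-field patterns \eqref{eq:u_beta inf}, combined with the Rellich-type uniqueness \cite{Hahner98}, gives $\bmf u_1^{sc}=\bmf u_2^{sc}$ and hence
$$
\bmf u_1(\bmf x)=\bmf u_2(\bmf x),\quad \forall\, \bmf x\in\mathbb R^2\backslash\overline{\Omega},
$$
where $\bmf u_j$ is the total field associated with $(\Omega;\lambda,\mu,q_j,\eta_j)$. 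Since the two scatterers share the common partition $\{\Pi_\zeta\}_{\zeta=1}^N$, no geometric information has to be recovered, and it only remains to match the constants $q_{\zeta,j}$ and $\eta_j^*$.

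Next I would fix an arbitrary cell $\Pi_\zeta$ and use condition~3 of Definition~\ref{def2.2}, which furnishes a vertex $\bmf x_{c,\zeta}$ whose two adjacent edges $\Gamma_\zeta^\pm$ lie on $\partial\Omega$. After a rigid motion (legitimate since $\mathcal L$ is invariant under it) I place $\bmf x_{c,\zeta}$ at the origin and pick $h>0$ small enough that $S_h=\Pi_\zeta\cap B_h$, $\Gamma_h^\pm\subset\partial\Omega$ and $B_h\backslash\overline{S_h}\subset\mathbb R^2\backslash\overline{\Omega}$. Near this corner the two configurations realize precisely the transmission system \eqref{eq:lame9}: the interior densities are $q_{\zeta,1},q_{\zeta,2}$, the common exterior density is the background value $1$, and the impedances carried by \eqref{eq:trans con2} across $\Gamma_h^\pm$ are $\eta_1^*,\eta_2^*$. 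I would then check that all hypotheses of Lemma~\ref{lem 5.1} hold: the exterior coincidence $\bmf u_1^+=\bmf u_2^+$ in $B_h\backslash\overline{S_h}$ is the displayed identity above; the nondegeneracy $\bmf u_2(\bmf 0)=\bmf u_2(\bmf x_{c,\zeta})\neq\bmf 0$ is the admissibility condition~2 in Definition~\ref{def:4.1}; and the H\"older regularity $\bmf u_j^-\in C^\alpha(\overline{S_h})^2$ is supplied by Lemma~\ref{lem 3.4}. Lemma~\ref{lem 5.1} then yields $\eta_1^*=\eta_2^*$ together with $q_{\zeta,1}=q_{\zeta,2}$.

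Finally, since $\zeta$ was arbitrary, running this step over $\zeta=1,\dots,N$ gives $q_{\zeta,1}=q_{\zeta,2}$ for every $\zeta$, hence $q_1=q_2$ through \eqref{eq:q tea_j}, while any single cell already delivers $\eta_1^*=\eta_2^*$ and so $\eta_1=\eta_2$. The only genuine bookkeeping obstacle I anticipate is verifying that each outer corner reproduces the exact local configuration of \eqref{eq:lame9}---in particular that the side of $\Gamma_h^\pm$ opposite $\Pi_\zeta$ is the unbounded region on which the Rellich identity is valid---because all of the delicate microlocal/CGO analysis has already been absorbed into Lemma~\ref{lem 5.1}. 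The structural point that makes the polygonal-cell case markedly easier than the nest case is precisely that Definition~\ref{def2.2} guarantees direct corner access to every cell.
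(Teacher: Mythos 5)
Your proposal is correct and follows essentially the same route as the paper: Rellich's theorem gives $\bmf u_1^+=\bmf u_2^+$ outside $\Omega$, condition~3 of Definition~\ref{def2.2} supplies a boundary corner for each cell, and Lemma~\ref{lem 5.1} (with the admissibility condition and Lemma~\ref{lem 3.4} verifying its hypotheses) delivers $q_{\zeta,1}=q_{\zeta,2}$ and $\eta_1^*=\eta_2^*$. The only cosmetic difference is that the paper phrases the argument as a contradiction over a single offending cell $\Pi_{\zeta_0}$, whereas you run the same local argument directly over every cell; the two are logically equivalent.
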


\begin{proof}

According to the statement of this theorem, the shape of $\Omega$ and its corresponding polygonal-cell partitions $\{\Pi_{\zeta}\} _{\zeta=1}^{N}$ are already known in advance. We establish the uniqueness in determining the physical parameters of density and boundary impedance parameter by contradiction. Let us assume that $\eta_1^*\ne \eta_2^*$, or if there exists an index $1 \le \zeta_0 \le N$ such that $q_{\zeta_0,1} \ne q_{\zeta_0,2}$. From Definition \ref{def2.2}, there exists a vertex $\bmf{x_c}$ in $\Pi_{\zeta_0}$ which is formed by two adjacent edges $\Gamma^\pm$ of $\Omega$. Without loss of generality, it can be assumed that $\bmf{x_c}$ coincides with the origin. Let $h>0$ be sufficiently small so that $S_h=\Omega\cap{B_h}(\bmf{0})$, where $\Gamma^\pm=\partial\Omega\cap{B_h}(\bmf{0})$ satisfying ${\Gamma^\pm}\subset\partial\bmf{G}$. Here, $\bmf{G}={\mathbb R^2}\backslash\overline\Omega$. Under this geometrical setup, we assume that $S_h$ and $\Gamma_h^\pm $ are same as \eqref{eq:sign1}; see Figure \ref{fig:picture3} for illustration.

	Recall that $ \bmf {u}_1 $ and $ \bmf {u}_2$ be the total wave fields associated with the admissible polygonal-cell  elastic  scatterers $(\Omega_j;q_j,{\eta _j}),j=1,2 $. Due to $ \bmf {u}_t ^{1,\infty}(\hat {\bmf x};\bmf {u}^i) = \bmf {u}_t ^{2,\infty }(\hat {\bmf x};\bmf {u}^i),  \forall  \hat {\bmf x} \in \mathbb{S}^1 $, with the help of Rellich Theorem \cite{Hahner98} and the unique continuation principle, it yields that
	\begin{equation}\label{u_1^+=u_2^+}
		\bmf {u}_1^+ =\bmf {u}_2^+\quad \mbox{in}\quad \ B_h \backslash \overline {S_h}  \subset \bmf G.
	\end{equation}
	Furthermore, by using the generalized impedance  transmission boundary conditions in \eqref{eq:syst2}, one has
	\begin{equation}\begin{cases}\label{boundary conditions}
			\bmf {u}_1^+ =\bmf {u}_1^-,\ T_\nu\bmf {u}_1^+ +\eta_1^*\bmf {u}_1^+ =T_\nu \bmf {u}_1^-,\quad \mbox{on}\quad \Gamma_h^ \pm, \\
			\bmf {u}_2^+ =\bmf {u}_2^-,\ T_\nu\bmf {u}_2^+ +\eta _2^*\bmf {u}_2^+=T_\nu \bmf {u}_2^-,\quad \mbox{on}\quad \Gamma_h^ \pm. 
	\end{cases}\end{equation}
	Then, it indicates that the following system are satisfied:
	\begin{equation}\begin{cases}\label{eq:system}
			\mathcal{L}\bmf {u}_1^- +{\omega}^2q_{\zeta_0,1}\bmf {u}_1^- =\bmf 0,\ &
			\quad	\mbox{in}\quad S_h,\\
			\mathcal{L}\bmf {u}_1^+ +{\omega}^2\bmf {u}_1^+ =\bmf 0,\ &\quad \mbox{in}\quad  B_h\backslash \overline {S_h}, \\
			\mathcal{L}\bmf {u}_2^- +{\omega}^2q_{\zeta_0,2}\bmf {u}_2^- =\bmf 0,\ &\quad \mbox{in}\quad S_h,\\
			\mathcal{L}\bmf {u}_2^+ +{\omega}^2\bmf {u}_2^+ =\bmf 0,\ & \quad \mbox{in}\quad {B_h}\backslash \overline {S_h}.
	\end{cases}\end{equation}
	Since  $ \bmf {u}_1^+,\bmf {u}_2^+\in H^2(B_h\backslash \overline {S_h} )^2$ and $ \Omega $ is an admissible polygonal-cell  elastic  scatterer, we can deduce that $ \bmf {u}_1^+(\bmf 0)=\bmf {u}_2^+(\bmf 0) \ne \bmf 0 $. By virtue of \eqref{u_1^+=u_2^+}, \eqref{boundary conditions}, \eqref{eq:system} and Lemma \ref{lem 5.1}, one has  
	\begin{equation*}
		q_{\zeta_0,1} = q_{ \zeta_0,2}\quad \mbox{and}\quad \eta_1^*=\eta_2^*,
	\end{equation*}
	where we get the contradiction. 
	
	The proof is complete.
\end{proof}

\begin{figure}
	\centering
	\includegraphics[width=0.5\linewidth]{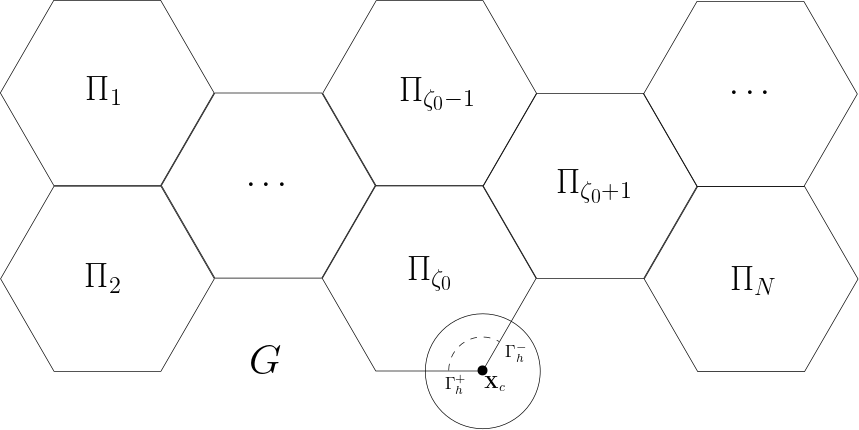}
	\caption{The schematic illustration of the geometrical setup in the proof of Theorem \ref{Thm4.2}. }
	\label{fig:picture3}
\end{figure}

In the following theorem, we consider the global uniqueness result for the identification of the shape of a polygonal-nest elastic scatterer $\Omega$ by a single far-field measurement. Specifically, the outer boundary, internal polygonal-nest pattern, and the corresponding constant physical parameters of the density and  boundary impedance parameter can be uniquely determined by a single measurement. Compared to Theorem \ref{Thm4.2}, the polygonal-nest structure eliminates the a-prior knowledge of the shape and polygonal-nest partition of $\Omega$.

\begin{thm}\label{Thm4.3}
	Consider the elastic scattering problems \eqref{eq:syst2} associated with two admissible polygonal-nest elastic  scatterers $(\Omega;q_j,\eta _j)$ in $ \mathbb{R}^2$ ($j=1,2 $), where the associated polygonal-nest partitions $ \{\Pi_{\zeta,1}\}_{\zeta= 1}^{N_1} $ and $ \{\Pi_{\zeta,2}\} _{\zeta=1}^{N_2} $ are described by Definition \ref{def2.1} with $ \Omega_1 =\bigcup_{\zeta=1}^{N_1} {\mathcal{H}_{\zeta,1}}$  and $ {\Omega _2}=\bigcup\nolimits_{ \zeta=1}^{N_2} \mathcal{H}_{\zeta,2}$.  Here $\mathcal{H}_{\zeta,j}=\Pi_{\zeta,j}\backslash \overline \Pi_{\zeta+1,j} , \zeta=1, \cdots,N_j$. For $j=1,2 $, the physical  parameters $ q_j $ and $ \eta_j $ can be characterized as follows:  
	\begin{equation}\label{q tea j}
		q_j=\sum_{\zeta=1}^{N_j} q_{\zeta,j}\chi_{\mathcal{H}_{\zeta,j}},\ \eta_j=\sum_{\zeta = 1}^{N_j} \eta _{\zeta,j}\chi_{\partial \Pi_{\zeta,j}}.
	\end{equation}
	Let $ \bmf {u}_t^{j,\infty }(\hat {\bmf x};{\bmf u^i}) $ be the far-field pattern associated with the incident field $\bmf u^i $ and the scatterer  $ (\Omega;q_j,\eta _j) $. Assume that 
	\begin{equation}\label{u_beta^1=2}
		\bmf {u}_t ^{1,\infty}(\hat {\bmf x};{\bmf u^i})=\bmf {u}_t^{2,\infty }(\hat {\bmf x};{\bmf u^i}),\ \hat {\bmf x} \in \mathbb{S}^1,
	\end{equation}
	where $\mathbf  u^i $ is a fixed incident wave in the form of \eqref{eq:ui}. Then $ N_1=N_2=N,\ \partial \Pi_{\zeta,1}=\partial \Pi_{\zeta,2} $ for $ \zeta=1, \cdots,N $, $q_1=q_2 $ and $ \eta_1=\eta_2 $.
\end{thm}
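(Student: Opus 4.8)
The plan is to peel the two nests layer by layer from the outside inwards, at each stage first matching the two constant parameters of the current layer and then locating the next interface, and finally comparing the numbers of layers. Write $\bmf u_j$ for the total field of $(\Omega_j;q_j,\eta_j)$ and recall that $\mathcal H_{\zeta,j}=\Pi_{\zeta,j}\backslash\overline{\Pi_{\zeta+1,j}}$ carries the constant density $q_{\zeta,j}$, with $\eta_{\zeta,j}$ supported on $\partial\Pi_{\zeta,j}$. As a preliminary step I would use \eqref{u_beta^1=2}, the Rellich theorem \cite{Hahner98} and unique continuation to get $\bmf u_1=\bmf u_2$ in $\mathbb R^2\backslash(\overline{\Omega_1}\cup\overline{\Omega_2})$, and then invoke Theorem \ref{Thm4.1} --- whose hypotheses are met because each $\Pi_{1,j}=\Omega_j$ is convex --- to conclude $\partial\Pi_{1,1}=\partial\Pi_{1,2}=:\partial\Pi_1$. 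This is the base case of an induction on the level $k$.

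For the inductive hypothesis $\mathrm{IH}(k)$ I would assume that $\partial\Pi_{\zeta,1}=\partial\Pi_{\zeta,2}=:\partial\Pi_\zeta$ for all $\zeta\le k$, that $q_{\zeta,1}=q_{\zeta,2}$ and $\eta_{\zeta,1}=\eta_{\zeta,2}$ for $\zeta\le k-1$, and that $\bmf u_1=\bmf u_2$ on $\mathbb R^2\backslash\overline{\Pi_k}$. The first half of the step matches the level-$k$ parameters: choosing a vertex of the common convex polygon $\Pi_k$ and a small sector $S_h\subset\mathcal H_{k,1}\cap\mathcal H_{k,2}$ with $B_h\backslash\overline{S_h}\subset\mathbb R^2\backslash\overline{\Pi_k}$, the data $\bmf u_1^+=\bmf u_2^+$ coincide on the exterior part by $\mathrm{IH}(k)$ and $\bmf u_2(\bmf 0)\neq\bmf 0$ by the admissibility in Definition \ref{def:4.1}, so Lemma \ref{lem 5.1} applies and yields $q_{k,1}=q_{k,2}$, $\eta_{k,1}=\eta_{k,2}$ together with $\bmf u_1^-=\bmf u_2^-$ in $S_h$. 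Since both fields then solve the same constant-coefficient Lam\'e system with density $q_{k,1}$ on $\Pi_k\backslash(\overline{\Pi_{k+1,1}}\cup\overline{\Pi_{k+1,2}})$, unique continuation propagates this to $\bmf u_1=\bmf u_2$ on that whole region.

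The second half locates $\partial\Pi_{k+1}$ by a Theorem \ref{Thm4.1}--type contradiction. If $\partial\Pi_{k+1,1}\neq\partial\Pi_{k+1,2}$, then, both polygons being convex, their symmetric difference has a sectorial corner whose vertex $O$ may be taken on $\partial\Pi_{k+1,2}$ but outside $\overline{\Pi_{k+1,1}}$; shrinking $h$, the disk $B_h(O)$ is interior to the single layer $\mathcal H_{k,1}$, so $\bmf u_1$ is real analytic there, while the two edges of $\Pi_{k+1,2}$ at $O$ are genuine interfaces for $\bmf u_2$. Using $\bmf u_1=\bmf u_2$ on the exterior side and the transmission relations of $\bmf u_2$, the pair $(\bmf u_1,\bmf u_2^-)$ satisfies the corner problem \eqref{eq:lame5}; the H\"older regularity of $\bmf u_2^-$ needed there comes from Lemma \ref{lem 3.4}, the opening angle obeys \eqref{angle} because $\Pi_{k+1,2}$ is convex, and Lemma \ref{lem 3.3} then forces $\bmf u_2(O)=\bmf 0$, contradicting admissibility since $O\in\partial\Pi_{k+1,2}$. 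Hence $\partial\Pi_{k+1,1}=\partial\Pi_{k+1,2}$, and combining with the exterior equality gives $\bmf u_1=\bmf u_2$ on $\mathbb R^2\backslash\overline{\Pi_{k+1}}$, i.e. $\mathrm{IH}(k+1)$. The layer count is settled by the same corner argument: if $N_1\neq N_2$, a vertex of the surplus polygon of the deeper nest sits interior to the innermost layer of the shallower one and again produces a vanishing of $\bmf u$ at an admissible point, so $N_1=N_2=N$; one last application of Lemma \ref{lem 5.1} at a vertex of $\Pi_N$ gives the innermost $q_N$ and $\eta_N$.

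The main obstacle is the second half of the inductive step. In Theorem \ref{Thm4.1} the medium surrounding the corner is the vacuum background, whereas here it is the layer $\mathcal H_k$ of constant density $q_{k,1}$; I therefore need Lemma \ref{lem 3.3} in the form where the exterior equation reads $\mathcal L\bmf v+\omega^2q_{k,1}\bmf v=\bmf 0$ instead of $\mathcal L\bmf v+\omega^2\bmf v=\bmf 0$. This is not automatic and must be checked, but the microlocal corner analysis of \cite{DLS21} is carried out for transmission problems with two nonzero piecewise-constant densities, so the vertex vanishing should survive as long as the non-degeneracy encoded in \eqref{angle} persists. A secondary but necessary point is the connectedness bookkeeping behind each unique-continuation propagation: before the interface at level $k+1$ is resolved the two inner polygons may differ, so one must verify that the set carrying the common density $q_{k,1}$ is connected in order to transport $\bmf u_1=\bmf u_2$ across the entire layer.
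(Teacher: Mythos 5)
Your proposal follows essentially the same route as the paper: an induction that peels the nest from the outside, using Rellich plus unique continuation and Theorem \ref{Thm4.1} for the outer boundary, Lemma \ref{lem 5.1} at a convex corner to match $q_{\zeta,1}=q_{\zeta,2}$, $\eta_{\zeta,1}=\eta_{\zeta,2}$ and to propagate $\bmf u_1=\bmf u_2$ into the next layer, the corner-vanishing Lemmas \ref{lem 3.3}--\ref{lem 3.4} to identify each inner interface, and the same contradiction at a surplus vertex to force $N_1=N_2$. The two caveats you flag --- that Lemma \ref{lem 3.3} is stated with exterior density $1$ rather than $q_{k,1}$, and the connectedness bookkeeping for unique continuation across a layer before the next interface is resolved --- are genuine points of care, but they are likewise left implicit in the paper's own Steps 2--3, so your argument is faithful to (and in places more explicit than) the published proof.
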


\begin{figure}
	\centering
	\includegraphics[width=0.5\linewidth]{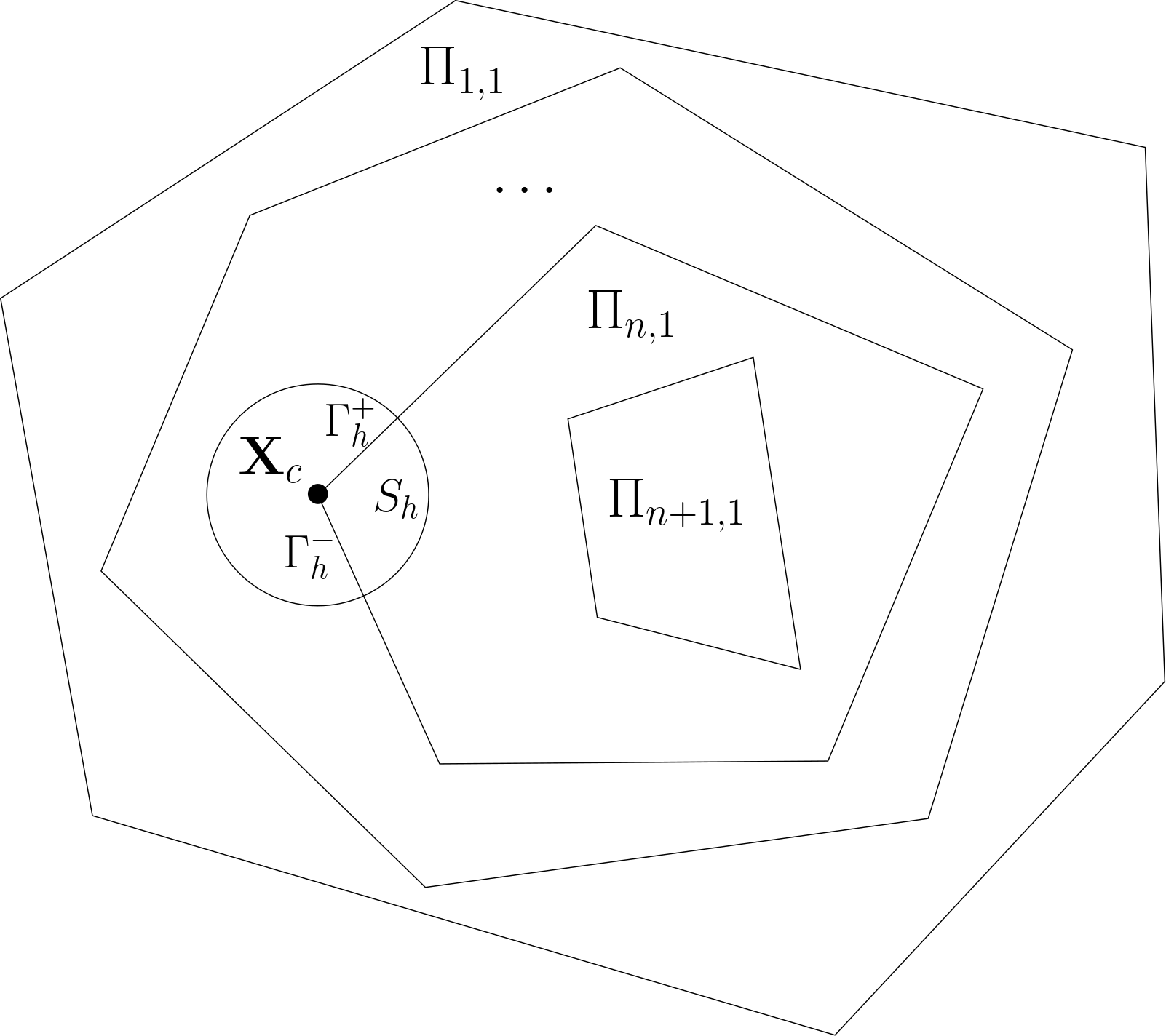}
	\caption{The schematic illustration of the geometrical setup in the proof of Theorem \ref{Thm4.3}}
	\label{fig:picture4}
\end{figure}


\begin{proof}
	We will prove this theorem by mathematical induction. The subsequent proof will be divided into three steps. 
	
	\medskip
	\noindent {\bf Step 1.} When $ \zeta=1 $, using the admissible condition, by Theorem  \ref{Thm4.1}, we have $ \partial \Omega_1= \partial \Omega_2 $, which indicates that $ \partial \Pi_{1,1}=\partial \Pi_{1,2}$. Let $ \bmf {u}_j$  be the total wave fields associated with the admissible polygonal-nest elastic scatterers $(\Omega_j;q_j,\eta_j),j=1,2 $. Since  $ \partial \Omega_1= \partial \Omega_2 $ and $\mathcal L$ is invariant under rigid motion, let $S_h$ be a sectoral corner with the apex $\mathbf 0$ and $h\in \mathbb R_+$ such that $S_h \Subset \Omega_1$, where $S_h$ is defined in \eqref{eq:sign2} with two adjacent edges $\Gamma_h^\pm \in \partial \Omega_1 $.  Due to $ \bmf {u}_t ^{1,\infty }(\hat {\bmf x};{\bmf u^i})=\bmf {u}_t^{2,\infty }(\hat {\bmf x};{\bmf u^i}),$ $ \forall \hat {\bmf x} \in \mathbb{S}^1 $, with the help of  Rellich Theorem \cite{Hahner98} and the unique continuation principle, it yields that
	\begin{equation}\label{eq:u_1^sc=u_2^sc}
		\bmf {u}_1=\bmf {u}_2\quad \mbox{in}\quad B_h \backslash \overline {S_h} \subset \bmf G,\quad \bmf G=\mathbb R^2\backslash \overline{\Omega_1}.
	\end{equation}
	Using the generalized impedance transmission boundary conditions in \eqref{eq:syst2}, one can claim that
	\begin{equation}\begin{cases}\label{eq:trans cond}
			\bmf {u}_1^+=\bmf {u}_{1,1}^-,\quad T_\nu \bmf {u}_{1}^+ +\eta_{1,1}\bmf {u}_{1}^+ =T_\nu \bmf {u}_{1,1}^-,\quad \mbox{on}\quad \Gamma_h^ \pm ,\\
			\bmf {u}_2^+=\bmf {u}_{1,2}^-,\quad T_\nu \bmf {u}_{2}^+ +\eta_{1,2}\bmf {u}_{2}^+ =T_\nu \bmf {u} _{1,2}^-,\quad \mbox{on}\quad \Gamma_h^ \pm,
		\end{cases}
	\end{equation}
	where $\bmf {u}_j^+=\bmf{u}_j \big|_{\mathbb R^2\backslash \Omega_1 }$ and $  \bmf {u}_{1,j}^-=\bmf{u}_j \big|_{\mathcal{H}_{1,j}}$. 
	Therefore one has
	\begin{equation}\begin{cases}\label{eq:sc}
			\mathcal{L}\bmf {u}_{1,1}^- +{\omega}^2q_{1,1}\bmf {u}_{1,1}^- =\bmf 0,\quad &\mbox{in}\quad S_h,\\
			\mathcal{L}\bmf {u}_1^{+}+{\omega}^2 \bmf {u}_1^{+}=\bmf 0,\quad &\mbox{in}\quad B_h\backslash \overline {S_h}, \\
			\mathcal{L}\bmf {u}_{1,2}^- +{\omega}^2q_{1,2}\bmf {u}_{1,2}^- =\bmf 0,\quad &\mbox{in}\quad S_h,\\
			\mathcal{L}\bmf {u}_2^{+}+{\omega}^2 \bmf {u}_2^{+}=\bmf 0,\quad &\mbox{in}\quad B_h\backslash \overline{S_h}. 
	\end{cases}\end{equation}
	Using the fact that $ \Omega $ is an admissible polygonal-nest elastic  scatterer, one has $ {\mathbf u}_1 (\bmf 0)={\mathbf u}_2 (\bmf 0) \neq  \bmf 0 $.  Due to $ \bmf u_1, \bmf u_2\in H^2(B_h\backslash\overline{S_h})^2$, in view of \eqref{eq:u_1^sc=u_2^sc}, \eqref{eq:trans cond} and \eqref{eq:sc}, with the help of Lemma \ref{lem 5.1}, it indicates that  
	\begin{equation*}
		q_{1,1}=q_{1,2}\quad \mbox{and}\quad  \eta_{1,1}=\eta_{1,2}.
	\end{equation*}
	Furthermore, by virtue of \eqref{comment1} in Lemma \ref{lem 5.1}, one has
	\begin{align}
		\bmf {u}_{1,1}^- =\bmf {u}_{1,2}^- \quad \mbox{in} \quad S_h. 
	\end{align}

	\medskip
	\noindent {\bf Step 2.}  Assume that there exits an index $ n \in \mathbb{N}\backslash \{1\}$ such that
	\begin{equation}\label{eq:par1=2}
		\partial \Pi_{\zeta,1}=\partial \Pi_{\zeta,2},\quad q_{\zeta,1}=q_{\zeta,2},\quad \eta_{\zeta,1}=\eta _{\zeta,2},\quad \zeta=2, \ldots,N-1,
	\end{equation}
	where $ \Pi_{\zeta,j}, j=1,2 $,  are described in  the statement of this theorem. According to Lemma \ref{lem 5.1}, by making use of $ \bmf {u}_t^{1,\infty }(\hat {\bmf x};{\bmf u^i})=\bmf {u}_t^{2,\infty}(\hat {\bmf x};{\bmf u^i}) $,  we can recursively obtain that 
	\begin{equation}\label{eq:u_zeta}
		\bmf {u}_{\zeta,1}=\bmf {u}_{\zeta,2}\quad \mbox{in}\quad  {\mathcal{H}_\zeta}=\Pi_ \zeta\backslash\overline {\Pi_{\zeta+1}} ,\  \zeta=1,2,\cdots,N-1,
	\end{equation}
	where $ \bmf u_{ \zeta,1}=\bmf {u}_1{|_{\mathcal{H}_{\zeta,1}}}$ and $ \bmf u_{\zeta,2} = \bmf u_2|_{\mathcal{H}_{\zeta,1}}$. From \eqref{eq:u_zeta}, adopting the similar argument in proving Theorem \ref{Thm4.1}, by Lemma \ref{lem 3.2}, we can prove that $\partial \Pi_{N,1}=\partial \Pi_{N,2}$. 
	
	\medskip
	\noindent	{\bf Step 3.} In the following we prove that 
	\begin{equation}\label{eq:q eta n1}
		q_{N,1}=q_{N,2},\quad \eta_{N,1}=\eta_{N,2}.
	\end{equation}
	Let $ \bmf {x}_c \in \Pi_{N,1} $. Without loss of generality, we may assume that $  \bmf {x_c}= \bmf 0 $. For sufficiently small $ h \in \mathbb R_+$, we assume that $ S_h \Subset \mathcal{H}_{N,1} = \Pi _{N,1}\backslash \overline {\Pi_{N+1,1}} $. The schematic illustration is displayed in Figure \ref{fig:picture4}.  Hence, there holds that
	\begin{equation}\begin{cases}\label{eq:system3}
			\mathcal{L}\bmf {u}_{N,1}^- +{\omega}^2q_{N,1}\bmf {u}_{N,1}^- =\bmf 0,&  \mbox{in}\quad S_h,\\
			\mathcal{L}\bmf {u}_{N-1,1}^+ +{\omega}^2q_{N-1,1}\bmf {u}_{N-1,1}^+ =\bmf 0,&\mbox{in}\quad B_h\backslash \overline {S_h}, \\
			\mathcal{L}\bmf {u} _{N,2}^- +{\omega}^2q_{N,2}\bmf {u}_{N,2}^- =\bmf 0,&\mbox{in}\quad S_h,\\
			\mathcal{L}\bmf {u}_{N-1,2}^+ +{\omega}^2q_{N-1,2}\bmf {u}_{N-1,2}^+ =\bmf 0,&\mbox{in}\quad B_h\backslash \overline {S_h}, \\
			\bmf {u}_{N,1}^- =\bmf {u}_{N-1,1}^+,\ T_\nu \bmf {u}_{N-1,1}^+ +\eta_{N,1}\bmf {u}_{N-1,1}^+ =T_\nu \bmf {u}_{N,1}^-,&\mbox{on}\quad \Gamma _h^\pm, \\
			\bmf {u}_{n,2}^- =\bmf {u}_{N-1,2}^+,\ T_\nu \bmf {u}_{N-1,2}^+ +\eta_{N,1}\bmf {u}_{N-1,2}^+ =T_\nu \bmf {u}_{N,2}^-,&\mbox{on}\quad \Gamma _h^\pm.
	\end{cases}\end{equation}
	According to \eqref{eq:u_zeta}, one has $ \bmf {u}_{n-1,1}^+ =\bmf {u}_{n-1,2}^ +$. In view of \eqref{eq:system3}, with the help of Lemma \ref{lem 5.1} and the admissible condition, one has \eqref{eq:q eta n1}.
	
	Finally, it can be proved by contradiction that $N_1=N_2$. Without loss of generality, let us assume that $N_1>N_2$. Therefore, $\Pi_{N_2,2}$ encompasses a corner of $\Pi_{N_1+1,1}$. Using Lemma \ref{lem 3.3}, we deduce that the total wave field becomes zero at the vertex of this corner. This contradicts the admissibility of the polygonal-nest elastic scatterer.
	
	The proof is complete.
\end{proof}

\section*{Acknowledgment}
The work of H. Diao is supported by National Natural Science Foundation of China  (No. 12371422), the Fundamental Research Funds for the Central Universities, JLU (No. 93Z172023Z01).  The work of H. Liu was supported by the Hong Kong RGC General Research Funds (projects 11311122, 11300821 and 12301420), NSF/RGC Joint Research Fund (project N\_CityU101/21) and the ANR/RGC Joint Research Fund (project A\_CityU203/19).


\end{document}